\documentclass[UTF-8,reqno]{amsart}
\usepackage{enumerate}
\usepackage{mhequ}
\usepackage{amsthm,amsmath,amssymb,url,color, booktabs,nccmath}
\usepackage[left=3.2cm,right=3.2cm,top=4cm,bottom=4cm]{geometry}
\usepackage{mathrsfs}
\usepackage{enumitem,dsfont}
\usepackage{subfigure}
\usepackage[graphicx]{realboxes}
\usepackage{tikz}
\usepackage{color}
\usepackage[colorlinks=true]{hyperref}
\hypersetup{
    linkcolor=blue,          
    citecolor=red,        
    filecolor=blue,      
    urlcolor=cyan
}

\definecolor{darkergreen}{rgb}{0.0, 0.5, 0.0}

\numberwithin{equation}{section}
\def\theequation{\arabic{section}.\arabic{equation}}
\newcommand{\be}{\begin{eqnarray}}
\newcommand{\ee}{\end{eqnarray}}
\newcommand{\ce}{\begin{eqnarray*}}
\newcommand{\de}{\end{eqnarray*}}
\newtheorem{theorem}{Theorem}[section]
\newtheorem{lemma}[theorem]{Lemma}
\newtheorem{proposition}[theorem]{Proposition}
\newtheorem{Examples}[theorem]{Example}
\newtheorem{corollary}[theorem]{Corollary}
\newtheorem{assumption}[theorem]{Assumption}
\newtheorem{definition}[theorem]{Definition}
\theoremstyle{definition}
\newtheorem{remark}[theorem]{Remark}

\DeclareMathOperator{\supp}{supp}

\def\[{{\Big[}}
\def\]{{\Big]}}
\def\<{{\langle}}
\def\>{{\rangle}}
\def\({{\Big(}}
\def\){{\Big)}}

\def\bx{{\mathbf{x}}}

\def\dif{{\mathord{{\rm d}}}}

\def\min{{\mathord{{\rm min}}}}

\def\={&\!\!=\!\!&}

\newcommand{\norm}[1]{\left\|#1\right\|}

\def\mN{{\mathbb N}}

\def\mP{{\mathbb P}}

\def\mR{{\mathbb R}}

\def\mT{{\mathbb T}}

\def\mZ{{\mathbb Z}}

\def\1{{\mathbf{1}}}

\def\geq{\geqslant}
\def\leq{\leqslant}

\def\le{\leqslant}

\def\div{\mathord{{\rm div}}}

\def\[{{\Big[}}
\def\]{{\Big]}}
\def\<{{\langle}}
\def\>{{\rangle}}
\def\({{\Big(}}
\def\){{\Big)}}

\def\bx{{\mathbf{x}}}

\def\dif{{\mathord{{\rm d}}}}

\def\min{{\mathord{{\rm min}}}}

\def\={&\!\!=\!\!&}
\def\bt{\begin{theorem}}
\def\et{\end{theorem}}
\def\bl{\begin{lemma}}
\def\el{\end{lemma}}
\def\br{\begin{remark}}
\def\er{\end{remark}}
\def\bx{\begin{Examples}}
\def\ex{\end{Examples}}
\def\bd{\begin{definition}}
\def\ed{\end{definition}}
\def\bp{\begin{proposition}}
\def\ep{\end{proposition}}
\def\bc{\begin{corollary}}
\def\ec{\end{corollary}}

\def\geq{\geqslant}
\def\leq{\leqslant}

\def\le{\leqslant}

\def\div{\mathord{{\rm div}}}

\def\Id{\textrm{Id}}

 \def\R{\mathbb R}
 \def\R{\mathbb R}    
\def\N{\mathbb N}  
   
\def\<{\langle} \def\>{\rangle}

\allowdisplaybreaks

\begin{document}

\title[Non-uniqueness  for  nFPEs and DDSDEs  ]{
Non-Uniqueness for Nonlinear Fokker--Planck Equations and Their Associated Distribution-Dependent SDEs}

\author{Huaxiang L\"u}
\address[H. L\"u]{Academy of Mathematics and Systems Science, Chinese Academy of Sciences, Beijing 100190, China, and  Fakult\"at f\"ur Mathematik, Bielefeld Universit\"at, D 33615 Bielefeld, Germany}
\email{lvhuaxiang22@mails.ucas.ac.cn }

\begin{abstract}

In this paper, we study distribution-dependent stochastic differential
equations on the domain $\mathcal O=\mathbb T^d$ or $\mathbb R^d$,
$d\geq 2$, of the form
\begin{align*}
{\rm d}X_t
=
v(t,X_t,\rho_t)\,{\rm d}t
+
\sqrt{2}\,
\sigma(t,X_t,\rho_t)\,{\rm d}W_t,
\qquad
\rho_t:=\frac{{\rm d}\mu_t}{{\rm d}x},
\end{align*}
where $\mu_t=\operatorname{Law}(X_t)$.  
Our main construction is carried out at the level of the associated nonlinear
Fokker--Planck equations. We first build non-unique probability solutions to
these PDEs  and then use the superposition principle to
obtain non-unique martingale solutions to the corresponding DDSDEs.

We establish two main non-uniqueness results concerning stationary states,
both on the torus and in the whole space, under the corresponding structural
assumptions.  First, we
construct a   divergence-free drift
$v\in C_tL^{d-}$ such that the DDSDE admits \emph{infinitely many} distinct
  solutions starting from the  stationary initial density. This
result lies at the natural critical regularity threshold: in several models, well-posedness is expected for drifts in $C_tL^{d+}$.
Second, for $d\geq 3$ and every prescribed $N\in\mathbb{N}$, we construct a
 divergence-free drift for which the DDSDE admits at least
$N$ distinct stationary martingale solutions. The resulting multiplicity of
equilibrium states is reminiscent of multistability and phase-transition
phenomena in physical systems.
\end{abstract}

\keywords{ distribution dependent stochastic differential equation, non-uniqueness in law, nonlinear Fokker-Planck equation,  invariant probability,  convex integration}

\date{\today}

\maketitle

\tableofcontents

\section{Introduction}

Interacting particle systems provide effective models for a wide range of
collective phenomena arising in physics, biology, kinetic theory, and other
scientific fields. As a prototypical example, consider a system of $N$ weakly
interacting particles on $\mathbb{R}^d$ governed by
\begin{align}
\dif X_t^{i,N}
&=
(V*\mu_t^N)(X_t^{i,N})\,\dif t
+
\sqrt{2}\,
(B*\mu_t^N)(X_t^{i,N})\,\dif W_t^i,
\qquad
i=1,\ldots,N,
\notag
\end{align}
where
\begin{align*}
\mu_t^N
:=
\frac1N\sum_{j=1}^N\delta_{X_t^{j,N}}
\end{align*}
is the empirical measure of the particle system,
$V:[0,T]\times\mathbb{R}^d\to\mathbb{R}^d$ and
$B:[0,T]\times\mathbb{R}^d\to\mathbb{R}^{d\times d}$ are measurable
interaction kernels, and $\{W^i\}_{i\in\mathbb{N}}$ is a family of independent
standard   Brownian motions. 
Under suitable assumptions on the coefficients and the initial particle
configuration, the propagation of chaos principle asserts that, as
$N\to\infty$, the empirical measure $\mu_t^N$ converges to a deterministic
probability measure $\mu_t$, which is the law of a solution to a
McKean--Vlasov stochastic differential equation, also known as a
distribution-dependent stochastic differential equation (DDSDE):
\begin{align}
\dif X_t
=
b\bigl(t,X_t,\mu_t\bigr)\,\dif t
+
\sqrt{2}\,
\sigma\bigl(t,X_t,\mu_t\bigr)\,\dif W_t,
\qquad
\mu_t=\operatorname{Law}(X_t).
\label{eq:sde}
\end{align}
Here
$
b:[0,T]\times\mathbb{R}^d\times\mathcal P(\mathbb{R}^d)
\to\mathbb{R}^d,
\sigma:[0,T]\times\mathbb{R}^d\times\mathcal P(\mathbb{R}^d)
\to\mathbb{R}^{d\times d}
$
are measurable coefficients. In the convolution-type example above, they are
given by
$
b(t,x,\mu)=
\int_{\mathbb{R}^d}
V(t,x-y)\,\mu(\dif y), 
\sigma(t,x,\mu)=
\int_{\mathbb{R}^d}
B(t,x-y)\,\mu(\dif y).
$ 
 
The (linear) Kolmogorov operator associated with \eqref{eq:sde} is
\begin{align*}
L_\mu 
=
\sigma(t,x,\mu)\sigma(t,x,\mu)^T:\nabla^2
+
b(t,x,\mu)\cdot\nabla.
\end{align*}
Formally, and  under suitable integrability assumptions,
It\^o's formula shows that the marginal laws
$\mu_t=\operatorname{Law}(X_t)$ satisfy the nonlinear Fokker--Planck equation (nFPE)
\begin{align*}
\partial_t\mu_t
=
L_{\mu_t}^*\mu_t.
\end{align*}

Besides arising as mean-field limits of interacting particle systems,
nonlinear Fokker--Planck equations also appear in many fundamental models from
physics, biology, and kinetic theory. At the macroscopic level, these equations
describe the time evolution of  particle densities
under the combined effects of diffusion, transport, and self-consistent
interaction forces.  
Typical examples include  nonlinear   porous-medium type equation,
\begin{align*}
\partial_t\rho
=
\Delta\beta(\rho)+\div(E\rho);
\end{align*} 
the Keller--Segel equation
\begin{align*}
\partial_t\rho
&=
\Delta\rho
-
\operatorname{div}(\rho\nabla c),
\qquad
-\Delta c= \rho;
\end{align*}
and the 2D vorticity Navier--Stokes equation
\begin{align*}
\partial_t\rho
&=
\Delta\rho
-
\operatorname{div}(\rho\nabla^\perp c),
\qquad
-\Delta c= \rho.
\end{align*}
Such equations possess rich analytic structures and natural probabilistic
interpretations: the associated DDSDE describes the stochastic evolution of a
representative particle, cell, or agent, while the nonlinear Fokker--Planck
equation governs the corresponding   density.

A central problem in the theory of DDSDEs and the associated nFPEs is their well-posedness. Classical results on
McKean--Vlasov equations in the whole space can be found in
\cite{Fun84,Szn84,Sch87,Chi94}. Since then, a substantial theory has been
developed for equations with singular coefficients; see
\cite{RZ21,CRF22,HW23,Wan23,Zha25}. These works extend, in various directions, the classical regularization theory for SDEs with singular drifts developed in\cite{KR05}. 
On the PDE side, Fokker--Planck--Kolmogorov equations with
measure-valued or probability-valued solutions have been studied extensively;
see   \cite{ BR21,BR22,BR23,BSS23} and the references therein. For results on
stationary distributions of DDSDEs, we refer to
\cite{Wan18,LM22,HWY24}. 
 
A related but conceptually different problem concerns the multiplicity of
stationary distributions. The coexistence of several stationary states is a
mathematical signature of multistability and is commonly associated with phase
transitions in mean-field systems. Moreover, at criticality the fluctuations
of mean-field models need not obey the usual Gaussian central-limit scaling;
see, for instance, \cite{EN78a,EN78b,Daw83}.
A classical example is Dawson's model with double-well confinement and
Curie--Weiss interaction on the line. It can be written in the form
\begin{align}
\dif X_t
=
-
\left[
\nabla U(X_t)
+
\theta\bigl(X_t-m_t\bigr)
\right]\dif t
+
\sigma_0\,\dif W_t,
\qquad
m_t
:=
\int_{\mathbb{R}}y\,\mu_t(\dif y),\notag
\end{align}
where 
\begin{align*}
U(x)
=
\frac14x^4-\frac12x^2,
\qquad
\theta>0.
\end{align*}
Dawson \cite{Daw83} proved that there exists a critical noise strength
$\sigma_c>0$ such that the system has a unique stationary distribution when
$\sigma_0>\sigma_c$, whereas three stationary distributions coexist when
$0<\sigma_0<\sigma_c$. This multiplicity of equilibrium states is referred to
as a phase transition. Further quantitative studies of phase transitions and
long-time behavior can be found in
\cite{HT10,Tug10,Tug13,DT16}. We also refer to
\cite{CP10,CGPS20} for related results in the periodic setting.
Most of these phase-transition results concern constant scalar diffusion.  The stationary problem for genuinely
distribution-dependent diffusion coefficients is considerably less
understood; see, in particular, \cite{Zha23}.

Another natural phenomenon is the possible
non-uniqueness of the time evolution starting from a fixed initial law. For
rough drift fields, even if the system is initialized at a stationary
distribution, it is natural to ask whether every weak solution must remain
stationary.
An even more striking question is whether infinitely many distinct evolutions
can start from a common stationary state.
In recent work, the authors \cite{GG25,LRZ25,RZZ25,LR25} established weak non-uniqueness
for SDEs with constant diffusion
$\sigma=\operatorname{Id}$ and supercritical drifts.   In that setting,
the associated Fokker--Planck equation is linear. Consequently, the existence of two solutions automatically yields infinitely
many solutions. For general distribution-dependent coefficients, however, the
associated Fokker--Planck equation is genuinely nonlinear in the density, and
convex combinations of solutions are no longer solutions in general.
The construction of infinitely many solutions requires additional
ideas and cannot be obtained by a direct application of the methods developed
for a single linear Fokker--Planck equation.

The main purpose of the present paper is to establish non-uniqueness in law
for a broad class of DDSDEs with nonlinear or nonlocal diffusion and
interaction mechanisms. We show that an arbitrarily small but sufficiently
rough external drift can destroy both uniqueness of the evolution starting
from a stationary initial law and uniqueness of stationary states.

We treat both the periodic and the whole-space settings. The periodic setting
is technically simpler and allows us to cover a broader class of nonlinear and
singular interaction mechanisms, since the   volume measure provides
a natural stationary reference state for many equations. Then in the latter whole-space case, we work within the confining framework studied by
Barbu and R\"ockner \cite{BR23}, in which the   equation enjoys
well-posedness under suitable assumptions. In   contrast, our results show
that these conclusions may fail after adding an arbitrarily small but
sufficiently rough, compactly supported divergence-free perturbation to the
drift.

\subsection{Main results on the torus}

From now on, we consider the following distribution-dependent SDE on
$\mathbb{T}^d$:
\begin{align}
    \dif X_t
    =
    v(t,X_t)\,\dif t
    +(V*\rho_t)(X_t)\,\dif t
    +\sqrt{2}\,\sigma(t,X_t,\rho_t)\,\dif W_t ,
    \label{eq:sdei}
\end{align}
where $\mu_t=\operatorname{Law}(X_t)$ and   we denote by $\rho_t$
its density with respect to the Lebesgue measure on $\mathbb{T}^d$, namely $\mu_t(\dif x)=\rho_t(x)\,\dif x$.  Here
$v:[0,T]\times\mathbb{T}^d\to\mathbb{R}^d$,
$V:\mathbb{T}^d\to\mathbb{R}^d$, and
$\sigma:[0,T]\times\mathbb{T}^d\times\mathcal P(\mathbb{T}^d)
\to \mathbb{R}^{d\times d}$ are measurable functions.  
The process $W$ is a standard $\mathbb{T}^d$-valued Brownian motion.
The displayed SDE should be understood  
through the corresponding martingale problem on the manifold $\mathbb{T}^d$.
We adopt the latter interpretation throughout this paper. We refer to
\cite{Hsu02} for background on Brownian motion and stochastic analysis on
manifolds. With a slight abuse of notation, we shall still refer to
\eqref{eq:sdei} as a DDSDE.

\begin{definition}[Martingale solution]
Let $C([0,T];\mathbb{T}^d)$ be the space of continuous paths on the torus,
equipped with its Borel $\sigma$-algebra. Denote by
\begin{align*}
\Pi_t(\omega):=\omega(t),
\qquad
\omega\in C([0,T];\mathbb{T}^d),
\end{align*}
the canonical process, and let $(\mathcal F_t)_{t\in[0,T]}$ be the natural
filtration generated by $(\Pi_t)_{t\in[0,T]}$.

The corresponding (linear)  diffusion operator is defined by
\begin{align*}
L_{\rho_t}:=  \sigma\sigma^{T}(t,x,\rho_t):\nabla^2+(v(t,x)+V*\rho_t)\cdot \nabla.    
\end{align*} 

We say that a probability measure $\mathbf Q$ on
$C([0,T];\mathbb{T}^d)$ is a martingale solution associated with
$L_{\rho_t}$ if
\begin{align*}
\dif\mathbf Q\circ \Pi_t^{-1} =\rho_t \,\dif x,
\qquad
t\in[0,T],
\end{align*}
and, for every $f\in C^2(\mathbb{T}^d)$, the process
\begin{align*}
     f( \Pi_t) -f(\Pi_0)-\int^t_0 L_{\rho_s}f( \Pi_s)\dif s
\end{align*}
is a $\mathbf Q$-martingale with respect to $(\mathcal F_t)_{t\in[0,T]}$.
\end{definition}

In this paper, we mainly focus on the associated nonlinear Fokker--Planck
equation. In distributional form, it reads
\begin{align}
\partial_t \rho-\div\div((\sigma\sigma^{T})(\rho) \rho)+\div(v\rho)+\div((V*\rho)\rho)&=0
\qquad
\text{on } (0,T)\times\mathbb{T}^d. 
\label{eq:fpe:i}
\end{align} 
For a matrix-valued function $A=(A_{ij})_{1\leq i,j\leq d}$, we define
$\div\div A=\sum_{i,j}\partial_i\partial_jA_{ij}$. 
By the superposition principle, any probability solution $\rho$ to the nFPE
  satisfying the standard integrability conditions on the
coefficients gives rise to a martingale solution of the DDSDE \eqref{eq:sdei}.

We  impose the following assumptions on the coefficients.

\begin{assumption}
\label{def:ass}
Assume that the interaction kernel satisfies
$V\in L^{d,\infty}(\mathbb{T}^d;\mathbb{R}^d)$.

We assume that the diffusion coefficient
$\sigma:[0,T]\times\mathbb{T}^d\times\mathcal P(\mathbb{T}^d)
\to\mathbb{R}^{d\times d}$ satisfies one of the following conditions.

\begin{enumerate}

\item[(1)] (Nemytskii-type)
$\sigma(t,x,\rho)=\sigma_1(t,x,\rho_t(x))$,
where
$\sigma_1:[0,T]\times\mathbb{T}^d\times  \mathbb{R}\to \mathbb{R}^{d\times d}$
is a measurable function satisfying
$\sigma_1\in C_b^2([0,T]\times  \mathbb{T}^d\times \mathbb{R})$, and $\|\partial_\rho[\sigma_1(t,x,\rho)\sigma_1^T(t,x,\rho)\rho]\|_{C_{t,x}^1C_\rho^0}<\infty$.

\item[(2)] (Convolution-type)
$\sigma(t,x,\rho)=\int_{\mathbb{T}^d}\sigma_2(t,x-y)\rho_t(y)\,\dif y$, or $\sigma(t,x,\rho)=\sqrt{\int_{\mathbb{T}^d}\sigma_2(t,x-y)\rho_t(y)\,\dif y}$,
where
$\sigma_2:[0,T]\times \mathbb{T}^d\to \mathbb{R}^{d\times d}$
is a measurable function satisfying
$\sigma_2\in C^2([0,T];C( \mathbb{T}^d))$.  In the second case, we assume that the matrix inside the square root is
symmetric and non-negative. The notation $\sqrt{B}$ denotes a matrix square
root of $B$, namely a matrix $A$ satisfying $AA^T=B$.

\item[(3)] $\sigma$ admits the decomposition
$\sigma\sigma^T
=
\tilde{\sigma}_1(\tilde{\sigma}_1)^T
+
\tilde{\sigma}_2(\tilde{\sigma}_2)^T$,
where $\tilde{\sigma}_1$ satisfies condition \textnormal{(1)} above and $\tilde{\sigma}_2$ satisfies condition \textnormal{(2)} above.

\end{enumerate}
\end{assumption}

In particular, it is straightforward to verify that, for every diffusion
coefficient $\sigma$ satisfying Assumption~\ref{def:ass}\textnormal{(2)}, or
Assumption~\ref{def:ass}\textnormal{(1)} in the space-independent case $\sigma(t,x,\mu)=\sigma_1(t,\rho(x)),$
the following property holds:
\begin{align}
\operatorname{div}\operatorname{div}
\bigl(
\sigma\sigma^T(t,x,1)
\bigr)
\equiv 0 .
\label{eq:divdiv}
\end{align}
Let us also point out that condition \eqref{eq:divdiv} covers many distribution-independent cases. For example, 
\begin{align*}
\sigma\sigma^T(t,x)
=
\operatorname{diag}\bigl(1+\varepsilon h(t,x),1,\ldots,1\bigr),
\end{align*}
where $h(t,x)$ is any smooth periodic function independent of $x_1$, and $|\varepsilon|$ is sufficiently
small. Or \begin{align*}
    \sigma\sigma^T(t,x)=\Id+\varepsilon
\operatorname{diag}(B,0,..,0), \ B
=
\begin{pmatrix}
\partial_{22}\psi(x_1,x_2), & -\partial_{12}\psi(x_1,x_2)\\
-\partial_{12}\psi(x_1,x_2), & \partial_{11}\psi(x_1,x_2)
\end{pmatrix}.
\end{align*}
where   $\psi\in C^\infty(\mathbb T^2)$ and $|\varepsilon|$ is sufficiently
small. 

 Consequently, under \eqref{eq:divdiv}, if $v$ is divergence-free, then the
constant density $\rho\equiv 1$ is a stationary probability solution of the
  nonlinear Fokker--Planck equation \eqref{eq:fpe:i}.

\subsubsection{Non-uniqueness of weak solutions}

Our first main result shows that the DDSDE may admit \emph{infinitely many}
distinct weak solutions, even when starting from this stationary state.

 \bt\label{thm:4converge}
Let $T>0$, $\epsilon_0\in(0,1)$. Let $d\geq 2$, $1<s<d$, and let
$p,r\in[1,\infty]$ satisfy
$\frac{d}{p}+\frac{1}{r}>1 .$
Assume that $V$ and $\sigma$ satisfy Assumption~\ref{def:ass}, and  
  also \eqref{eq:divdiv}. Let
$\overline v\in C^\infty([0,T]\times\mathbb{T}^d;\mathbb{R}^d)$ be
divergence-free. Then there exists a
distribution-independent, divergence-free vector field
$v\in L^r(0,T;L^p(\mathbb{T}^d;\mathbb{R}^d))
\cap C([0,T];L^s(\mathbb{T}^d;\mathbb{R}^d))$
satisfying
\begin{align}
\|v-\overline v\|_{L^r_tL^p(\mathbb{T}^d)}
+
\|v-\overline v\|_{C_tL^s(\mathbb{T}^d)}
\leq \epsilon_0 ,
\label{bd:v-barv}
\end{align}
such that the following assertions hold.

\begin{enumerate}
\item The nonlinear Fokker--Planck equation \eqref{eq:fpe:i} admits infinitely many
distinct probability density solutions $\rho^i$, $i\in\mathbb{N}$, with
initial condition $\rho_0\equiv 1$. More precisely, there exists
$\epsilon>0$ such that, for every $i\in\mathbb{N}$,
$\rho^i
\in
C\bigl([0,T];L^{1+\epsilon}(\mathbb{T}^d)\bigr)
\cap
L^1\bigl((0,T);W^{1,1+\epsilon}(\mathbb{T}^d)\bigr),
$
and 
$
|v|^{1+\epsilon}\rho^i
\in L^1([0,T];L^1(\mathbb{T}^d)).
$

\item
The DDSDE \eqref{eq:sdei} admits infinitely many distinct martingale solutions
$\mathbf Q^i$, $i\in\mathbb{N}$, starting from the  volume measure on
$\mathbb{T}^d$. Moreover, these solutions satisfy
$
\mathbf E^{\mathbf Q^i}
\left[
\int_0^T |v(s,\Pi_s)|^{1+\epsilon}\,\dif s
\right]
<\infty,
  i\in\mathbb{N}.
$
\end{enumerate}
\et
For sufficiently regular diffusion coefficients satisfying suitable uniform
ellipticity  assumptions, the corresponding initial value problem
is known to be well posed for drifts $v\in L^r_tL^p$ with
$\frac{d}{p}+\frac{2}{r}<1$; see \cite{KR05,RZ21,Zha25} and the references
therein. 
In contrast, our result shows that, for any  divergence-free drift field
$\overline v\in C_tL^{d+}$, every sufficiently small neighborhood of $\overline v$ in the
low-regularity topology
$  C_tL^{d-}$ contains ``bad'' drift fields for which the
corresponding DDSDE admits non-unique weak solutions. Our result is essentially sharp in view of the expected well-posedness for drifts.  Several examples covered by our results are presented in
Section~\ref{sec:example}.

We emphasize that our construction yields \emph{infinitely many} weak
solutions. Since the equation is nonlinear, this is
fundamentally different from merely producing finitely many distinct solutions:
convex combinations of solutions are no longer solutions in general. As a
consequence, the construction of infinitely many solutions requires additional
ideas and constitutes one of the main technical difficulties of the present
work. The proof of Theorem~\ref{thm:4converge} is given in
Section~\ref{cogpss3}- Section~\ref{sec:convex}.

\medskip

The previous results apply to a large class of interaction kernels. However, in many physical models, the interaction kernel may be more
singular. For instance, if $G$ denotes the Green function, then
\begin{align*}
\nabla G(x)\lesssim  |x|^{1-d}
\in L^{\frac{d}{d-1},\infty},
\qquad d\geq 2 .
\end{align*}
The Green function on the torus exhibits the same type of singularity near
the origin; see Section~\ref{sec:example:kse} for a detailed discussion.
Therefore, when $d\geq 3$, such kernels are not covered by
Theorem~\ref{thm:4converge}. The following result shows that our
non-uniqueness mechanism remains valid even for more singular interactions.

\bt\label{thm:6converge}
Let $T>0$, $\epsilon_0\in(0,1)$, and $N\in\mathbb{N}$. Let $d\geq 3$ and
$1<d_0<d$. Assume that the coefficients $\sigma$ and $V$ are given by
$\sigma(\rho)=\sqrt{a*\rho},
V=\operatorname{div} b,$
where $a,b\in L^{\frac{d}{d-\gamma},\infty}
(\mathbb{T}^d;\mathbb{R}^{d\times d})$ and
$
1<\frac{d}{\gamma}<d_0':=\frac{d_0}{d_0-1}.
$  
Assume moreover that $a*\rho$ admits a measurable matrix square root for all
densities $\rho$ considered below.

Then, for any smooth divergence-free vector field
$\overline v(t,x)$, there exists a   divergence-free
vector field
$
v\in C ([0,T];L^{d_0}(\mathbb{T}^d;\mathbb{R}^d) )
$
satisfying $\|v-\overline v\|_{C_tL^1(\mathbb{T}^d)}
\leq \epsilon_0, $ 
such that the following assertions hold.

\begin{enumerate}
\item
The nonlinear Fokker--Planck equation \eqref{eq:fpe:i} admits at least $N$
distinct nontrivial probability density solutions $\rho^i$, $1\leq i\leq N$,
with initial condition $\rho^i_0\equiv 1$. More precisely, for some
small constant $\epsilon>0$, one has
$
\rho^i
\in
C\bigl([0,T];L^{d_0'}\cap W^{1,1+\epsilon}(\mathbb{T}^d)\bigr),
1\leq i\leq N .
$

\item
The DDSDE \eqref{eq:sdei} admits at least $N$ distinct martingale solutions
starting from the  volume measure on $\mathbb{T}^d$.
\end{enumerate}
\et

Compared with Theorem~\ref{thm:4converge}, the time regularity estimates here
are upgraded to $C_t$ bounds. Consequently, we obtain stronger regularity
estimates for the densities $\rho^i$, which are sufficient to make sense of
the nonlinear interaction terms involving the singular kernels above. At the same time, we still keep the drift in the 
regime $C_tL^{d-}$. The
proof of Theorem~\ref{thm:6converge} is deferred to
Section~\ref{cogpss6}- Section~\ref{sec:6convex}.

\subsubsection{Non-uniqueness of stationary solutions}

Our second main result concerns the existence of non-unique stationary
solutions to the DDSDE \eqref{eq:sdei}. More precisely, by a stationary
solution we mean a stationary martingale solution $\mathbf Q$ whose  time
marginals are time-independent, namely
\begin{align*}
\dif\mathbf Q\circ \Pi_t^{-1} =\rho\,\dif x,
\qquad
t\geq 0,
\end{align*}
for some probability density $\rho$ on $\mathbb{T}^d$.
The non-uniqueness of stationary solutions means that the system admits several
different equilibrium states. From the physical viewpoint, this is reminiscent
of multistability and phase-transition phenomena.
In this setting, we study the following nonlinear stationary Fokker--Planck
equation:
\begin{align}
-\operatorname{div}\operatorname{div}
\bigl(
(\sigma\sigma^T)(\rho)\rho
\bigr)
+
\operatorname{div}(v\rho)
+
\operatorname{div}\bigl((V*\rho)\rho\bigr)
=
0.
\label{eq:fpe:sta}
\end{align}

Since the problem is posed on the compact torus, strongly coercive diffusion
operators often impose strong rigidity on stationary probability solutions.
For example, for the aforementioned case
\begin{align*}
-\Delta\beta(\rho)=0,\ 
\beta'>0, \ \ {\rm or}\  \ 
-\Delta(
(a*\rho) \rho
)
=
0,\ a(x)>0,a\in L^1,
\end{align*}
one immediately obtains that  the unique
probability solution is $\rho\equiv 1$.  
In contrast, in this part, we show that, for many systems, small but rough perturbations of
the drift can produce multiple stationary states.

\begin{theorem}\label{thm:sta}
Let $\epsilon_0\in(0,1)$ and $N\in\mathbb{N}$. Let $d\geq 3$ and
$1<d_0<d-1$. Assume that the time-independent coefficients $\sigma$ and $V$
either satisfy Assumption~\ref{def:ass}, or belong to the following class of
singular interaction kernels:
$
\sigma(\rho)=\sqrt{a*\rho},
V=2\,\div b,
$
where $
a,b\in
L^{\frac{d}{d-\gamma},\infty}
(\mathbb{T}^d;\mathbb{R}^{d\times d}),
1<\frac{d}{\gamma}<d_0':=\frac{d_0}{d_0-1}.
$ 
Then, for any smooth divergence-free vector field
$\overline v\in C^\infty(\mathbb{T}^d;\mathbb{R}^d)$, there exists a
divergence-free vector field
$
v\in L^{d_0}(\mathbb{T}^d;\mathbb{R}^d)
$
satisfying $\|v-\overline v\|_{L^1(\mathbb{T}^d)}
\leq
\epsilon_0,$ 
such that the following assertions hold.

\begin{enumerate}
\item
The stationary equation \eqref{eq:fpe:sta} admits at least $N$ distinct
non-constant probability density solutions $\rho^i$, $1\leq i\leq N$.
Moreover, for some $\epsilon>0$ and every $1\leq i\leq N$, one has
$
\rho^i
\in
L^{d_0'}(\mathbb{T}^d)
\cap
W^{1,1+\epsilon}(\mathbb{T}^d).
$

\item
The DDSDE \eqref{eq:sdei} admits at least $N$ distinct nontrivial stationary
martingale solutions.  
\end{enumerate}
\end{theorem}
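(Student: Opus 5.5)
The approach is to fix $N$ distinct non-constant smooth target densities and then build, by convex integration, a single divergence-free drift $v$ for which all $N$ targets become exact stationary solutions. First I choose smooth positive probability densities $\overline\rho^1,\dots,\overline\rho^N$ on $\mathbb T^d$. They should be close to $1$ but pairwise distinct and non-constant, say $\overline\rho^i=1+\delta\phi_i$ with $\int\phi_i=0$. Their spatial supports should be separated enough that the perturbations can be localized independently; for instance, $\phi_i$ is a mean-zero bump supported in a ball $B_i$ with pairwise disjoint balls. For each $i$ the stationary equation \eqref{eq:fpe:sta} reads
\begin{align*}
\div(v\rho^i)=\div\div\bigl((\sigma\sigma^T)(\rho^i)\rho^i\bigr)-\div\bigl((V*\rho^i)\rho^i\bigr).
\end{align*}
Once $\rho^i$ is fixed, the right-hand side is a given distribution. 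The unknowns are then a single divergence-free $v$ and the corrections $\rho^i-\overline\rho^i$. The nonlinearity in $\rho$ therefore only enters through data that is computed from each target individually. This is exactly why $N$ solutions can be produced simultaneously with one drift, and why convexity is not needed.

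The iteration is a stationary analogue of the scheme used for Theorems~\ref{thm:4converge} and \ref{thm:6converge}. At step $q$ we have $(v_q,\rho^1_q,\dots,\rho^N_q,R^1_q,\dots,R^N_q)$ solving
\begin{align*}
\div(v_q\rho^i_q)-\div\div\bigl((\sigma\sigma^T)(\rho^i_q)\rho^i_q\bigr)+\div\bigl((V*\rho^i_q)\rho^i_q\bigr)=-\div R^i_q .
\end{align*}
We start from $v_0=\overline v$ and $\rho^i_0=\overline\rho^i$, with $R^i_0$ obtained through an antidivergence. Then we add Mikado-type density and velocity perturbations $\theta^i_{q+1}$ and $w_{q+1}$, built from stationary concentrated Mikado flows. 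These flows are divergence-free and have disjoint supports for distinct directions; here $d\ge3$ gives enough room. The quadratic term $\theta^i_{q+1}w_{q+1}$ cancels $R^i_q$. To cancel all $N$ errors at once with the same $w_{q+1}$, I assign disjoint families of Mikado directions and shifts, one family for each index $i$. I also let $\theta^i_{q+1}$ be built only from the $i$-th family of densities $\Theta$. The cross terms $\theta^i w^{(j)}$ for $j\ne i$ then vanish by disjointness of supports. Meanwhile $\div(\rho^i_q w^{(j)})$ is a linear error that is small by the usual concentration and oscillation estimates.

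The Mikado concentration is chosen so that $w\in L^{d_0}$ while $\theta\in L^{d_0'}\cap W^{1,1+\epsilon}$. The condition $d_0<d-1$ leaves room for this because stationary Mikado flows live on $(d-1)$-dimensional cross sections. The perturbation is then small in $L^1$, which gives $\|v-\overline v\|_{L^1}\le\epsilon_0$. To keep $\int\rho^i=1$ I add mean corrections. Positivity is kept by staying $L^\infty$-close in spirit to $\overline\rho^i$ on average, or else I add the mollified profile and use nonnegative $\theta$ with compensated means. Distinctness and non-constancy persist because the total perturbation is small in a weak norm compared to $\delta$.

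The main obstacle is the nonlinear diffusion and interaction error. Its contribution $\div\div\bigl((\sigma\sigma^T)(\rho^i_{q+1})\rho^i_{q+1}-(\sigma\sigma^T)(\rho^i_q)\rho^i_q\bigr)$ has two derivatives falling on a highly oscillating density, so it must be rewritten as $\div$ of something small. In the Nemytskii case I linearize using the $C^1_{t,x}C^0_\rho$ bound on $\partial_\rho[\sigma_1\sigma_1^T\rho]$. This gives $\div$ of a term bounded by $\|\nabla\theta\|_{L^{1+\epsilon}}$, which is small by the $W^{1,1+\epsilon}$ smallness of the perturbation. In the convolution cases I use the smoothing of $\sigma_2$. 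In the singular case $a,b\in L^{d/(d-\gamma),\infty}$, I write $a*\rho$ and $(\div b)*\rho\,\rho$ in divergence form. I then use Hardy--Littlewood--Sobolev with $\rho\in L^{d_0'}$, which needs $d/\gamma<d_0'$, to get $L^1$-smallness of the new stresses. Passing to the limit, $R^i_q\to0$ in $L^1$, so each $\rho^i$ solves \eqref{eq:fpe:sta} distributionally. Finally, the integrability $|v|\rho^i\in L^1$ and $|\sigma\sigma^T(\rho^i)|\rho^i\in L^1$ lets me apply the superposition principle to the constant-in-time curve $t\mapsto\rho^i$. This yields $N$ distinct stationary martingale solutions.
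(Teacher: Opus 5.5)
Your proposal is correct and follows essentially the paper's route. Both use a stationary Mikado convex integration in the $L^{d_0}\times L^{d_0'}$ scaling, with one shifted family of directions per index $i$; the pipes are made disjoint, which is where $d\geq3$ enters. Positivity comes from nonnegative density perturbations plus constant mean corrections. The nonlinear errors are controlled by the $W^{1,1+\epsilon}$-smallness of $\theta$ (which is where $d_0<d-1$ enters), and in the singular case also by Hardy--Littlewood--Sobolev together with the $L^{d/\gamma}$-smallness of $\theta$. The argument ends with the superposition principle.

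The differences are cosmetic. The paper starts from $1+4^{-i}\sin(2\pi x_1)$ rather than disjointly supported bumps. Disjoint target supports are not needed, because the independence of the $N$ equations comes entirely from the disjoint Mikado families. Also, positivity must come from your second mechanism (nonnegative $\theta$ with compensated means), not from $L^\infty$-closeness, since the perturbations are large in $L^\infty$.
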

The proof of Theorem~\ref{thm:sta} is given in Section~\ref{cogpss4}- Section~\ref{sec:convexsta}.

Moreover, in the linear stationary case
\begin{align*}
-\Delta\rho+\operatorname{div}(v\rho)=0,
\end{align*}
with $v\in L^p $, several uniqueness and
non-uniqueness results are known. For $p>d$ and $d\geq 2$, Bogachev,
R\"ockner, and Stannat \cite{BRS02} proved  that equation admits a unique
probability solution $\rho$ in the class $v\rho-\nabla\rho\in L^1 (\mathbb{T}^d;\mathbb{R}^d)$.
In this paper, we prove that, for every $d\geq 3$, there
exists a divergence-free drift
$
v\in L^{d-1-} 
$
such that the above linear stationary equation admits non-unique probability
solutions, which moreover satisfy the conditions above.   We also recall that, for divergence-free drifts
$v\in L^p $, (non)-uniqueness in the class
$\rho\in H^1 $ was studied in \cite{Zhi04,CO22}.

\subsection{Main results in the whole space}\label{sec:rcase}
The results above can also be adapted to the whole-space setting. The
periodic case is technically simpler, since the  volume measure
provides a natural stationary probability density for many equations under
consideration. In contrast, there is no analogous canonical probability
density on $\mathbb{R}^d$. As a consequence, the construction in the
whole-space setting requires a more delicate analysis of the behavior at
infinity.

We consider the following nonlinear Fokker--Planck equation on
$\mathbb{R}^d$, $d\geq 2$:
\begin{align}
\partial_t\rho
-
\Delta 
\beta(\rho)  
+
\div(v\rho)
+
\div(Eb(\rho)\rho)
=
0 .
\label{R:eq:fpe}
\end{align}
We assume that the coefficients satisfy the following conditions:
\begin{enumerate}
    \item[(a)] $\beta\in C^2(\mathbb{R})$, $\beta(0)=0$, and there exist
    constants $0<\gamma<\gamma_1<\infty$ and $0<\gamma_2<\infty$ such that
   $
    \gamma\leq \beta'(r)\leq \gamma_1,
    |\beta''(r)|\leq \gamma_2,
    r\in\mathbb{R}.
 $

    \item[(b)] $b\in C_b^1(\mathbb{R})$ and
   $
    b(r)\geq b_0>0,
    r\geq 0 .
   $

    \item[(c)] $E=-\nabla\Phi$, where
    $\Phi\in C(\mathbb{R}^d)\cap W^{2,\infty}_{\mathrm{loc}}(\mathbb{R}^d)$
    satisfies $\Phi\geq 1$,
    $
    \lim_{|x|\to\infty}\Phi(x)=+\infty,
     $
    and there exists $m\in[2,\infty)$ such that
    $\Phi^{-m}\in L^1(\mathbb{R}^d)$ and
    $
    |\nabla\Phi|+|\nabla^2\Phi|
    \leq
    \Phi^m .
    $
\end{enumerate}

Such equations arise in statistical physics; see, for instance,
\cite{Fra05,FD01,SNC07,Tsa09}. They also play an important role in
nonequilibrium statistical mechanics, where they describe the evolution of
particle densities in disordered media.

The related DDSDE is formally given by
\begin{align}
\dif X_t
=
v(t,X_t)\,\dif t
+
E(X_t)b(\rho_t(X_t))\,\dif t
+
\sqrt{\frac{2\beta(\rho_t(X_t))}{\rho_t(X_t)}}\,\dif W_t .
\label{R:eq:por}
\end{align} 

In the case $v=0$, the system is conservative and admits a stationary solution of the form
\begin{align}
\rho^{\mathrm{st}}(x)
:=
g^{-1}\bigl(-\Phi(x)+\mu^{\mathrm{st}}\bigr),
\label{def:rhost}
\end{align}
where $\mu^{\mathrm{st}}\in\mathbb{R}$ is uniquely determined by the mass
constraint
\begin{align}
\int_{\mathbb{R}^d}
g^{-1}\bigl(-\Phi(x)+\mu^{\mathrm{st}}\bigr)
\,\dif x
=
1 ,\notag
\end{align}
and
\begin{align}
g(r)
:=
\int_1^r
\frac{\beta'(s)}{s b(s)}
\,\dif s,
\qquad
r>0 .\notag
\end{align}
By assumptions \textnormal{(a)} and \textnormal{(b)}, the function $g$ is
strictly increasing and has logarithmic growth at both $0$ and $\infty$.
Equivalently, $g^{-1}$ has exponential-type growth. Together with the
confining assumption on $\Phi$,  $\mu^{\mathrm{st}}$ exists and is unique.  

Recently, the well-posedness theory for this class of nonlinear
Fokker--Planck equations has been further developed in a number of works. We
refer the reader to \cite{BR21,BR22,BR23,BR23b,Gru25} and to the monograph
\cite{BKRS22} for a comprehensive account of the subject. For the reader's
convenience, we recall below several results from \cite{BR23} that are
particularly relevant to the present work. In addition to
\textnormal{(a)}--\textnormal{(c)}, some of these results require the following
conditions:
\begin{enumerate}
    \item[(d)] $E\in L^\infty(\mathbb{R}^d;\mathbb{R}^d)
    \cap W^{1,1}_{\mathrm{loc}}(\mathbb{R}^d;\mathbb{R}^d)$ and
    $ 
    \operatorname{div}E
    \in    L^2(\mathbb{R}^d)+L^\infty(\mathbb{R}^d).
   $ 

    \item[(e)] For a.e. $x\in\mathbb{R}^d$, $
    \gamma_1\Delta\Phi(x)-b_0|\nabla\Phi(x)|^2
    \leq 0 .$ 
\end{enumerate}
The following properties are known when $v\equiv0$:
\begin{itemize}
    \item Under conditions \textnormal{(a)}--\textnormal{(d)}, existence and
    uniqueness of mild solutions are established via a nonlinear contraction
    semigroup $S(t)$, $t>0$, acting on $L^1(\mathbb{R}^d)$.

    \item \textnormal{($H$-Theorem)} Under assumptions
    \textnormal{(a)}--\textnormal{(e)},  the solution $u(t)$ converges, along
    any sequence $t_n\to\infty$, to the stationary state $\rho^{\mathrm{st}}$ in
    $L^1(\mathbb{R}^d)$.

    \item Under conditions \textnormal{(a)}--\textnormal{(e)}, the stationary
    equation associated with \eqref{R:eq:fpe} admits a unique stationary
    probability solution, given by \eqref{def:rhost}.
\end{itemize}

Our main result is that the above properties may fail under suitable small but
rough perturbations of the drift field.

First, we show that there exists a drift field $v$ such that the equation
admits infinitely many solutions starting from the stationary state
$\rho^{\mathrm{st}}$. Moreover, the perturbation $v$ is compactly supported, and
therefore it does not affect the behavior of $E$ at infinity. Due to technical
limitations, we additionally require the potential $\Phi$ to be constant on some domain.

\bt\label{R:thm:4converge}
Let $\epsilon_0\in(0,1)$. Let $d\geq 2$, $1<s<d$, and let
$p,r\in[1,\infty]$ satisfy
$
\frac{d}{p}+\frac{1}{r}>1 .
$
Assume that conditions \textnormal{(a)}--\textnormal{(c)} hold, and that
$\Phi$ is constant on $[-\frac12,\frac12]^d$. Then there exists a
divergence-free vector field $v(t,x)$ such that
$
\operatorname{supp}_{t,x}v
\subset
 [\frac1{12},\frac{11}{12} ]
\times
 [-\frac12,\frac12 ]^d,
$
and $\|v\|_{L^r_tL^p(\mathbb{R}^d)}
+
\|v\|_{C_tL^s(\mathbb{R}^d)}
\leq
\epsilon_0 ,$ 
such that the following assertions hold.

\begin{enumerate}
    \item
    The nonlinear Fokker--Planck equation \eqref{R:eq:fpe} admits infinitely
    many distinct probability density solutions
    $\rho^i\in C([0,1];L^1(\mathbb{R}^d))$, $i\in\mathbb{N}$, satisfying
 $
    \rho^i(t,\cdot)=\rho^{\mathrm{st}}
    \text{for }
    t\in
     [0,\frac1{12} ]
    \cup
     [\frac{11}{12},1].
   $

    \item
    The DDSDE \eqref{R:eq:por} admits infinitely many distinct weak
    solutions starting from the stationary probability measure
    $\rho^{\mathrm{st}}(x)\,\dif x$.
\end{enumerate}
\et

We believe that the regularity of the drift field obtained here,
$v\in C_tL^{d-}$, is essentially optimal. In the linear case
$\beta'(r)=b(r)=1$, uniqueness is known to hold for drift fields in
$C_tL^{d+}$; see for example, \cite{KR05,Zha11,RZ21}. Thus, at least in this particular setting, our
result is sharp with respect to the integrability of the drift. 

We next show that the full convergence-to-equilibrium conclusion associated
with the $H$-theorem is not stable. 
Indeed, since
$
  v=0 
$
and $\rho^i=\rho^{\mathrm{st}}$ near the endpoints $t=0$ and $t=1$, we
may extend the pair $(v,\rho^i)$ periodically in time by setting
$
v(t+k)=v(t),
\rho^i(t+k)=\rho^i(t),
k\in\mathbb{N},
t\in[0,1).
$
As a consequence, any non-stationary solution obtained in this way cannot
converge to $\rho^{\mathrm{st}}$ or any other stationary solution as $t\to\infty$.

Finally, we show that there exists a drift field $v$ such
that the  equation admits multiple stationary solutions.

\begin{theorem}\label{R:thm:sta}
Let $\epsilon_0\in(0,1)$ and $N\in\mathbb{N}$. Let $d\geq 3$ and
$1<d_0<d-1$. Assume that conditions \textnormal{(a)}--\textnormal{(c)} hold. Then there exists a
compactly supported divergence-free vector field
$
v\in L^{d_0}(\mathbb{R}^d;\mathbb{R}^d),
\operatorname{supp}v
\subset
 [-\frac12,\frac12 ]^d,
$
satisfying $
\|v\|_{L^1(\mathbb{R}^d)}
\leq
\epsilon_0 ,$
such that the following assertions hold.

\begin{enumerate}
\item
Equation \eqref{R:eq:fpe} 
admits at least $N$ distinct stationary probability density solutions $\rho^i$,
$1\leq i\leq N$. Moreover, for some $\epsilon>0$ and every $1\leq i\leq N$,
$
\rho^i
\in
L^{d_0'}(\mathbb{R}^d)
\cap
W^{1,1+\epsilon}(\mathbb{R}^d).
$

\item
The DDSDE \eqref{R:eq:por} admits at least $N$ distinct stationary  weak
solutions. 
\end{enumerate}
\end{theorem}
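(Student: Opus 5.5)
Our plan is to reduce the whole-space stationary problem to a local construction inside the cube $Q:=[-\frac12,\frac12]^d$. We then glue it to the explicit stationary state $\rho^{\mathrm{st}}$ outside $Q$, in the same spirit as the torus result, Theorem~\ref{thm:sta}. We look for stationary solutions of the form $\rho^i=\rho^{\mathrm{st}}+w^i$ with $\supp w^i\subset Q$ and $\int w^i=0$. We take $v$ divergence-free with $\supp v\subset Q$. Outside $Q$ we then have $v=0$ and $\rho^i=\rho^{\mathrm{st}}$, so the equation \eqref{R:eq:fpe} holds there because $\rho^{\mathrm{st}}$ solves it with $v=0$.

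\textbf{Recasting the equation in flux form.} It is convenient to write the stationary problem as $\div F(\rho)=\div(v\rho)$, where
\begin{align*}
F(\rho):=\nabla\beta(\rho)-Eb(\rho)\rho=\rho\, b(\rho)\,\nabla\bigl(g(\rho)+\Phi\bigr).
\end{align*}
This identity uses $\beta'(\rho)\nabla\rho=\rho b(\rho)\nabla g(\rho)$, and it gives $F(\rho^{\mathrm{st}})=0$. If $\Phi$ is not constant on $Q$, we set $\tilde\rho^i:=g^{-1}(-\Phi+\mu^{\mathrm{st}}+\psi^i)$ with $\psi^i\in C_c^\infty(Q)$. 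We adjust the $\psi^i$ so that $\int\tilde\rho^i=1$, and so that the $\tilde\rho^i$ are pairwise distinct and have positive lower bounds on $Q$. By (a)–(b) and continuity of $\Phi$, these are smooth perturbations of $\rho^{\mathrm{st}}$ supported in $Q$.

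The stationary equation for $\tilde\rho^i$ is then $\div(v\tilde\rho^i)=\div F(\tilde\rho^i)=:f^i$. Here $f^i$ is a smooth function, compactly supported in $Q$, of zero mean. The key point is that the single drift $v$ must work simultaneously for $N$ different densities. This is exactly the simultaneous convex-integration scheme used for Theorem~\ref{thm:sta}. We assume, as in that proof, a proposition localized to a cube: given $N$ smooth strictly positive target densities and smooth zero-mean sources supported in $Q$, it produces a divergence-free $v\in L^{d_0}$ with $\supp v\subset Q$ and $\|v\|_{L^1}\le\epsilon_0$. It also produces densities $\rho^i=\tilde\rho^i+\theta^i$, with $\theta^i$ supported in $Q$, small in $L^{d_0'}\cap W^{1,1+\epsilon}$ and of zero mean, such that $\div(v\rho^i)=\div F(\rho^i)$.

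\textbf{The main obstacle.} The hardest step is the iterative construction. It uses intermittent Mikado-type flows, with concentration in $d-1$ directions, and this is where the restriction $d_0<d-1$ enters. At each step a new error $R_q^i$ must be cancelled for all $i$ at once by one common velocity perturbation together with density perturbations. The density perturbation $\theta^i_{q+1}$ carries the amplitude needed to absorb $R^i_q$ divided by $\tilde\rho^i$. The nonlinear remainder $F(\rho_{q+1})-F(\rho_q)$ is controlled through $W^{1,1+\epsilon}$ smallness of $\theta_{q+1}$, using $\beta\in C^2$ and $b\in C_b^1$. What we would try first is to use disjoint families of Mikado directions (or disjoint temporal/spatial supports) for each $i$. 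This decouples the $N$ cancellations and keeps the cross terms $v^j\theta^i$, $i\neq j$, as small errors. The same mechanism is presumably what produces $N$ rather than infinitely many solutions. Cutoffs keep all supports inside $Q$, and an inverse divergence preserving compact support removes the remaining zero-mean errors.

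\textbf{Conclusion.} Distinctness of the $\rho^i$ follows from the distinct $\tilde\rho^i$ plus small corrections. Since the $\theta^i$ are small, they preserve nonnegativity and unit mass. For assertion (2), we apply the superposition principle to each stationary density. The required integrability is $\int(|v|+|Eb(\rho)|+\beta(\rho)/\rho)\rho<\infty$. Near $Q$ it follows from $\rho^i\in L^{d_0'}$ together with $v\in L^{d_0}$. Outside $Q$, where $\rho^i=\rho^{\mathrm{st}}$, it follows from the confining assumption (c). The result is $N$ distinct stationary weak solutions of \eqref{R:eq:por}.
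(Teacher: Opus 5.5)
Your architecture is the paper's. You write each density as $\rho^{\mathrm{st}}$ plus a perturbation supported in $[-\frac12,\frac12]^d$, run the stationary Mikado scheme of Theorem~\ref{thm:sta} with a separate direction family for each $i$, replace $\mathcal{R}_1,\mathcal{B}_1$ by Bogovskii-type operators that preserve compact support, and conclude by superposition. Your initialization $g^{-1}(-\Phi+\mu^{\mathrm{st}}+\psi^i)$ in place of $\rho^{\mathrm{st}}+4^{-i}F$ is harmless. Your integrability check via $\rho^{\mathrm{st}}\lesssim e^{-c\Phi}$, $|\nabla\Phi|\le\Phi^m$ and $\Phi^{-m}\in L^1$ is more explicit than the paper's.

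\textbf{The gap: positivity.} You infer nonnegativity of $\rho^i$ from smallness of $\theta^i$ in $L^{d_0'}\cap W^{1,1+\epsilon}$. For $d\ge3$ these norms give no pointwise control, and the actual perturbations are of size $r_\perp^{-(d-1)/d_0'}$ in $L^\infty$. What is needed, and what the paper builds into Proposition~\ref{R:prop:sta}, is the one-sided bound $\rho^i_{q+1}-\rho^i_q\ge-\delta_{q+1}^{1/d_0'}$. This bound comes from sign structure, not from norms:
\begin{itemize}
\item $\theta^{(p,i)}_{q+1}\ge0$, because $\chi$, $\Gamma_\xi$ and $\phi'$ are all nonnegative.
\item The only negative contributions are the mollification error $\rho^i_l-\rho^i_q$ and the mass correction, and both are small in $L^\infty$.
\item On $\mathbb{R}^d$ the mass correction cannot be a constant as on the torus. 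The paper uses $-\hat\rho^i_q\int\theta^{(p,i)}_{q+1}\,\dif x$, where $\hat\rho^i_q$ is a compactly supported probability density bounded in $C^2$. This correction is therefore compactly supported, restores zero mean, and is uniformly small.
\end{itemize}
Summing these one-sided bounds against the positive lower bound of $\tilde\rho^i$ on the cube gives positivity. Your ``localized proposition'' must list this pointwise lower bound among its conclusions; otherwise $\rho^i$ need not be a probability density and the superposition step has nothing to act on.

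\textbf{Smaller points.} The cancellation does not use an amplitude ``divided by $\tilde\rho^i$''. The stress $M^i_l$ is cancelled by the quadratic term $w^{(p,i)}_{q+1}\theta^{(p,i)}_{q+1}$. Its amplitudes $(n/\zeta)^{1/d_0}$ and $(n/\zeta)^{1/d_0'}\Gamma_\xi(M^i_l/|M^i_l|)$ multiply to $M^i_l$ without reference to the background density. The linear term $w_{q+1}(\rho^i_l+\rho^{\mathrm{st}})$ is simply an error, small in $L^1$ by intermittency. A linear cancellation with one common $w$ against $N$ distinct backgrounds would be overdetermined. In the paper the cross terms $w^{(p,j)}_{q+1}\theta^{(p,i)}_{q+1}$, $j\neq i$, vanish identically because of the shifts in Lemma~\ref{lem:4}. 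No separate cutoffs are needed either: the amplitudes vanish where $M^i_l=0$, and supports grow only through mollification, from $\Omega_q$ to $\Omega_{q+1}\subset[-\frac12,\frac12]^d$.
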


The proofs of the above two theorems follow essentially the same strategy as
in the torus setting. The only new issue arises from the treatment of the
inverse divergence operator in the whole space. This difficulty is overcome by
introducing the Bogovskii operator in Section~\ref{R:tamr}. We shall omit a number of routine
computations and highlight only the necessary changes. The complete proofs are
deferred to Section~\ref{R:cogpss4}.
 
\subsubsection{Further discussion on the double-well model} 

We further discuss a possible connection with the classical double-well model. Consider, for instance, an even double-well potential
$
\Phi(x)=\frac14 |x|^4- |x|^2 .
$
Then the stationary density $\rho^{\mathrm{st}}$ defined in
\eqref{def:rhost} is also even. In particular,   one has
$
\int_{\mathbb{R}^d} x_i \rho^{\mathrm{st}}(x)\,\dif x=0,
1\leq i\leq d .
$
Here $x=(x_1,\ldots,x_d)$.
This observation is relevant to the mean-field model considered by Dawson
\cite{Daw83}:
\begin{align}
-\sigma_0\Delta\rho
+\div\bigl((-\nabla\Phi+m_\rho)\rho\bigr)
=
0,
\qquad
m_\rho:=\int_{\mathbb{R}^d}x\rho(x)\,\dif x ,\notag
\end{align}which also admits
$\rho^{\mathrm{st}}$ as a stationary solution. Dawson \cite{Daw83} showed that in 1D case, depending on the
parameters of the model, the system may exhibit either a unique stationary
state or 3 stationary states.  

If, in Theorem~\ref{R:thm:sta}, one could additionally ensure that the convex
integration construction preserves the even symmetry of the density, then the
theorem would also apply to the  double-well
model. A natural strategy would be to impose the symmetry conditions that
$v$ is odd and $\rho^i$ is even. Accordingly, throughout the convex integration
scheme, one would require
$
v_q$  to be odd,
$\rho_q^i$  to be even, and
$M_q^i$ to be odd.
Preserving these symmetries at each iteration would be an interesting problem,
but it requires a more delicate analysis of the construction, in particular of
the choice of the coefficients in the building blocks.

\subsection{Related literature and main new ideas}

The main objective of this article is to establish the first statement in Theorem \ref{thm:4converge} (and in Theorem \ref{thm:sta}), which will be proved via the convex integration method.  
This technique was first introduced to fluid dynamics by De Lellis and Sz\'ekelyhidi Jr. \cite{DLS09, DLS10, DLS13} and  has  led to numerous groundbreaking results for determined and stochastic fluid dynamics on the torus. For the incompressible Euler equations, the famous Onsager conjecture was proved in \cite{Ise18,BDLSV19}. We refer to \cite{TCP,DSJ17,BFH20,NV23,GKN23,GR24} for further literature on the Euler equation. For the Navier-Stokes equations, the sharp non-uniqueness of weak solutions has been shown in \cite{BVb,BCV18,CL22,CL23}.  We refer to \cite{BV,HZZ21b,HZZ19,HZZ22,LZ23b,LZ24} for further results on the Navier-Stokes equations.   
For the ODE or transport equations with Sobolev vector fields, we refer to \cite{CGSW15,MS18,MS19,MS20,BCDL21,CL21,PS23}. 
For the SDE or the Fokker-Planck equation, we refer to \cite{LRZ25,LR25} for non-uniqueness results.  Regarding the non-unique stationary solution to NS equation, we refer to \cite{Luo19}.   Recently, this method has been applied to fluid dynamics on the whole space, see \cite{MNY24a,MNY24b,LR25}.

 Existing convex integration results are largely restricted to systems involving
only finitely many equations, and  one cannot directly apply it 
simultaneously to infinitely many equations. Indeed, doing so would require
constructing the total perturbation as an infinite sum of sub-perturbations,
each of which is designed to eliminate the corresponding stress term. The
convergence of such a total perturbation would then become highly problematic.

Our approach is to introduce an increasing sequence of integers
$\{N_q\}_{q\in\mathbb{N}}$ such that
$2^{N_q}\sim \lambda_q^\alpha$ for some sufficiently small $\alpha>0$.
At step $q+1$, we apply the standard convex integration scheme only to the first $N_{q+1}$ equations. To ensure convergence
of the drift perturbation, we impose an additional decay factor $2^{-i}$ in
the estimate of the $i$-th stress term. A key observation is that this extra
decay does not interfere with the convex integration construction. Indeed, by
the definition of $N_{q+1}$, for every $i\leq N_{q+1}$ one has
$2^i\lesssim \lambda_{q+1}^{\alpha}$, which can be absorbed
as a low-frequency loss.
For the remaining equations, namely those with $i>N_{q+1}$, the stress term
$M_q^i$ is not treated by the convex integration perturbation at step $q+1$.
Nevertheless, it must still satisfy the more restrictive estimates required at
the next level. The key point is that $M_q^i$ can be rewritten in a form that
enjoys the stronger decay bound $4^{-i}$. Since $i>N_{q+1}$, we have
$2^{-i}\lesssim 2^{-N_{q+1}}\lesssim \lambda_{q+1}^{-\alpha}$, and hence
$
4^{-i}
\lesssim
2^{-i}\lambda_{q+1}^{-\alpha}.
$
This provides precisely the additional smallness needed to close the
iteration.

In the construction of stationary solutions, we can no longer exploit time
intermittency to improve the estimates. Moreover, the intermittent flows used
in the time-dependent construction, which have intermittency dimension $D=0$,
are no longer applicable. Instead, we employ Mikado-type flows with
intermittency dimension $D=1$. Consequently, the construction yields drift
regularity only up to
$
L^{d-D-}=L^{d-1-}.
$

\section{Examples covered by the main results}\label{sec:example}

In this section, we present several examples that fall within the scope of our
framework. We restrict our discussion to the periodic setting, corresponding
to Theorem~\ref{thm:4converge}--Theorem \ref{thm:sta}. These examples illustrate
that the non-uniqueness mechanism developed in this paper applies to a broad
class of nonlinear and nonlocal Fokker--Planck-type equations arising in
probability theory, statistical physics, and continuum mechanics.

\subsection{The distribution-independent case}

We first consider a distribution-independent SDE on $\mathbb{T}^d$:
\begin{align}
    \dif X_t
    =
    v(t,X_t)\,\dif t
    +
    \sqrt{2}\,\sigma(t,X_t)\,\dif W_t ,
    \notag
\end{align}
where
$v:[0,T]\times\mathbb{T}^d\to\mathbb{R}^d$ and
$\sigma:[0,T]\times\mathbb{T}^d\to\mathbb{R}^{d\times d}$ are measurable
functions. The corresponding Fokker--Planck equation reads as
\begin{align}
\partial_t\rho
-
\operatorname{div}\operatorname{div}\bigl(\sigma\sigma^T\rho\bigr)
+
\operatorname{div}(v\rho)
=
0 .
\notag
\end{align}
For simplicity, assume that $\sigma$ is uniformly elliptic and sufficiently
regular. We also assume that
$
\operatorname{div}\operatorname{div}\bigl(\sigma\sigma^T\bigr)
\equiv 0.
\notag
$ Then the constant density $\rho\equiv1$ is a
stationary probability solution when $v$ is divergence-free. In this case, the
model is covered by Assumption~\ref{def:ass}\textnormal{(1)} with $V=0$.
Therefore, by Theorem~\ref{thm:4converge}, there exists a
distribution-independent, divergence-free drift field
$
v\in C_tL^{d-}(\mathbb{T}^d)
$
such that the Fokker--Planck equation and the corresponding SDE admit
infinitely many weak solutions starting from the stationary density
$\rho_0\equiv1$.

Moreover, this result is essentially optimal with respect to the integrability
of the drift. Indeed, for every
$
v\in C_tL^p,
p>d,$
the SDE is  well-posed and the associated Fokker--Planck equation
is also well posed in the corresponding probability solution class. We refer
to Zhang~\cite{Zha11} for the corresponding result in the whole-space setting;
the same argument applies to the torus case by periodic lifting.

\subsection{The convolution-type equation}

We next consider the convolution-type coefficients covered by
Assumption~\ref{def:ass}\textnormal{(2)}. Let
$V\in L^p(\mathbb{T}^d;\mathbb{R}^d)$ for some $p>d$, and let
$\Sigma\in C^2([0,T]\times\mathbb{T}^d;\mathbb{R}^{d\times d})$. This gives
rise to the following McKean--Vlasov SDE:
\begin{align*}
\dif X_t
=
v(t,X_t)\,\dif t
+
(V*\rho_t)(X_t)\,\dif t
+
\sqrt{2}\,(\Sigma*\rho_t)(X_t)\,\dif W_t .
\end{align*}  We also assume that
the covariance matrix
$
(\Sigma*\rho)(\Sigma*\rho)^T
$
is uniformly elliptic for the densities under consideration.
This equation can be interpreted as the mean-field limit of a weakly
interacting particle system, as discussed in the Introduction. The term $v$
represents an external transport field acting on the particles, while
$V*\rho_t$ and $\Sigma*\rho_t$ describe the mean-field interaction in the
drift and diffusion coefficients, respectively. In this interpretation,
$X_t$ denotes the trajectory of a typical particle in the large-population
limit.

The above equation falls within the scope of
Theorem~\ref{thm:4converge}. Consequently, the introduction of an arbitrarily
small but sufficiently irregular external divergence-free drift field
$v\in C_tL^{d-}(\mathbb{T}^d)$ may lead to the existence of infinitely many
weak solutions starting from the stationary density $\rho_0\equiv 1$.
This regularity threshold is essentially sharp. Indeed, in the whole-space setting, with the regime
$v\in C_tL^p$ with $p>d$, one expects well-posedness under the usual
ellipticity and stability assumptions. In the additive-noise case
$\Sigma\equiv \operatorname{Id}$, the well-posedness theory   was developed in \cite[Theorem~4.3]{RZ21}. For general
distribution-dependent diffusion coefficient, we refer to  \cite[Theorem~4.1]{Zha25}.

Finally, applying Theorem~\ref{thm:sta}, one can also construct a divergence-free drift field
$
v\in L^{d-1-} 
$
for which the corresponding stationary nonlinear Fokker--Planck equation
admits non-unique stationary probability solutions.
 
\subsection{The porous-medium-type equation}

We next consider a nonlinear diffusion of porous-medium type. In   Assumption~\ref{def:ass}\textnormal{(1)}, let
$\beta:\mathbb{R}\to\mathbb{R}$ satisfy $\beta(0)=0$ and
$
0<\gamma\leq \beta'(r)\leq \gamma_1<\infty,
r\in\mathbb{R}.
$ We then choose
$
\sigma(\rho)
=
\sqrt{\frac{\beta(\rho)}{\rho}}\,\operatorname{Id},
V\equiv 0 .$
With this choice, equation \eqref{eq:fpe:i} reduces to the nonlinear
Fokker--Planck equation on $\mathbb{T}^d$
\begin{align*}
\partial_t\rho
-
\Delta\beta(\rho)
+
\operatorname{div}(v\rho)
=
0 .
\end{align*}
Here $\rho$ denotes the probability density associated with the corresponding
DDSDE
\begin{align*}
\dif X_t
=
v(t,X_t)\,\dif t
+
\sqrt{\frac{2\beta(\rho_t(X_t))}{\rho_t(X_t)}}\,\dif W_t .
\end{align*} 

The whole-space analogue of this model has been discussed in
Section~\ref{sec:rcase}. In the periodic setting, Theorems~\ref{thm:4converge}
and~\ref{thm:sta} apply to this equation. Consequently, one can construct
divergence-free drift fields $v\in C_tL^{d-}$ for which the equation admits infinitely many
weak solutions starting from the stationary density $\rho_0\equiv 1$, as well
as divergence-free drift fields for which the corresponding stationary problem
admits non-unique stationary probability solutions.

 \subsection{Keller--Segel equation}\label{sec:example:kse}

We consider aggregation equations $(\nu=0)$ and Keller--Segel-type models
$(\nu>0)$ of the form
\begin{align*}
\partial_t\rho
-
\nu\Delta\rho
+
\operatorname{div}\bigl((\nabla G*\rho)\rho\bigr)
+
\operatorname{div}(v\rho)
=
0 .
\end{align*}
The corresponding DDSDE is formally given by
\begin{align}
\dif X_t
=
v(t,X_t)\,\dif t
+
(\nabla G*\rho_t)(X_t)\,\dif t
+
\sqrt{2\nu}\,\dif W_t,
\notag
\end{align}
where $X_t$ represents the position of a moving biological cell at time $t$.
Here $G$ denotes the Green function of the Laplacian on the torus. Since the
Laplacian is not invertible on constants on $\mathbb{T}^d$, the inverse
Laplacian is understood on mean-zero functions. More precisely, $G$ is defined
by
\begin{align*}
-\Delta G=\delta_0-1,
\qquad
\int_{\mathbb{T}^d}G(x)\,\dif x=0 .
\end{align*}
Near the origin, $G$ has the same singular behavior as the classical Euclidean
potential; see, for instance, \cite{Jos19}. In particular,
\begin{align*}
|\nabla G(x)|
\lesssim
|x|^{1-d},
\qquad
d\geq 2 .
\end{align*} 

The Keller--Segel model \cite{KS70,KS71} is a fundamental equation in
chemotaxis and collective behavior, describing the motion of biological cells
or microorganisms attracted by chemical signals produced collectively by the
population itself. In this context, probability solutions are particularly
natural, since the density $\rho$ represents the distribution of particles or
cells and must therefore remain nonnegative with conserved total mass.

Keller--Segel equations with an additional incompressible transport field $v$ have
been extensively studied, mainly in connection with enhanced dissipation,
suppression of blow-up, and coupled fluid models, see for example \cite{BH17,IXZ21,HKY25}. In these works, the external
drift is typically smooth or has a special mixing structure and therefore plays
a stabilizing role. In contrast, our result focuses on the opposite mechanism:
we show that an arbitrarily small but sufficiently rough divergence-free drift
can destroy uniqueness of probability solutions.  

In the case $d=2$, one has
$
\nabla G\in L^{2,\infty}(\mathbb{T}^2;\mathbb{R}^2).
$
Therefore, by Theorem~\ref{thm:4converge}, there exists a divergence-free
drift field
$
v\in C_tL^{2-} 
$
such that the above equation admits infinitely many probability solutions
starting from the stationary density $\rho_0\equiv1$.

In the case $d\geq 3$,   since
$
G\in L^{\frac{d}{d-2},\infty},
$   the assumptions of
Theorem~\ref{thm:6converge} and Theorem~\ref{thm:sta} are satisfied with
$\gamma=2$, provided
$
1<\frac{d}{2}<d_0':=\frac{d_0}{d_0-1}$.
Consequently, for  $d_0<\frac{d}{d-2}$, there exists a
divergence-free drift field
$
v\in C_tL^{d_0} 
$
such that the corresponding equation with initial datum $\rho_0\equiv1$
admits non-unique probability solutions. Similarly, by
Theorem~\ref{thm:sta}, one can construct a time-independent divergence-free
drift field for which the stationary equation admits non-unique stationary
probability solutions.

In the Keller--Segel case with $\nu>0$, one expects well-posedness for
sufficiently regular external drifts. For instance, when
$v\in C_tL^r$ with $r>d$, the drift is
subcritical with respect to the parabolic scaling. In this regime, local
well-posedness and uniqueness of mild solutions in subcritical spaces of the
form $
C_tL^q,
\frac1q=\frac1d+\frac1r<\frac2d,
$
are expected; see, for instance, \cite{Kar99,BHN94}.
In dimension $d=3$, our construction reaches the sharp threshold suggested by
this scaling. On the one hand, for $v\in C_tL^r$ with $r>3$, one expects
uniqueness in the class
$
C_tL^q,
\frac1q=\frac13+\frac1r<\frac23 .
$
On the other hand, for every $1<r<3$, our result provides a divergence-free
drift $
v\in C_tL^r$ 
such that the equation admits multiple probability solutions in the class
$
C_tL^q,
\frac1q=1-\frac1r<\frac23 .
$
Thus the point
$
\left(\frac1r,\frac1q\right)
=
\left(\frac13,\frac23\right)
$
marks the critical threshold separating the expected uniqueness regime from
the non-uniqueness regime; see Figure~\ref{tu:sde} for an illustration.

\begin{figure}
    \centering
   
\begin{tikzpicture}[scale=4]
    \draw[->] (0,0) -- (1.1,0) node[right] {$\frac1r$};
    \draw[->] (0,0) -- (0,1.1) node[above] {$\frac1q$};

    \draw[red,thick] (0,1/3) -- (1/3,2/3);
     \draw[green,thick] (1/3,2/3) -- (1,0);

    \fill[red,opacity=0.3] (0,1/3)--(1/3,2/3) -- (1/3,0) -- (0,0) -- cycle;
    \fill[green,opacity=0.3] (1/3,1) --(1/3,2/3) --(1,0)-- (1,1) -- cycle;

    \node at  (0.7,0.6) {¬!};
    \node at (0.15,0.2) {!};

    \draw[dotted] (1/3,1) --  (1/3,0);

     \draw (1/3,2/3) node[right] {$(\frac13,\frac23)$} ;
    \draw (1,0) node[below] {$1$} -- +(0,0.02);
    
     \draw (0,1/3) node[left] {$\frac13$} -- +(0.02,0);
    \draw (0,1) node[left] {$1$} -- +(0.02,0);
    \draw (1/3,0) node[below] {$\frac13$} -- +(0,0.02);
    
    \filldraw[black] (1/3,2/3) circle (0.03mm);
\end{tikzpicture}
\caption{
Uniqueness and non-uniqueness regimes for probability solutions in
$C_tL^q$ to the three-dimensional Keller--Segel equation with external drift
$v\in C_tL^r$. \\
The red region corresponds to the expected well-posedness regime: for $r>3$
and $\frac1q\leq \frac13+\frac1r$, uniqueness is expected in the corresponding
subcritical class.  \\
The green region corresponds to the non-uniqueness regime
obtained in this paper: for $1<r<3$ and
$\frac1q\geq 1-\frac1r$, there exists a divergence-free drift
$v\in C_tL^r$ for which the equation admits multiple probability solutions.\\
The point $(\frac13,\frac23)$ marks the sharp threshold separating these
two regimes.
}
\label{tu:sde}
\end{figure}

 \subsection{The 2D vorticity Navier--Stokes equation}

We next consider the two-dimensional case $d=2$. Let
$V\in L^{2,\infty}(\mathbb{T}^2;\mathbb{R}^2)$ and take
$\sigma=\operatorname{Id}$. Then \eqref{eq:fpe:i} becomes
\begin{align*}
\partial_t\rho
-
\Delta\rho
+
\operatorname{div}\bigl((V*\rho)\rho\bigr)
+
\operatorname{div}(v\rho)
=
0 .
\end{align*}
The corresponding DDSDE is
\begin{align*}
\dif X_t
=
v(t,X_t)\,\dif t
+
(V*\rho_t)(X_t)\,\dif t
+
\sqrt{2}\,\dif W_t .
\end{align*}
In the case $v\equiv0$, the existence and conditional uniqueness of the
corresponding SDE and Fokker--Planck equation have been studied in
\cite{Kry20,Kry20b,RZZ25}.

In particular, if we choose the Biot--Savart kernel
\begin{align*}
V=\nabla^\perp G\in L^{2,\infty}(\mathbb{T}^2;\mathbb{R}^2),
\end{align*}
where  $G$ is the Green function on the torus, then the equation reduces to the
two-dimensional vorticity Navier--Stokes equation with an additional drift:
\begin{align*}
\partial_t\rho
-
\Delta\rho
+
\operatorname{div}\bigl((\nabla^\perp G*\rho)\rho\bigr)
+
\operatorname{div}(v\rho)
=
0 .
\end{align*} 
The two-dimensional vorticity Navier--Stokes equation is a fundamental model in
fluid mechanics and turbulence theory, describing the evolution of the
vorticity of an incompressible fluid. The well-posedness theory for the
two-dimensional vorticity Navier--Stokes equation in the class $C_tL^1$ is now
classical; see, for instance, \cite{MB03,BA94,GG05}.  The probabilistic formulation of the 2D vorticity
Navier--Stokes equation in terms of nonlinear McKean--Vlasov SDEs has been
studied, for instance, in \cite{BRZ25, RZZ25}.

On the other hand, by Theorem~\ref{thm:4converge}, there exists a
divergence-free drift field
$
v\in C_tL^{2-}
$
such that the above vorticity equation with the additional drift admits
infinitely many probability solutions in the class $C_tL^1$, all starting from
the stationary density $\rho_0\equiv1$.

  \subsection{Landau-type equation on $\mathbb{T}^d$, $d\geq 3$}
\label{sec:example:landau}

We next consider a Landau-type equation on the torus. Let
$
\sigma(\rho)=\sqrt{a*\rho},
V=2\div a,
$
where the matrix-valued kernel $a$ has the Coulomb-type singularity
\begin{align*}
a(x)
=
\left(
\operatorname{Id}
-
\frac{x\otimes x}{|x|^2}
\right)
|x|^{2-d}
\end{align*}
near the origin. More precisely, $a$ is understood as a periodic
matrix-valued function on $\mathbb{T}^d$ which coincides with the above
kernel in a neighborhood of the origin.  

With this choice, equation \eqref{eq:fpe:i} becomes the following
Landau-type equation:
\begin{align*}
\partial_t\rho
-
\operatorname{div}
\left[
(a*\rho)\nabla\rho
-
(a*\nabla\rho)\rho
\right]
+
\operatorname{div}(v\rho)
=
0 .
\end{align*}
The corresponding DDSDE is formally given by
\begin{align*}
\dif X_t
=
v(t,X_t)\,\dif t
+
2(a*\nabla\rho_t)(X_t)\,\dif t
+
\sqrt{2 (a*\rho_t)(X_t)}\,\dif W_t .
\end{align*} 

The Landau equation is a fundamental kinetic model describing the evolution of
dilute plasmas under Coulomb-type interactions, where the dynamics are
dominated by the accumulation of grazing collisions. Classical references
include \cite{Lan36,Vil98,Vil02}. In the genuine kinetic setting, the variable
$x$ here should be interpreted as the velocity variable. From this viewpoint,
it is not physically natural to impose periodicity in the velocity variable.
However, due to the limitations of our current method, we restrict ourselves
to the periodic setting on the torus.

Since  $
a\in
L^{\frac{d}{d-2},\infty} $,  the assumptions of Theorem~\ref{thm:6converge} and
Theorem~\ref{thm:sta} are satisfied with $\gamma=2$, provided
$
1<\frac{d}{2}<d_0',
$ i.e. $d_0<\frac{d}{d-2}$.
Consequently, there exists a divergence-free drift field
$
v\in C_tL^{d_0} 
$
 in the above range such that the corresponding Landau-type
equation admits non-unique weak solutions. Moreover, by applying
Theorem~\ref{thm:sta}, one can also construct a time-independent
divergence-free drift
for which the  equation admits non-unique
stationary probability solutions. 
 
\subsection{Further discussion}

Our approach is flexible and is expected to be adaptable to a much broader
class of diffusion mechanisms than those treated in the present paper. With
suitable modifications, one may hope to extend the construction to nonlinear
diffusion operators such as the $p$-Laplacian
$
\operatorname{div}\bigl(|\nabla\rho|^{p-2}\nabla\rho\bigr),
$
nonlocal diffusion operators such as the fractional Laplacian
$(-\Delta)^\alpha$, and genuinely degenerate porous-medium diffusions such as
$
\Delta \rho^m .
$
These operators arise naturally in a variety of physical models, including
non-Newtonian fluids, transport in heterogeneous media, and anomalous
diffusion. We do not pursue these extensions here, since they would require
additional estimates adapted to the specific structure of each diffusion
operator.

In this paper, we restrict our attention to the torus and whole-space settings.
Nevertheless, the method is expected to be adaptable to more general domains
equipped with suitable boundary conditions. A main additional ingredient would
be the use of Bogovskii operators \cite{Bog79}, which provide a convenient
tool for solving divergence equations while taking boundary conditions into
account. The presence of boundaries, however, also introduces new technical
difficulties.

\section{Notations and Preliminaries}\label{sec:notation}
 Let $T>0$, $\mN_{0}:=\mN\cup \{0\}$. Throughout the manuscript, we write  $\mT^d = \mR^d/\mZ^d$ for the $d$-dimensional flat torus.  
We employ the notation $a\lesssim b$ if there exists a constant $c>0$ such that $a\leq cb$. 
  Given a Banach space $E$ with a norm $\|\cdot\|_E$, we write $C_tE=C([0,T];E)$ for the space of continuous functions from $[0,T]$ to $E$, equipped with the supremum norm. For $p\in [1,\infty]$ we write $L^p_tE=L^p([0,T];E)$ for the space of $L^p$-integrable functions from $[0,T]$ to $E$, equipped with the usual $L^p$-norm.
For $\alpha\in(0,1)$ we  define $C^\alpha_tE$ as the space of $\alpha$-H\"{o}lder continuous functions from $[0,T]$ to $E$, endowed with the norm $\|f\|_{C^\alpha_tE}=\sup_{s,t\in[0,T],s\neq t}\frac{\|f(s)-f(t)\|_E}{|t-s|^\alpha}+\sup_{t\in[0,T]}\|f(t)\|_{E},$   and write  $C_t^\alpha$ in the case when $E=\mathbb{R}$. 
   We use $L^p$ to denote the set of  standard $L^p$-integrable functions on $\mathbb{T}^d$.
For $s>0$, $p>1$ we set $W^{s,p}:=\{f\in L^p; \|(I-\Delta)^{\frac{s}{2}}f\|_{L^p}<\infty\}$ with the norm  $\|f\|_{W^{s,p}}=\|(I-\Delta)^{\frac{s}{2}}f\|_{L^p}$.   We use $L^{p,\infty}$ to denote the Lorentz space
\begin{align*}
\|f\|_{L^{p,\infty}}
:=
\sup_{\lambda>0}
\lambda
\,|\{x\in\mathbb{T}^d:\ |f(x)|>\lambda\}|^{1/p}.
\end{align*}
For $N\in \N_0 $, $C_x^N$ denotes the space of $N$-times differentiable functions equipped with the norm
	$$
	\|f\|_{C_x^N}:=\sum_{\substack{|\alpha|\leq N, \alpha\in\N^{d}_{0} }}\| D^\alpha f\|_{ L^\infty_x}.$$ Similarly, if the norm is taken in space-time, we use $C^{N}_{t,x}$. 
  
   For a matrix-valued function
$
A=(A^{ij})_{1\leq i,j\leq d},
$
we define
$
(\div A)_i:=\sum_{j=1}^d\partial_jA^{ij}.
$

We define  the projection  $\mathbb{P}_{\neq0} f := f -\int_{\mathbb{T}^d} f\dif x$.
We denote by $\dif x$ the Lebesgue measure on the torus.
 We denote by $\mathcal{L}$ the law of  random variable.

\subsection{Some technical tools}\label{tamr}
We collect some technical tools used in the construction of convex integration schemes. 

 We define $\mathcal{R}_1:=\nabla\Delta^{-1}$ as a right
inverse of the $\div$ operator, i.e. $\div(\mathcal{R}_1v) = v$ for scalars $v$ with $\int_{\mathbb{T}^d} v\dif x = 0$.   Here we use the notation  $\mathcal{R}_1v:=\mathcal{R}_1(v-\int v\dif x)$ for a general scalar function $v$. Then  since $\mathcal{R}_1$ is a Calder\'on-Zygmund operator, we have 
\bl\label{bb_1}
 Let $1\leq p \leq\infty$. For any vector field 
$f \in C_0^\infty(\mathbb{T}^d; \mathbb{R})$, $\sigma\in\mathbb{N}$,
$$\|\mathcal{R}_1f(\sigma\cdot)\|_{ L^p}\lesssim\sigma^{-1} \| f\|_{ L^p}.$$
\el
We introduce the  bilinear version $\mathcal{B}_1: C^\infty(\mathbb{T}^d; \mathbb{R}) \times C_0^\infty(\mathbb{T}^d; \mathbb{R})\to C^\infty(\mathbb{T}^d; \mathbb{R}^{d})$  by $$\mathcal{B}_1(v,f)=v\mathcal{R}_1f-\mathcal{R}_1\(\nabla v\cdot\mathcal{R}_{1}f+\int vf\dif x\).$$

\bl$($\cite[Lemma 3.3]{BCDL21}$)$\label{bb1_1}
Let $1 \leq p \leq\infty$. For any $v \in C^\infty(\mathbb{T}^d; \mathbb{R})$ and $f\in C_0^\infty(\mathbb{T}^d; \mathbb{R})$, we have $\div(\mathcal{B}_1(v,f))=vf-\int_{\mathbb{T}^d}vf\dif x,$ and  for $\sigma\in\mathbb{N}$,
$$\|\mathcal{B}_1(v,f(\sigma\cdot))\|_{W^{k,p}}\lesssim\sigma^{k-1}\|v\|_{C^{k+1}}\|f\|_{W^{k,p}}.$$
\el

 Then we introduce the improved H\"older’s inequality  by using the additional decorrelation between frequencies.
\bl$($\cite[Theorem B.1]{CL22}$)$\label{ihiot}
Let $d\geq2,p \in [1, \infty]$ and $a, f : \mathbb{T}^d \to \mathbb{R}$ be smooth functions. Then for any $\sigma\in\mathbb{N}$,
$$| \|a f(\sigma\cdot)\|_{L^p}-\|a\|_{L^p}\|f\|_{L^p} |\lesssim\sigma^{-1/p}\|a\|_{C^1}\|f\|_{L^p}.$$
\el

Finally, we record the following weak Young inequality.

\begin{lemma} \label{lem:weak-young}
Let $d\geq 2$ and let $1<p,q,r<\infty$ satisfy
$
\frac1q=\frac1p+\frac1r-1>0 .
$
Then, for all $f\in L^p(\mathbb{T}^d)$ and
$g\in L^{r,\infty}(\mathbb{T}^d)$, one has
\begin{align}
\|f*g\|_{L^q(\mathbb{T}^d)}
\lesssim
\|f\|_{L^p(\mathbb{T}^d)}
\|g\|_{L^{r,\infty}(\mathbb{T}^d)} .
\label{eq:hls}
\end{align}
\end{lemma}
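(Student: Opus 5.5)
The approach is to transfer the classical weak Young (Hardy--Littlewood--Sobolev-type) inequality from $\mathbb{R}^d$ to $\mathbb{T}^d$ by real interpolation. On the torus we identify functions with $\mathbb{Z}^d$-periodic functions and write $f*g(x)=\int_{\mathbb{T}^d}f(x-y)g(y)\,\dif y$. Fix $g\in L^{r,\infty}(\mathbb{T}^d)$. The operator $T_g f:=f*g$ is linear in $f$, so it suffices to prove the strong-type bound \eqref{eq:hls} via the Marcinkiewicz interpolation theorem. For this we need weak-type estimates at two exponent pairs $(p_0,q_0)$ and $(p_1,q_1)$ lying on the line $\frac1q=\frac1p+\frac1r-1$, chosen on either side of $(p,q)$ with $p_0<p_1$ and $q_0<q_1$. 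Since $1<p<r'$ (which follows from $\frac1q>0$), both admissible pairs exist.

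The weak-type estimate $\|f*g\|_{L^{q,\infty}}\lesssim \|f\|_{L^{p}}\|g\|_{L^{r,\infty}}$ for every admissible $p$ comes from the standard level-set splitting. For a threshold $\lambda>0$, decompose $g=g\1_{|g|>\lambda}+g\1_{|g|\le\lambda}=:g_1+g_2$.

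1. Using the distribution function of $g$, one computes that $\|g_1\|_{L^1}\lesssim \lambda^{1-r}\|g\|_{L^{r,\infty}}^r$, since $r>1$.
2. Likewise $\|g_2\|_{L^{p'}}\lesssim \lambda^{1-r/p'}\|g\|_{L^{r,\infty}}^{r/p'}$, since $p'>r$.
3. Young's inequality on the torus gives $\|f*g_1\|_{L^p}\le \|f\|_{L^p}\|g_1\|_{L^1}$.
4. Hölder's inequality gives $\|f*g_2\|_{L^\infty}\le\|f\|_{L^p}\|g_2\|_{L^{p'}}$.

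Normalize $\|f\|_{L^p}=\|g\|_{L^{r,\infty}}=1$. Given a level $t$, choose $\lambda$ so that $\|f*g_2\|_{L^\infty}\le t/2$. Chebyshev's inequality applied to $f*g_1$ then yields $|\{|f*g|>t\}|\lesssim t^{-p}\lambda^{(1-r)p}$. Substituting $\lambda\sim t^{1/(1-r/p')}$ produces $t^{-q}$, and the exponent bookkeeping matches $\frac1q=\frac1p+\frac1r-1$. This bound is only informative for large $t$. For small $t$ the trivial bound $|\mathbb{T}^d|=1$ suffices, and since the torus has finite measure this causes no problem: when $\lambda$ is small, $g_2$ is bounded and the estimate remains valid. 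Marcinkiewicz interpolation between the two endpoints then upgrades the weak-type bound to the strong-type bound \eqref{eq:hls}, and the implicit constant depends only on $p,q,r,d$.

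The main obstacle is the exponent bookkeeping in the weak-type step, in particular confirming that the chosen $\lambda(t)$ makes both the $L^1$ piece and the $L^{p'}$ piece compatible on the full range. There is also the degenerate regime in which $\lambda$ falls below the typical size of $g$; there the torus-specific trivial bound should be used rather than the Euclidean scaling. An alternative, shorter route is to extend $g\1_{[-1/2,1/2)^d}$ by zero to $\mathbb{R}^d$. One then checks that its $L^{r,\infty}(\mathbb{R}^d)$ norm is at most $\|g\|_{L^{r,\infty}(\mathbb{T}^d)}$, and that $f*g$ on $[-\tfrac12,\tfrac12)^d$ is controlled by the Euclidean convolution with $f\1_{[-1,1]^d}$ (using periodicity, $\|f\|_{L^p([-1,1]^d)}\lesssim\|f\|_{L^p(\mathbb{T}^d)}$). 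The classical weak Young inequality on $\mathbb{R}^d$ (e.g. Grafakos) then applies directly, and I would present this reduction as the proof.
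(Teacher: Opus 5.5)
The paper gives no proof of this lemma; it records it as the classical weak-type Young (O'Neil) inequality and uses it without argument. So the only question is whether your proposal is sound, and it is.

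Your first route is the standard textbook proof of Young's inequality for weak-type spaces (as in Grafakos' \emph{Classical Fourier Analysis}). That proof uses only the group structure and the invariance of Haar measure, so it works verbatim on $\mathbb{T}^d$. The individual steps all check out:
\begin{itemize}
\item $\|g\mathbf{1}_{\{|g|>\lambda\}}\|_{L^1}\lesssim\lambda^{1-r}$.
\item $\|g\mathbf{1}_{\{|g|\le\lambda\}}\|_{L^{p'}}\lesssim\lambda^{1-r/p'}$, which uses $r<p'$; this is equivalent to $\frac1q>0$.
\item The choice $\lambda\sim t^{1/(1-r/p')}$ together with the identity $\frac{r}{1-r+r/p}=\frac{pr}{p+r-pr}=q$ gives the weak-type bound.
\item The off-diagonal Marcinkiewicz theorem applies because $q>p$ along the whole line $\frac1q=\frac1p-\frac1{r'}$. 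Its constant $\lesssim A_0^{1-\theta}A_1^{\theta}$ is $\lesssim\|g\|_{L^{r,\infty}}$, since both weak-type constants are.
\end{itemize}

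One correction: no split into large and small $t$ is needed, and the ``degenerate regime'' you worry about does not arise. The bound $|\{|g|>s\}|\le s^{-r}\|g\|_{L^{r,\infty}}^r$ holds for every $s>0$. Hence both piece estimates, and so $|\{|f*g|>t\}|\lesssim t^{-q}$, hold for every $t>0$ with the same constant. The finiteness of $|\mathbb{T}^d|$ is never used.

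Your alternative reduction is also correct, and in fact exact. For $x,y\in[-\frac12,\frac12)^d$ one has $x-y\in(-1,1)^d$. So on the fundamental cell the periodic convolution coincides with the Euclidean convolution $(f\mathbf{1}_{[-1,1]^d})*(g\mathbf{1}_{[-\frac12,\frac12)^d})$. Here $\|f\mathbf{1}_{[-1,1]^d}\|_{L^p(\mathbb{R}^d)}=2^{d/p}\|f\|_{L^p(\mathbb{T}^d)}$, and the weak-$L^r$ quasi-norm of $g$ is unchanged.

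That route is shorter but takes the Euclidean theorem as a black box, whereas the interpolation argument is self-contained. Either is an adequate justification for a lemma the paper only cites.
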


\section{Infinitely many evolutions from a stationary state}\label{cogpss3}
In this section, we prove our main result, Theorem~\ref{thm:4converge}. 
We first establish the first statement at the PDE level. 
For any fixed diffusion coefficient $\sigma$ satisfying Assumption~\ref{def:ass} and \eqref{eq:divdiv}, we construct a distribution-independent divergence-free drift $v \in L^r_tL^p \cap C_tL^s$ 
such that the corresponding nonlinear Fokker--Planck equation \eqref{eq:fpe:i} admits infinitely many distinct positive solutions $\rho^i \in C_tL^1$, satisfying moreover $v\rho^i \in L^1_tL^1.$

To achieve this, we apply a convex integration scheme to the Fokker--Planck equation \eqref{eq:fpe:i}. 
The overall iterative procedure follows the framework developed in previous works such as \cite{LRZ25,LR25}. 
However, the present setting requires handling general diffusion operators together with the construction of infinitely many distinct solutions. 
Accordingly, in the sequel we mainly focus on the new ideas and techniques specific to this setting, while only briefly discussing those parts of the argument that are analogous to earlier works.  

\subsection{Preparations} 
From now on, we write
$
A(\rho):=\sigma\sigma^T(\rho).
$
The following lemma collects several estimates that will be used later.

\begin{lemma}\label{lem:A}
Under Assumption~\ref{def:ass}, it holds that
\begin{align}
\|A(\rho)&-A(\tilde{\rho})\|_{C_{t,x}^1}
\lesssim
   \|\rho- \tilde{\rho} \|_{C_{t,x}^1}+\|\rho- \tilde{\rho} \|_{C_{t,x}^0}(\|\rho \|_{C_{t,x}^1}+\|\tilde\rho \|_{C_{t,x}^1}),
\notag\\
\|A(\rho)\|_{C_{t,x}^0}
&\lesssim 1,
\notag\ \
\|A(\rho)\|_{C_{t,x}^1}
\lesssim 1+\|\rho\|_{C_{t,x}^1},\ \ \|A(\rho)\|_{C_{t,x}^2}
\lesssim 1+\|\rho\|_{C_{t,x}^1}^2+\|\rho\|_{C_{t,x}^2}.
\notag
\end{align}
\end{lemma}

\begin{proof}
We first consider case \textnormal{(1)}. Since
$
\sigma(t,x,\rho)=\sigma_1(t,x,\rho_t(x))
$
and
$
\sigma_1\in C_b^2([0,T]\times\mathbb{T}^d\times\mathbb{R}),
$
we have for $0\leq i\leq d$ ($\partial_0$ is regarded as the derivative  to  time  $t$), 
\begin{align*}
|\partial_i\sigma(t,x,\rho)&-\partial_i\sigma(t,x,\tilde{\rho})|\\
&\lesssim|(\partial_i\sigma_1)(t,x,\rho)-(\partial_i\sigma_1)(t,x,\tilde{\rho})|+|(\partial_\rho\sigma_1)(t,x,\rho)\partial_i\rho-(\partial_\rho\sigma_1)(t,x,\tilde{\rho})\partial_i\tilde\rho|\\
&\lesssim 
\|\rho- \tilde{\rho} \|_{C_{t,x}^1}+\|\rho- \tilde{\rho} \|_{C_{t,x}^0}\|\rho \|_{C_{t,x}^1},
\\
\|\sigma(t,x,\rho)\|_{C_{t,x}^1}
&\lesssim
1+\|\rho\|_{C_{t,x}^1},\\
\|\sigma(t,x,\rho)\|_{C_{t,x}^2}
&\lesssim
1+\|\rho\|_{C_{t,x}^1}^2+\|\rho\|_{C_{t,x}^2}.
\end{align*}

Next, in case \textnormal{(2)}, it suffices to consider the case that $\sigma(\rho)=\sigma_2*\rho$. Young's inequality yields
\begin{align*}
\|\sigma(t,x,\rho)-\sigma(t,x,\tilde{\rho})\|_{C_{t,x}^1}
&\lesssim
\|\sigma_2\|_{C_{t}^1C_{x}^0}
\|\rho_t-\tilde{\rho}_t\|_{C_{t,x}^1},
\\
\|\sigma(t,x,\rho)\|_{C_{t,x}^0}
&\lesssim
\|\sigma_2\|_{C_{t,x}^0}\|\rho_t\|_{L^1}
\lesssim 1,
\\
\|\sigma(t,x,\rho)\|_{C_{t,x}^i}
&\lesssim
1+\|\rho\|_{C_{t,x}^i},\ i=1,2.
\end{align*}

Case \textnormal{(3)} follows directly by combining the estimates from the previous two cases.
The corresponding estimates for $A$ then follow immediately.
\end{proof}

\subsection{The iteration procedure}
Let $0<\epsilon_0\leq 1$ be given. Without loss of generality, we may replace $\epsilon_0$ by
$
\epsilon_0\wedge \frac14.
$ The iteration is indexed by a parameter
$
q\in\mathbb{N}_0.
$
We consider an increasing sequence
$
\{\lambda_q\}_{q\in\mathbb{N}_0}\subset\mathbb{N}
$
and a sequence
$
\{\delta_q\}_{q\in\mathbb{N}_0}\subset(0,1]
$
defined by
\begin{align*}
\lambda_q
=
a^{b^q},
\ 
q\geq0,
\ 
\delta_q
=
(\frac{\epsilon_0}{2})^{d+1}
\lambda_1^{2\beta}
\lambda_q^{-2\beta},
\ 
q\geq1,
\ 
\delta_0=1,
\end{align*}
where $\beta>0$ will be chosen sufficiently small, while $a,b$ will be chosen sufficiently large.
Moreover, by imposing
\begin{align}
a^{2(b-1)\beta/(d+1)}>2,
\label{ieq:4ab2}
\end{align}
we obtain
\begin{align*}
\sum_{q\geq1}\delta_q^{1/(d+1)}
&\leq
\frac{\epsilon_0}{2}
\sum_{q\geq1}
a^{(1-q)2(b-1)\beta/(d+1)}
\leq
\frac{\epsilon_0}{2}
\cdot
\frac1{1-a^{-2(b-1)\beta/(d+1)}}
<
\epsilon_0.
\end{align*}

Without loss of generality, we assume $T=1$ from now on. 
At each step $q$, we construct a family
$(v_q,\rho_q^i,M_q^i)_{i\in\mathbb{N}}$
solving the system
\begin{align}\label{eq:4qth}
\partial_t\rho_q^i
-
\div\div\bigl(A(\rho_q^i)\rho_q^i\bigr)
+
\div(v_q\rho_q^i)+\div((V*\rho_q^i)\rho_q^i)
&=
-\div M_q^i,
\\
\div v_q&=0.
\notag
\end{align}
The sequence $\{\rho_q^i\}_{q\in\mathbb{N}_0}$ converges to a limit $\rho^i$, while the stress fields $\{M_q^i\}_{q\in\mathbb{N}_0}$ converge to $0$ in a suitable topology as $q\to\infty$.

To incorporate the initial condition, we require that
$\rho_q^i=1$ on $[0,T_q]$, where
$
T_q:=\frac13-\sum_{1\leq r\leq q}\delta_r^{1/2}.
$
By \eqref{ieq:4ab2}, we have
$
0<T_q\leq \frac13.
$
Here and throughout the paper, we use the convention
$
\sum_{1\leq r\leq0}:=0.
$

Let $\alpha\in(0,1)$ be fixed later. 
For each $q\in\mathbb{N}_0$, we define
$
N_q:=\min\bigl\{i\in\mathbb{N}:2^i\geq\lambda_q^\alpha\bigr\},
$
which satisfies
$
\lambda_q^\alpha
\leq
2^{N_q}<2\lambda_q^\alpha.$
At the iteration step $q+1$, we construct new perturbations to eliminate the first $N_{q+1}$ stress terms
$M_q^i$, $1\leq i\leq N_{q+1}$,
while ensuring that the remaining stress terms remain sufficiently small.

We initialize the iteration by setting
\begin{align*}
\rho_0^i(t,x)
&=
1+\frac{\sin2\pi x_1}{4^i}\chi_0(t),
 \ 
v_0=\overline v,\\
M_0^i(t,x)
&=
\partial_t\chi_0(t)\frac{\cos2\pi x_1}{4^i\cdot 2\pi}(1,0,\dots,0)
-\overline v(\rho_0^i-1)\\
&\ +
\div(A(t,x,\rho_0^i) \rho_0^i-A(t,x,1))-(V*\rho_0^i)\rho_0^i+V*1,
\end{align*}
where $x=(x_1,\dots,x_d)$ and $\chi_0$ is a smooth function satisfying
$
\chi_0(t)=0 $ on $
 [0,\frac13 ], \chi_0(t)=1$ on $[\frac23,1 ].
$   It is easy to verify that $(v_0,\rho^{i}_0,  M^{i}_0)_{i\in\mN}$ is a solution to \eqref{eq:4qth} by using the facts that $\div \overline v=0$, $\div\div(A(t,x,1))=0$ and $\div(V*1)=0$. By definition, we know that $\rho_0^i=1$ on $[0,T_0]$, and then $M_0^i=0$ on $[0,T_0]$.

Then   we have by weak Young's inequality
\begin{align*}
  \|  \div(A(t,x,\rho_0^i) \rho_0^i-A(t,x,1))\|_{C_{t,x}^0}&\lesssim \|   A(t,x,\rho_0^i) -A(t,x,1)\|_{C_{t,x}^1}+\|  A(t,x,\rho_0^i) (\rho_0^i-1) \|_{C_{t,x}^1}\\
  &\lesssim \|\rho_0^i-1 \|_{C_{t,x}^1}\lesssim 4^{-i},\\
\|(V*\rho_0^i)\rho_0^i-V*1\|_{L_{t}^1L^1}&\lesssim \|(V*\rho_0^i-V*1)\rho_0^i\|_{L_{t}^1L^1}+\|( V*1)(\rho_0^i-1)\|_{L_{t}^1L^1}\\
&\lesssim \|\rho_0^i-1 \|_{C_{t,x}^0}\lesssim 4^{-i}.
\end{align*} By choosing $C_0>0$ sufficiently large (depending on $\epsilon_0$), we obtain by Lemma \ref{lem:A}
\begin{align}
\| \rho_0^i-1\|_{C_{t,x}^2}
& 
\leq
4^{-i}C_0^{1/d_0'},\ \ \|v_0\|_{C_{t,x}^1}\leq C_0^{1/d_0},
\ \
\|M_0^i\|_{L_t^1L^1}
\lesssim
4^{-i}
\leq
(\frac{\epsilon_0}{2})^{d+1}4^{-i}C_0.
\label{bd:4rho_0,M_0}
\end{align}

With the above assumptions in hand, our main iteration relies on the first step of iteration and reads as follows:

\bp\label{prop:case4} 
Under the assumption of Theorem \ref{thm:4converge}, there exist $d+1>d_0>2>d_0'>1$ with $\frac{1}{d_0}+\frac{1}{d_0'}=1$ and a choice of parameters $a,b,\alpha,\beta$  such
that the following holds: Let $(v_q,\rho^{i}_q,  M^{i}_q)_{i\in\mN}$ be a solution to the system \eqref{eq:4qth} satisfying  $\int\rho^{i}_q\dif x=1$,
\begin{align}\label{bd:4vql2}
\|v_q\|_{L^{d_0}_tL^{d_0}}\leq C_vC_0^{1/d_0}\sum_{m=0}^q\delta_{m}^{1/d_0},
\end{align}
for some universal constant $C_v\geq1$, and
\begin{align}
\|v_q\|_{C_{t,x}^1}\leq C_0^{1/d_0}\lambda_q^{d+4},\
  \|\rho^{i}_{q}-1\|_{C_{t,x}^1}+\lambda_q^{-2}\|\rho^{i}_{q}-1\|_{C_{t,x}^2}\leq 4^{-i}C_0^{1/d_0'}\lambda_q^{d+4},
\label{bd:4rhoqc1}\\
 \|M^{i}_q\|_{L^1_tL^1}\leq C_02^{-i}\delta_{q+1},&\label{bd:4rql1}\\
 M^{i}_{q}=M^{i}_{0}-(v_{q}-v_0)(\rho^{i}_0-1),\ \rho_q^{i}=\rho_0^{i}, \ {\rm for\ }  i>N_{q},\label{bd:4mqi>nq}\\
 \rho^{i}_q(t)=1, M^{i}_q(t)=0\ {\rm on}\ [0,T_q].&\label{bd:4mq=rhoq=0}
\end{align}
Then there exists 
$(v_{q+1},\rho^{i}_{q+1},  M^{i}_{q+1})_{i\in\mN}$ which solves \eqref{eq:4qth} and satisfies \eqref{bd:4vql2}-\eqref{bd:4mq=rhoq=0} at the level $q+1$ and
\begin{align}
\|v_{q+1}-v_{q}\|_{L^{d_0}_tL^{d_0}}\leq
C_vC_0^{1/d_0}\delta_{q+1}^{1/d_0},\label{bd:4vq+1-vqldd}\ \
\|\rho^{i}_{q+1}-\rho^{i}_{q}\|_{L^{d_0'}_tL^{d_0'}}\leq
C_v C_0^{1/{d_0'}}2^{-i/d_0'}\delta_{q+1}^{1/{d_0'}}.
\end{align}
Moreover, we have for some $\epsilon>0$ small enough
\begin{align}
\|v_{q+1}-v_{q}\|_{L^{r}_tL^{p}}+\|v_{q+1}-v_q\|_{C_tL^s}\leq
\delta_{q+1}^{1/d_0},\label{bd:4vq+1-vqlpr}\\
\|\rho^{i}_{q+1}-\rho^{i}_{q}\|_{C_tL^{1+\epsilon}}+ \|\rho^{i}_{q+1}-\rho^{i}_{q}\|_{L_t^1W^{1,1+\epsilon}}\leq
4^{-i}\delta_{q+1}^{1/d_0'},\ \  \inf_{t\in [0,1]}(\rho^{i}_{q+1} - \rho^{i}_q)\geq - \delta_{q+1}^{1/d_0'} .\label{bd:4rhoq+1-rhoql1}
\end{align}
\ep

 The proof of our main iteration Proposition \ref{prop:case4} will be stated in the following section.

\subsection{Proof of   Theorem \ref{thm:4converge}}\label{sec:proofmaireuslt}
  
We intend to start the iteration from 
$(v_0, \rho^{i}_0,  M^{i}_0)_{i\in\mN}$ which are defined as above. 
By \eqref{bd:4rho_0,M_0},  \eqref{bd:4vql2}-\eqref{bd:4mq=rhoq=0} are satisfied as $\delta_0=1,\delta_1=(\frac{\epsilon_0}{2})^{d+1}$.

Next, we use Proposition \ref{prop:case4} to build inductively $(v_q,\rho^{i}_q, M^{i}_q)_{i\in\mN}$ for every $q \geq 1$. By \eqref{ieq:4ab2} and \eqref{bd:4vq+1-vqldd}-\eqref{bd:4rhoq+1-rhoql1}, the sequence $\{(v_q,\rho^{i}_q)_{i\in\mN}\}_{q\in \N}$ is
Cauchy in 
\begin{align*}
    \(L^r([0,1];L^p)\cap L^{d_0}([0,1]\times \mT^d)\cap C([0,1];L^s)\)\times \(L^{d_0'}([0,1]\times \mT^d)\cap C([0,1];L^{1+\epsilon})\cap L^1([0,1];W^{1,1+\epsilon})\)^{\mN}\end{align*}
     and we denote by $(v,\rho^{i})$ its limit.    
By \eqref{bd:4rql1}, \eqref{bd:4mq=rhoq=0} and a similar calculation as \eqref{arhorho-arhorho}, it is easy to verify $(\rho^{i},v)$  solves   \eqref{eq:fpe:i}.  Then we know that $\int \rho^{i}\dif x=1$.

 By \eqref{ieq:4ab2}, \eqref{bd:4vq+1-vqldd} and \eqref{bd:4rhoq+1-rhoql1} we have
\begin{align} 
 \left|   \|\rho^i-1\|_{C_tL^1}- \| \rho^{i}_0-1\|_{C_tL^1}\right|\leq \sum_{q=0}^\infty\|\rho^{i}_{q+1}-\rho^{i}_{q}\|_{C_tL^1}\leq
4^{-i}\sum_{q=0}^\infty\delta_{q+1}^{1/{d_0'}}\leq 4^{-i-1},\label{rhoi-rhoi0}\\
\inf_{t\in [0,1]}\rho^i \geq \inf_{t\in [0,1]} \rho_0^i +
\sum_{q=0}^\infty \inf_{t\in [0,1]}(\rho_{q+1}^i- \rho_q^i) \geq1-\frac14-\sum_{q=0}^\infty \delta_{q+1}^{1/d_0'}\geq \frac12,\notag
\end{align}
at which point, together with the fact that $\| \rho^{i}_0-1\|_{C_tL^1}=\frac{2}{\pi4^i}$, we obtain $\rho^i\to 1$ in $ C_tL^1$ as $i\to\infty$, and  
 $\rho^{i}$ is nonnegative on $\mathbb{T}^d$. Moreover, for $i>j$, it holds that 
 \begin{align*}
     \| \rho^{i}_0-\rho^{j}_0\|_{C_tL^1}\geq \frac{3}{2\pi4^j},
 \end{align*}
 which together with \eqref{rhoi-rhoi0} implies that $\rho^i$
 do not coincide with each other. 

 By \eqref{ieq:4ab2} and \eqref{bd:4vq+1-vqlpr} we obtain that  \begin{align*}
    \|v -\overline v \|_{L^{r}_tL^{p}}+\|v -\overline v \|_{C_tL^s}\leq  \sum_{q\geq0}( \|v_{q+1}-v_{q}\|_{L^{r}_tL^{p}}+\|v_{q+1}-v_q\|_{C_tL^s})\leq
\sum_{q\geq0}\delta_{q+1}^{1/d_0}\leq\epsilon_0.
\end{align*}
 We finish the proof of the first statement. 

 For the second statement,
by \eqref{bd:4rhoqc1}, \eqref{bd:4vq+1-vqldd} and interpolation, we have $|v| \in L_{t}^{d_0(1+\epsilon)}L^{d_0(1+\epsilon)}$ for some $\epsilon > 0$ sufficiently small. Since $\rho^i \in L_{t}^{d_0'}L^{d_0'}$, we deduce that $|v|^{1+\epsilon} \rho^i \in L_{t}^{1}L^{1}$.
Moreover, since
$
t\mapsto \rho^i(t)
$
is continuous on $[0,1]$, we may apply the superposition principle to the linear diffusion operator
\begin{align*}
L_{\rho^i}
:=
 \sigma\sigma^T(\rho^i):\nabla^2 
+
(v+V*\rho^i)\cdot\nabla,
\end{align*}
where the density $\rho^i$ is regarded as fixed.

More precisely, let $C([0,1];\mathbb{T}^d)$ denote the space of continuous paths, equipped with its Borel $\sigma$-algebra and the natural filtration generated by the canonical process
$
\Pi_t$, $t\in[0,1],
$
defined by
$
\Pi_t(\omega):=\omega(t),\ 
\omega\in C([0,1];\mathbb{T}^d).
$
By the superposition principle \cite[Section~7.2]{Tre14}, there exists a probability measure $\mathbf{Q}^i$ on $C([0,1];\mathbb{T}^d)$ which is a martingale solution associated with $L_{\rho^i}$ in the sense that, for every smooth function $f$ on $\mathbb{T}^d$, the process
\begin{align*}
f(\Pi_t)
-
f(\Pi_0)
-
\int_0^t
L_{\rho^i}f(\Pi_s)\,\dif s
\end{align*}
is a $\mathbf{Q}^i$-martingale. Moreover,
$
\dif\mathbf{Q}^i\circ \Pi_t^{-1}
=
\rho^i(t)\,\dif x,
\ 
t\in[0,1].
$

Since $\{\rho^i\}_{i\in\mathbb{N}}$ is a family of distinct solutions to \eqref{eq:fpe:i}, it follows that $\{\mathbf{Q}^i\}_{i\in\mathbb{N}}$ is a family of distinct martingale solutions.

\section{Proof of Proposition \ref{prop:case4}}\label{sec:convex}
In this section, we extend the convex integration scheme to a system of infinitely many equations in order to prove Proposition~\ref{prop:case4}. 
At step $q+1$, we focus on the first $N_{q+1}$ Fokker--Planck equations in \eqref{eq:fpe:i}, and construct new perturbations to eliminate the corresponding stress terms. 
At the same time, the remaining equations with indices larger than $N_{q+1}$ are left unchanged and shown to remain sufficiently small so as to satisfy the required estimates at level $q+1$.

\subsection{Mollification}\label{sec:mpll4}
For a sufficiently small parameter $\alpha\in(0,1)$ to be chosen later, we define
$
l:=\lambda_{q+1}^{-7\alpha/4}\lambda_q^{-3d/2-6}.
$
Then
\begin{align}
l^{-1}
\leq
\lambda_{q+1}^{2\alpha},
 \ \
l\lambda_q^{3d+12}
\leq
\lambda_{q+1}^{-3\alpha/2},\ \ \lambda_{q+1}^{-\alpha/2}
\ll
\delta_{q+2},
\label{para42}
\end{align}
provided that
\begin{align}
\alpha b>6d+24,
\ \
\alpha>4\beta b.
\label{alphab>}
\end{align}

To avoid the loss of derivatives, we first mollify the first $N_{q+1}$ equations.
Let
$
\phi_l:=\frac1{l^d}\phi\bigl(\tfrac{\cdot}{l}\bigr)
$
be a family of standard radial mollifiers on $\mathbb{R}^d$, and let
$
\varphi_l:=\frac1l\varphi\bigl(\tfrac{\cdot}{l}\bigr)
$
be a family of standard mollifiers supported in $(0,1)$.
We define  
\begin{align}
v_l=(v_{q }*_x\phi_l)*_t\varphi_l,\ \ 
 \rho_l^i=(\rho_{q}^i*_x\phi_l)*_t\varphi_l,\ \ M_l^i=(M_{q }^i*_x\phi_l)*_t\varphi_l.\notag
\end{align}
  For the mollification around $t = 0$,  since $ \rho_q^i$ and $M_q$ are constants around $t = 0$, we  can directly extend these definitions  to $t\leq 0$ by their values at $t=0$. For $v_q$, we also extend the definition to $t\leq0$ by their values at $t=0$.
Since $\rho_q^i$ and $M_q^i$ are constant near $t=0$, this causes no problem.
Moreover,    we know that $ \rho_l^i=1 $ and $ M_l^i=0$ on $[0, T_{q+1}]$.
  
By straightforward calculations  we obtain 
\begin{align}
\partial_t \rho_l^i-\div\div(A(\rho_l^i)\rho_l^i)+\div(v_l \rho_l^i)+\div((V*\rho_l^i)\rho_l^i)&=-\div (M^i_l+M^i_{com}),\ \ \div v_l=0,\label{eq:4v_l}
\end{align} where
 \begin{align*}
M_{com}^i&:=-v_l \rho^i_l+(v_{q } \rho^i_{q})*_x\phi_l*_t\varphi_l+\div(A(\rho_l^i) \rho_l^i)-(\div(A(\rho_q^i) \rho_q^i))*_x\phi_l*_t\varphi_l\\
&\ \ \ \   -(V*\rho_l^i)\rho_l^i+((V*\rho_q^i)\rho_q^i)*_x\phi_l*_t\varphi_l.
\end{align*} 
 Moreover,    we know that   $ M_{com}^i=0$ on $[0, T_{q+1}]$.

Finally, by the standard mollification estimates, the space-time Sobolev embedding
$
W^{d+\frac43,1}\hookrightarrow L^\infty,
$
and \eqref{bd:4rql1}, we obtain for every $N\geq0$,
\begin{align}
\|M_l^i\|_{C_{t,x}^N}
&\lesssim
l^{-d-\frac43-N}
\|M_q^i\|_{L_t^1L^1}
\lesssim
C_02^{-i}l^{-d-\frac43-N}.
\label{bd:4mlcn}
\end{align}

\subsection{Construction of $v_{q+1}$}\label{sec:4defq+1}
As discussed above, we apply the convex integration scheme only to the first $N_{q+1}$ equations. 
To construct the perturbations for $v_q$ and $\rho_q^i$, $1\leq i\leq N_{q+1}$, we employ the $L^{d_0}$-based building blocks introduced in Appendix~\ref{gij}, for some exponent $d_0>1$ to be specified later. 
We also recall the parameters
$
\lambda,\ r_\perp,\ r_\parallel,\ \eta,\ \mu,\ \sigma
$
introduced in Appendix~\ref{gij}. Their precise choice will be specified subsequently.

Let
$
\chi\in C_c^\infty(-\frac34,\frac34)
$
be a nonnegative function such that
$
\sum_{n\in\mathbb{Z}}\chi(t-n)=1
$
for every $t\in\mathbb{R}$.
Let
$
\tilde{\chi}\in C_c^\infty(-\frac45,\frac45)
$
be a nonnegative function satisfying
$
\tilde{\chi}=1
$
on
$
[-\frac34,\frac34]
$
and
$
\sum_{n\in\mathbb{Z}}\tilde{\chi}(t-n)\leq2.
$

For $1\leq i\leq N_{q+1}$, we fix the parameters
$
\zeta^i:=20\cdot2^i\delta_{q+2}^{-1},
$
and consider the two disjoint sets
$
\Lambda^1,\Lambda^2
$
defined in Appendix~\ref{gij}. 
We abuse the notation
$
\Lambda^i=\Lambda^1
$
for odd $i$, and
$
\Lambda^i=\Lambda^2
$
for even $i$.

We take
$
K=N_{q+1}
$
in the construction of Appendix~\ref{gij}, so that
\begin{align}
\eta N_{q+1}\ll1.
\label{etan}
\end{align}
We then introduce a family of pairwise disjoint functions
$
g_{(\xi,i,d_0)}
$
and
$
g_{(\xi,i,d_0')}
$
for
$
\xi\in\Lambda
$
and
$
1\leq i\leq N_{q+1}.
$

With these preparations, we define the rescaled building blocks by
\begin{align*}
W_{(\xi,n,i)}(x,t)
&:=
W_{(\xi,d_0)}
\Bigl(
x,
\Bigl(\frac{n}{\zeta^i}\Bigr)^{1/d_0}
H_{(\xi,i,d_0)}(t)
\Bigr),
\\
\Theta_{(\xi,n,i)}(x,t)
&:=
\Theta_{(\xi,d_0')}
\Bigl(
x,
\Bigl(\frac{n}{\zeta^i}\Bigr)^{1/d_0}
H_{(\xi,i,d_0)}(t)
\Bigr).
\end{align*}

Similarly, we define
$
V_{(\xi,n,i)},
\Phi_{(\xi,n,i)},
$
and all the other quantities appearing in Appendix~\ref{gij}. 
By \eqref{eq:ptthe+} and \eqref{eq:parth}, we have
\begin{align}
\partial_t\Theta_{(\xi,n,i)}
+
\Bigl(\frac{n}{\zeta^i}\Bigr)^{1/d_0}
g_{(\xi,i,d_0)}
\div\bigl(
W_{(\xi,n,i)}\Theta_{(\xi,n,i)}
\bigr)
=
0.
\label{eq:4ptthe+n}
\end{align}

We next define the perturbations for the drift term. 
For $1\leq i\leq N_{q+1}$, let
\begin{align*}
w_{q+1}^{(p,i)}
:=
\sum_{n\geq3}
\tilde{\chi}(\zeta^i|M_l^i|-n)
\Bigl(\frac{n}{\zeta^i}\Bigr)^{1/d_0}
\sum_{\xi\in\Lambda^n}
W_{(\xi,n,i)}g_{(\xi,i,d_0)},
\end{align*}
and
\begin{align*}
w_{q+1}^{(c,i)}
:=
\sum_{n\geq3}
\sum_{\xi\in\Lambda^n}
\Bigl(
&
-\tilde{\chi}(\zeta^i|M_l^i|-n)
\Bigl(\frac{n}{\zeta^i}\Bigr)^{1/d_0}
\frac1{(n_*\lambda_{q+1})^2}
\nabla\Phi_{(\xi,n,i)}
\,\xi\cdot\nabla\psi_{(\xi,n,i)}
\\
&\quad
+
\nabla\bigl(
\tilde{\chi}(\zeta^i|M_l^i|-n)
\bigr)
\Bigl(\frac{n}{\zeta^i}\Bigr)^{1/d_0}
:V_{(\xi,n,i)}
\Bigr)
g_{(\xi,i,d_0)}.
\end{align*}

By \eqref{divOmega}, we obtain
\begin{align}
w_{q+1}^{(p,i)}
+
w_{q+1}^{(c,i)}
=
\sum_{n\geq3}
\sum_{\xi\in\Lambda^n}
\div\Bigl(
\tilde{\chi}(\zeta^i|M_l^i|-n)
\Bigl(\frac{n}{\zeta^i}\Bigr)^{1/d_0}
V_{(\xi,n,i)}
\Bigr)
g_{(\xi,i,d_0)}.
\label{eq:4wq+1p+wq+1c}
\end{align}
Since $V_{(\xi,n,i)}$ is skew-symmetric, it follows that
$
\div\bigl(
w_{q+1}^{(p,i)}
+
w_{q+1}^{(c,i)}
\bigr)=0.
$

Finally, we define the total perturbation and the new velocity field by
\begin{align}
w_{q+1}
:=
\sum_{i=1}^{N_{q+1}}
\bigl(
w_{q+1}^{(p,i)}
+
w_{q+1}^{(c,i)}
\bigr),
\qquad
v_{q+1}
:=
v_l+w_{q+1}.
\notag
\end{align}
Then $v_{q+1}$ is mean-zero and divergence-free.
Moreover, since $M_l^i(t)=0$ on $[0,T_{q+1}],$ the perturbation $w_{q+1} $  vanishes on $[0,T_{q+1}]$.

\subsection{Construction of $\rho^{i}_{q+1}$}\label{sec:con:the}
We next define the perturbations for the density functions. 
For $1\leq i\leq N_{q+1}$, we set
\begin{align}
\theta_{q+1}^{(p,i)}
:=
&\sum_{n\geq3}
\chi(\zeta^i|M_l^i|-n)
\Bigl(\frac{n}{\zeta^i}\Bigr)^{1/d_0'}
\sum_{\xi\in\Lambda^n}
\Gamma_\xi\Bigl(\frac{M_l^i}{|M_l^i|}\Bigr)
\Theta_{(\xi,n,i)}
g_{(\xi,i,d_0')},
\notag\\
\theta_{q+1}^{(c,i)}
:=
&-
\int_{\mathbb{T}^d}
\theta_{q+1}^{(p,i)}\,\dif x,
\notag\\
\theta_{q+1}^{(o,i)}
:=
&-
\sigma^{-1}
\sum_{n\geq3}
\sum_{\xi\in\Lambda^n}
h_{(\xi,i,d_0)}
\div\Bigl(
\chi(\zeta^i|M_l^i|-n)
\frac{n}{\zeta^i}
\Gamma_\xi\Bigl(\frac{M_l^i}{|M_l^i|}\Bigr)
\xi
\Bigr).
\notag
\end{align}

By an argument analogous to that in \cite[(5.5)]{LRZ25}, we obtain
\begin{align}
w_{q+1}^{(p,i)}
\theta_{q+1}^{(p,i)}
=
&
\sum_{n\geq3}
\sum_{\xi\in\Lambda^n}
\chi(\zeta^i|M_l^i|-n)
\frac{n}{\zeta^i}
\Gamma_\xi\Bigl(\frac{M_l^i}{|M_l^i|}\Bigr)
\mathbb{P}_{\neq0}
\bigl(
W_{(\xi,n,i)}
\Theta_{(\xi,n,i)}
\bigr)
g_{(\xi,i,d_0)}
g_{(\xi,i,d_0')}
\notag\\
&+
\sum_{n\geq3}
\sum_{\xi\in\Lambda^n}
\chi(\zeta^i|M_l^i|-n)
\frac{n}{\zeta^i}
\Gamma_\xi\Bigl(\frac{M_l^i}{|M_l^i|}\Bigr)
\xi
\bigl(
g_{(\xi,i,d_0)}
g_{(\xi,i,d_0')}
-1
\bigr)
\notag\\
&+
\chi(\zeta^i|M_l^i|-n)
\frac{n}{\zeta^i}
\frac{M_l^i}{|M_l^i|}.
\notag
\end{align}

Combining this identity with \eqref{eq:4ptthe+n}, we deduce that
\begin{align}
&
\partial_t\theta_{q+1}^{(p,i)}
+
\div\bigl(
w_{q+1}^{(p,i)}
\theta_{q+1}^{(p,i)}
-
M_l^i
\bigr)
+
\partial_t\theta_{q+1}^{(o,i)}
\notag\\
=
&
\sum_{n\geq3}
\sum_{\xi\in\Lambda^n}
\partial_t
\Bigl[
\chi(\zeta^i|M_l^i|-n)
\Bigl(\frac{n}{\zeta^i}\Bigr)^{1/d_0'}
\Gamma_\xi\Bigl(\frac{M_l^i}{|M_l^i|}\Bigr)
g_{(\xi,i,d_0')}
\Bigr]
\Theta_{(\xi,n,i)}
\notag\\
&+
\sum_{n\geq3}
\sum_{\xi\in\Lambda^n}
\nabla
\Bigl[
\chi(\zeta^i|M_l^i|-n)
\frac{n}{\zeta^i}
\Gamma_\xi\Bigl(\frac{M_l^i}{|M_l^i|}\Bigr)
\Bigr]
g_{(\xi,i,d_0)}
g_{(\xi,i,d_0')}
\mathbb{P}_{\neq0}
\bigl(
W_{(\xi,n,i)}
\Theta_{(\xi,n,i)}
\bigr)
\notag\\
&+
\div
\Bigl(
\sum_{n\geq3}
\chi(\zeta^i|M_l^i|-n)
\frac{n}{\zeta^i}
\frac{M_l^i}{|M_l^i|}
-
M_l^i
\Bigr)
\notag\\
&-
\sigma^{-1}
\sum_{n\geq3}
\sum_{\xi\in\Lambda^n}
h_{(\xi,i,d_0)}
\partial_t
\div
\Bigl(
\chi(\zeta^i|M_l^i|-n)
\frac{n}{\zeta^i}
\Gamma_\xi\Bigl(\frac{M_l^i}{|M_l^i|}\Bigr)
\xi
\Bigr).
\label{wq+1good}
\end{align}

We now define, for every $1\leq i\leq N_{q+1}$,
\begin{align}
\theta_{q+1}^i
:=
\theta_{q+1}^{(p,i)}
+
\theta_{q+1}^{(c,i)}
+
\theta_{q+1}^{(o,i)},
\qquad
\rho_{q+1}^i
:=
\rho_l^i+\theta_{q+1}^i.
\notag
\end{align}
For $i>N_{q+1}$, we simply set
$
\rho_{q+1}^i:=\rho_q^i.
$

By construction,
$
\int_{\mathbb{T}^d}\rho_{q+1}^i\,\dif x=1
$
for every $i\in\mathbb{N}$.
Since
$
M_l^i(t)=0
$
on
$
[0,T_{q+1}],
$
the perturbation $\theta_{q+1}^i(t)$ vanishes there for all
$
1\leq i\leq N_{q+1}.
$
Hence
$
\rho_{q+1}^i(t)=1
$
on
$
[0,T_{q+1}]
$
for all
$
1\leq i\leq N_{q+1}.
$
For
$
i>N_{q+1},
$
we already have
$
\rho_{q+1}^i(t)
=
\rho_q^i(t)
=
1
$
on
$
[0,T_{q+1}].
$

Moreover, since the functions
$
g_{(\xi,i,d_0)}
$
have disjoint supports for distinct $i$, and since
$
\theta_{q+1}^{(c,i)}
$
depends only on $t$, we obtain for
$
1\leq i\leq N_{q+1}
$
that
\begin{align}
\div\bigl(
w_{q+1}\theta_{q+1}^i
\bigr)
=
\div\Bigl(
(w_{q+1}^{(p,i)}+w_{q+1}^{(c,i)})
\theta_{q+1}^{(p,i)}
+
w_{q+1}\theta_{q+1}^{(o,i)}
\Bigr).
\label{bd:4wq+1rhoq+1i}
\end{align}

\subsection{Construction of the stress terms $M^{i}_{q+1}$}\label{sec:4defmq+1}

\subsubsection{The case $1\leq i\leq N_{q+1}$} 
From \eqref{bd:4wq+1rhoq+1i}, and the definition of the perturbations 
we obtain

\begin{align*}
 -\div M^{i}_{q+1}&=\partial_t\theta_{q+1}^i+\div (w_{q+1}^{(p,i)}\theta_{q+1}^{(p,i)}-M^{i}_l)(:=\div M^{i}_{osc})\\
 &\quad-\div[ \div( A(\rho^{i}_{q+1})\rho^{i}_{q+1}-A(\rho^{i}_{l}) \rho^{i}_{l})-(V*\rho_{q+1}^i)\rho_{q+1}^i+(V*\rho_{l}^i)\rho_{l}^i](:=\div M^{i}_{nonlin})\notag\\ 
 &\quad+\div(v_l\theta^{i}_{q+1}+w_{q+1}(\rho^{i}_l+\theta_{q+1}^{(o,i)})+w_{q+1}^{(c,i)}\theta_{q+1}^{(p,i)})(:=\div M^{i}_{lin})\\
&\quad -\div M_{com}^i,
\end{align*}
where we define the nonlinear error and the linear error  respectively, by
\begin{align*} 
M^{i}_{nonlin}:&= -\div( A(\rho^{i}_{q+1}) \rho^{i}_{q+1}-A(\rho^{i}_{l}) \rho^{i}_{l} )-(V*\rho_{q+1}^i)\rho_{q+1}^i+(V*\rho_{l}^i)\rho_{l}^i,\\
M^{i}_{lin}:&=v_l\theta^{i}_{q+1}+w_{q+1}(\rho^{i}_l+\theta_{q+1}^{(o,i)}) +w_{q+1}^{(c,i)}\theta_{q+1}^{(p,i)}.
\end{align*}

 To define the oscillation error,   using the inverse divergence operators  $\mathcal{R}_1,\mathcal{B}_1$ introduced in Section \ref{tamr} and  \eqref{wq+1good}, we   define $M^{i}_{osc}:=M^{i}_{osc,t}+M^{i}_{osc,x}+M^{i}_{osc,c}+M^{i}_{osc,o}$ as
\begin{align*}
   M^{i}_{osc,t}&:=\sum_{n\geq3}\sum_{\xi\in\Lambda^{n}}\mathcal{R}_{1}\(\partial_t[\chi(\zeta^{i}|M^{i}_l|-n)\(\frac{n}{\zeta^{i}}\)^{1/d_0'}\Gamma_{\xi}\(\frac{M^{i}_l}{|M^{i}_l|}\)g_{(\xi,i,d_0')}]\Theta_{(\xi,n,i)}\),\\
M^{i}_{osc,x}&:=\sum_{n\geq3}\sum_{\xi\in\Lambda^{n}}\mathcal{B}_{1}\(\nabla[\chi(\zeta^{i}|M^{i}_l|-n)\frac{n}{\zeta^{i}}\Gamma_{\xi}\(\frac{M^{i}_l}{|M^{i}_l|}\)],\mathbb{P}_{\neq0}(W_{(\xi,n,i)}\Theta_{(\xi,n,i)})\)g_{(\xi,i,d_0)}g_{(\xi,i,d_0')},\\
   M^{i}_{osc,c}&:=\sum_{n\geq3}\chi(\zeta^{i}|M^{i}_l|-n)\frac{n}{\zeta^{i}}\frac{M^{i}_l}{|M^{i}_l|}- M^{i}_l,\\
    M^{i}_{osc,o}&:=-\sigma^{-1}\sum_{n\geq3}\sum_{\xi\in\Lambda^{n}}h_{(\xi,i,d_0)}\partial_t\(\chi(\zeta|M_l^i|-n)\frac{n}{\zeta^i}\Gamma_{\xi}\(\frac{M_l^i}{|M_l^i|}\)\xi\).
\end{align*}
 
Then we define 
$$-{M}^i_{q+1}:=M^{i}_{osc}+M^{i}_{nonlin}+M^{i}_{lin}-M_{com}^i.$$
Since $M^{i}_l(t)=w_{q+1}(t)=\theta^i_{q+1}(t)=0$ on $[0,T_{q+1}]$,  we have $M_{q+1}^i(t)=0$ on $[0,T_{q+1}]$, which implies  \eqref{bd:4mq=rhoq=0} for $M_{q+1}^i,1\leq i\leq N_{q+1}$. 

\subsubsection{The case $i>N_{q+1}$}
In this case, since
$
\div(v_{q+1}-v_q)=0
$
and
$
\rho_{q+1}^i=\rho_q^i=\rho_0^i,
$
we   define
\begin{align}
M_{q+1}^i
:=
M_q^i-(v_{q+1}-v_q)(\rho_0^i-1)
=
M_0^i-(v_{q+1}-v_0)(\rho_0^i-1).
\notag
\end{align}
It is straightforward to verify that
$
(v_{q+1},\rho_{q+1}^i,M_{q+1}^i)_{i>N_{q+1}}
$
satisfies \eqref{eq:4qth} at level $q+1$.

Moreover, since
$
\rho_0^i(t)=1
$
and
$
M_0^i(t)=0
$
on
$
[0,T_0],
$
it follows that
$
M_{q+1}^i(t)=0
$
on
$
[0,T_{q+1}],
$
which proves \eqref{bd:4mq=rhoq=0} for
$
M_{q+1}^i,
\ 
i>N_{q+1}.
$

\subsection{Proof of Proposition \ref{prop:case4}}  In this section, we verify that the perturbations and the new stress terms constructed above satisfy the properties stated in Proposition~\ref{prop:case4}. 
Most of the estimates are analogous to the basic convex integration estimates, so we mainly focus on the new features arising from the choice of $N_{q+1}$ and on the estimates for $M_{q+1}^i$ when $i>N_{q+1}$.

\subsubsection{Choice of parameters}\label{sec:4choicepara}
Regarding the parameters of the building blocks, we define
  
\begin{align}
  \lambda=\lambda_{q+1},\ r_\perp=\lambda_{q+1}^{-1+\frac1{N}},\ r_\parallel=\lambda_{q+1}^{-1+\frac2{N}},\ \eta=\lambda_{q+1}^{-1},\  \mu=r_\perp^{-\frac{d-1}{d_0}}r_\parallel^{-\frac1{d_0}}, \ \sigma=\lambda_{q+1}^{\frac1{2N}}, \notag
\end{align}
where $N>4d$ is a sufficiently large integer satisfying $\frac{d}{p}+\frac{1}{r}>1+\frac{4d}N,\frac{d}{s}>1+\frac{4d}N$. We further define $d_0:=d+1-\frac{4d}{N}\in(d,d+1), d_0':=\frac{d_0}{d_0-1}\in(1,2)$. It is straightforward to verify \eqref{etan}.
With these choices,  we have
\begin{align}
  r_\perp^{d-1-\frac{d-1}{d_0'}} r_\parallel^{1-\frac{1}{d_0'}}\eta^{-\frac{1}{d_0'}},\    r_\perp^{\frac{d-1}{p}-\frac{d-1}{d_0}} r_\parallel^{\frac{1}{p}-\frac{1}{d_0}}\eta^{\frac1r-\frac1{d_0}}, \  r_\perp^{\frac{d-1}{s}-\frac{d-1}{d_0}} r_\parallel^{\frac{1}{s}-\frac{1}{d_0}}\eta^{-\frac1{d_0}} \leq\lambda^{-\frac1N}.\label{bd:paralam4}
\end{align}
 In fact, by a direct calculation, it holds that
    \begin{align*}
        \lambda r_\perp^{d-1-\frac{d-1}{d_0'}}r_\parallel^{1-\frac{1}{d_0'}}\eta^{1-\frac1{d_0'}}&=r_\perp^{d-1-\frac{d-1}{d_0'}} r_\parallel^{1-\frac{1}{d_0'}}\eta^{-\frac{1}{d_0'}}\leq r_\parallel^{\frac{d}{d_0}}\eta^{\frac{1}{d_0}-1}\\
        &= \lambda^{\frac{2d}{Nd_0}+\frac{d_0-d-1}{d_0}}= \lambda^{-\frac{2d}{Nd_0}}\leq \lambda^{-\frac1N},\\
 r_\perp^{\frac{d-1}{p}-\frac{d-1}{d_0}} r_\parallel^{\frac{1}{p}-\frac{1}{d_0}}\eta^{\frac1r-\frac1{d_0}}&\leq  \lambda^{(\frac{d+1}{N}-d)(\frac{1}{p}-\frac{1}{d_0})-\frac1r+\frac1{d_0}}
 \leq\lambda^{-\frac{d}{p}-\frac1r+\frac{d+1}{d_0}+\frac{d+1}{N}}\\
 &\leq \lambda^{-1-\frac{4d}N+1+\frac{2d}{N}+\frac{d+1}{N}}\leq \lambda^{-\frac1N},\\
  r_\perp^{\frac{d-1}{s}-\frac{d-1}{d_0}} r_\parallel^{\frac{1}{s}-\frac{1}{d_0}}\eta^{-\frac1{d_0}}&\leq  \lambda^{(\frac{d+1}{N}-d)(\frac{1}{s}-\frac{1}{d_0})+\frac1{d_0}}
 \leq\lambda^{-\frac{d}{s}+\frac{d+1}{d_0}+\frac{d+1}{N}}\\
 &\leq \lambda^{-1-\frac{4d}N+1+\frac{2d}{N}+\frac{d+1}{N}}\leq \lambda^{-\frac1N}.
    \end{align*}

In the sequel, we shall also require \eqref{alphab>} together with \begin{align}
   \lambda_{q}^{d+4}\leq\lambda_{q+1}^\alpha,\ \ (12d+44)\alpha<\frac1{2N}.
 \end{align} 
 
These conditions can be achieved as follows:
first choose $\alpha>0$ sufficiently small such that
$
(12d+44)\alpha<\frac1{2N},
$
then choose
$
b\in2N\mathbb{N}
$
sufficiently large so that
$
b>\frac{6d+24}{\alpha},
$
and finally choose $\beta>0$ sufficiently small such that
$
\alpha>4\beta b.
$
At the end, we choose $a$ sufficiently large in order to absorb all implicit and universal constants arising in the subsequent estimates, and to guarantee the validity of \eqref{ieq:4ab2}.

Finally, we record the following estimate for the amplitude functions.

\bl\label{lem:4chi}$($\cite[Proposition 5.2]{LRZ25}$)$.
 For $ M,N\in\mathbb{N}_0,1\leq i\leq N_{q+1}$  we have
\begin{align*}
    \sum_{n\geq3} \(\frac{n}{\zeta^i}\)^M\(\|\chi(\zeta^i|M^{i}_l|-n)\|_{C_{t,x}^N}+ \|\tilde{\chi}(\zeta^i|M^{i}_l|-n)\|_{C_{t,x}^N}\)&\lesssim l^{- (d+4)N-(d+2)(M+1)},\\
     \sum_{n\geq3}\sum_{\xi\in\Lambda^{n}}\(\frac{n}{\zeta^i}\)^M\norm{\chi(\zeta^i|M^{i}_l|-n)\Gamma_\xi\(\frac{M^{i}_l}{|M^{i}_l|}\)}_{C_{t,x}^N}& \lesssim l^{-(2d+8)N-(d+2)(M+1)}.
\end{align*}
\el 

\subsubsection{Proof of  \eqref{bd:4vq+1-vqldd}  for $v_{q+1}-v_q$.}
From this section onward, we establish the desired estimates for the perturbation $w_{q+1}$.

We first estimate the principal perturbations $w_{q+1}^{(p,i)}$ for   $1\leq i\leq N_{q+1}$ in the $L_t^{d_0}L^{d_0}$-norm. By Cauchy's inequality and the fact that $\sum_{n\in\mathbb{Z}} \tilde{\chi}(t - n) \leq2$  we have
\begin{align}
|w_{q+1}^{(p,i)}|^{d_0}&\leq\(\sum_{n\geq3}\tilde{\chi}(\zeta^{i}|M^{i}_l|-n)\)^{d_0-1}\sum_{n\geq3}\tilde{\chi}(\zeta^{i}|M^{i}_l|-n)\frac{n}{\zeta^{i}}\Bigg{|}\sum_{\xi\in\Lambda^{n}}W_{(\xi,n,i)}g_{(\xi,i,d_0)}\bigg{|}^{d_0}\notag\\ 
&\lesssim \sum_{n\geq3}\tilde{\chi}(\zeta^{i}|M^{i}_l|-n)\frac{n}{\zeta^{i}}\sum_{\xi\in\Lambda^{n}}\left|W_{(\xi,n,i)}g_{(\xi,i,d_0)}\right|^{d_0}.\notag
\end{align} 

By applying the generalized H\"older inequality of 
Theorem \ref{ihiot} in spatial direction, together with the estimates for the building blocks in \eqref{int4} and Lemma \ref{lem:4chi} we deduce 
\begin{align}
\|w_{q+1}^{(p,i)}(t)\|^{d_0}_{L^{d_0}}
&\lesssim\sum_{n\geq3}\norm{\tilde{\chi}(\zeta^{i}|M^{i}_l(t)|-n)\frac{n}{\zeta^{i}}}_{L^1}\sum_{\xi\in\Lambda^{n}}\|W_{(\xi,n,i)}\|_{C_tL^{d_0}}^{d_0}g_{(\xi,i,d_0)}^{d_0}(t)\notag\\ 
&\quad+(r_\perp\lambda_{q+1})^{-1}\norm{\tilde{\chi}(\zeta^{i}|M^{i}_l(t)|-n)\frac{n}{\zeta^{i}}}_{C_{t,x}^1}\sum_{\xi\in\Lambda^{n}}\|W_{(\xi,n,i)}\|_{C_tL^{d_0}}^{d_0}g_{(\xi,i,d_0)}^{d_0}(t)\notag\\
&\lesssim\left(\norm{\sum_{n\geq3}\tilde{\chi}(\zeta^{i}|M^{i}_l(t)|-n)(M^{i}_l(t)+|\zeta^{i}|^{-1}
)}_{L^1}+l^{-3d-8}\lambda_{q+1}^{-\frac{1}{N}}\right)\sum_{\xi\in\Lambda}g_{(\xi,i,d_0)}^{d_0}(t)\notag\\
&\lesssim (\|M^{i}_l(t)\|_{L^1}+2^{-i}\delta_{q+1})\sum_{\xi\in\Lambda}g_{(\xi,i,d_0)}^{d_0}(t)
,\notag
\end{align}
where we used   the fact that $\sum_{n\in\mathbb{Z}} \tilde{\chi}(t - n) \leq2$, and used conditions on the parameters to have $(6d+17)\alpha-\frac{1}{N}<-\alpha<-2\beta,2^{i}\leq 2^{N_{q+1}}\leq \lambda_{q+1}^{\alpha}$. Here we recall the notation $\Lambda=\Lambda^{1}\cup\Lambda^{2}$.
Then we apply the generalized H\"older inequality of Theorem \ref{ihiot} in time direction, the bounds \eqref{bd:gwnp} and \eqref{bd:4mlcn} to deduce  for some ${C}_v\geq1$
\begin{align}
\|w_{q+1}^{(p,i)}\|_{L^{d_0}_tL^{d_0}}^{d_0}&\lesssim (\|M^{i}_l\|_{L^1_tL^1}+2^{-i}\delta_{q+1}+\sigma^{-1}\|M^{i}_l\|_{C_{t,x}^1})\sum_{\xi\in\Lambda}\norm{g_{(\xi,i,d_0)}}^{d_0}_{L_t^{d_0}}
\notag\\
    &\lesssim C_02^{-i}(\delta_{q+1}+\lambda_{q+1}^{(2d+6)\alpha-\frac{1}{2N}})\leq (\frac14 C_v(2^{\frac1{d+1}}-1))^{d_0}C_02^{-i}\delta_{q+1},\label{bd:4wq+1pl2}
\end{align}
where we used  conditions on the parameters to have $(2d+6)\alpha-\frac{1}{2N}<-\alpha<-2\beta$.

For the general $L^u_tL^m$-norm with $u,m\in[1,\infty]$, by the estimates for the building blocks in \eqref{int2}-\eqref{int4}, \eqref{bd:gwnp}
and the estimate for the amplitude function in  Lemma \ref{lem:4chi} we obtain
\begin{align}
\|w_{q+1}^{(p,i)}\|_{L^u_tL^m}&\lesssim\sum_{n\geq3}\sum_{\xi\in\Lambda^{n}}\norm{\tilde{\chi}(\zeta^{i}|M^{i}_l|-n)\(\frac{n}{\zeta^{i}}\)^{1/d_0}}_{C_{t,x}^0}\|W_{(\xi,n,i)}\|_{C_tL^m}\|g_{(\xi,i,d_0)}\|_{L_t^u}\notag\\
&\lesssim l^{-2d-4}r_\perp^{\frac{d-1}{m}-\frac{d-1}{d_0}} r_\parallel^{\frac{1}{m}-\frac{1}{d_0}}\eta^{\frac1u-\frac1{d_0}},\label{bd:4wq+1plp}\\
\|w_{q+1}^{(c,i)}\|_{L^u_tL^m}&\lesssim\sum_{n\geq3}\sum_{\xi\in\Lambda^{n}}\norm{\tilde{\chi}(\zeta^{i}|M^{i}_l|-n)\(\frac{n}{\zeta^{i}}\)^{1/d_0}}_{C^1_{t,x}}\notag\\ 
&\quad\quad \times\(\frac{1}{\lambda_{q+1}^2}\|\nabla\Phi_{(\xi,n,i)} \xi\cdot\nabla\psi_{(\xi,n,i)} \|_{L^m}+\|V_{(\xi,n,i)}\|_{L^m}\)\|g_{(\xi,i,d_0)}\|_{L_t^u}\notag\\
&\lesssim l^{-3d-8}r_\perp^{\frac{d-1}{m}-\frac{d-1}{d_0}} r_\parallel^{\frac{1}{m}-\frac{1}{d_0}}\frac{r_\perp}{ r_\parallel}\eta^{\frac1u-\frac1{d_0}}.\label{bd:4wq+1clp}
\end{align}

With these estimates, combining with the choice of parameters in  \eqref{para42} and the bound \eqref{bd:4wq+1pl2} we obtain 
\begin{align}
\|w_{q+1}\|_{L^{d_0}_tL^{d_0}}&\leq \sum_{i=1}^{N_{q+1}}\frac{C_v}42^{-i/d_0}C_0^{1/d_0}(2^{\frac1{d+1}}-1)\delta_{q+1}^{1/d_0}+N_{q+1}Cl^{-3d-8}\frac{r_\perp}{ r_\parallel}\notag \\
&\leq \frac{C_v}4C_0^{1/d_0}\delta_{q+1}^{1/d_0}+C\lambda_{q+1}^{(6d+17)\alpha-\frac1N} \leq \frac{C_v}2C_0^{1/d_0}\delta_{q+1}^{1/d_0},\label{bd:4wq+1l2}\end{align}
where we used  conditions on the parameters to have
${(6d+18)\alpha<\frac{1}{N}},N_{q+1}\lesssim\lambda_{q+1}^{\alpha}$ and chose $a$ large enough to absorb the universal constant. 
The above inequality   yields that \eqref{bd:4vq+1-vqldd} holds for $v_{q+1}-v_q$ and then \eqref{bd:4vql2} holds for $v_{q+1}$:
\begin{align*}
\|v_{q+1}-v_{q}\|_{L^{d_0}_tL^{d_0}}&\leq\|w_{q+1} \|_{L^{d_0}_tL^{d_0}}+\|v_{l}-v_{q}\|_{L^{d_0}_tL^{d_0}}\\
&\leq \frac12
C_vC_0^{1/d_0}\delta_{q+1}^{1/d_0}+lC_0^{1/d_0}\lambda_q^{d+4} \leq
C_vC_0^{1/d_0}\delta_{q+1}^{1/d_0}.
\end{align*}

\subsubsection{Proof of  \eqref{bd:4vq+1-vqlpr}}

Combining  the bounds \eqref{bd:4wq+1plp} and \eqref{bd:4wq+1clp} above we obtain 
\begin{align}
\|&v_{q+1}-v_q\|_{L^{r}_tL^{p}}+\|v_{q+1}-v_q\|_{C_tL^{s}}\lesssim  \|w_{q+1}\|_{C_tL^{s}}+  \|w_{q+1}\|_{L^{r}_tL^{p}}+l\|v_q\|_{C_{t,x}^1}\notag \\
&\lesssim  N_{q+1}l^{-3d-8}( r_\perp^{\frac{d-1}s-\frac{d-1}{d_0}} r_\parallel^{\frac1s-\frac{1}{d_0}}\eta^{-\frac1{d_0}}+r_\perp^{\frac{d-1}{p}-\frac{d-1}{d_0}} r_\parallel^{\frac{1}{p}-\frac{1}{d_0}}\eta^{\frac1r-\frac1{d_0}})+l \lambda_q^{d+4}\notag\\ 
&\lesssim \lambda_{q+1}^{(6d+17)\alpha-\frac1N}+\lambda_{q+1}^{-\alpha}
\leq\delta_{q+1}^{1/d_0}.\label{bd:4wq+1lpr}
\end{align}
  Here we used \eqref{para42}, \eqref{bd:paralam4} and  conditions on the parameters to have 
${(6d+18)\alpha<\frac{1}{N}},N_{q+1}\lesssim\lambda_{q+1}^{\alpha}$. Then we chose $a$ large enough to absorb the universal constant.

\subsubsection{Proof of  \eqref{bd:4rhoqc1} for $v_{q+1}$.}
 By the estimates for the building blocks in \eqref{int4}, \eqref{bd:gwnp},  \eqref{eq:4wq+1p+wq+1c} and the estimates for the amplitude functions in Lemma \ref{lem:4chi} we have for $d_0\geq2$
\begin{align}
\|w_{q+1}\|_{C_{t,x}^1}&\lesssim\sum_{i=1}^{N_{q+1}}\sum_{n\geq3}\sum_{\xi\in\Lambda^{n}}\norm{\tilde{\chi}(\zeta^{i}|M^{i}_l|-n)\(\frac{n}{\zeta^{i}}\)^{1/d_0}}_{C_{t,x}^{2}}\|\nabla V_{(\xi,n,i)}\|_{C_{t,x}^1}\|g_{(\xi,i,d_0)}\|_{C_t^1}\notag\\
&\lesssim N_{q+1}l^{-4d-12}\lambda_{q+1}\mu r_\parallel^{-\frac{1}{d_0}}r_\perp^{-\frac{d-1}{d_0}}\sigma\eta^{-1-\frac{2}{d_0}}\lesssim N_{q+1}\lambda_{q+1}^{(8d+24)\alpha+d+\frac72}.\notag
\end{align}
 Thus, by $N_{q+1}\lesssim\lambda_{q+1}^{\alpha},(8d+25)\alpha<\frac{1}{2}$ we obtain the following
 \begin{align*}
\|v_{q+1}\|_{C_{t,x}^1}\leq\|v_{l}\|_{C_{t,x}^1}+\|w_{q+1}\|_{C_{t,x}^1}\leq C_0^{1/d_0}\lambda_{q}^{d+4}+\frac12\lambda_{q+1}^{d+4}\leq C_0^{1/d_0}\lambda_{q+1}^{d+4},
 \end{align*} where we chose $a$ large enough to absorb the universal constant.

\subsubsection{Proof of  \eqref{bd:4vq+1-vqldd} for $\rho^{i}_{q+1}-\rho_q^i$}\label{sec:4esttheq+1}
Similarly as before, we first estimate the principal perturbations  $\theta^{(p,i)}_{q+1}$ in $L^{d_0'}_tL^{d_0'}$.  By H\"older inequality and the fact that  $\sum_{n\in\mathbb{Z}} {\chi}(t - n) =1$, and $\Gamma_{\xi}$ are uniformly bounded we have
\begin{align}
|\theta_{q+1}^{(p,i)}|^{d_0'}
&\lesssim\(\sum_{n\geq3}\chi(\zeta^{i}|M^{i}_l|-n)\)^{d_0'-1}\sum_{n\geq3}\chi(\zeta^{i}|M^{i}_l|-n)\frac{n}{\zeta^{i}}\sum_{\xi\in\Lambda^{n}}\left|\Gamma_{\xi}\(\frac{M^{i}_l}{|M^{i}_l|}\)\Theta_{(\xi,n,i)}g_{(\xi,i,d_0')}\right|^{d_0'}\notag\\
&\lesssim \sum_{n\geq3}\chi(\zeta^{i}|M^{i}_l|-n)\frac{n}{\zeta^{i}}\sum_{\xi\in\Lambda^{n}}\left|\Theta_{(\xi,n,i)}g_{(\xi,i,d_0')}\right|^{d_0'}.\notag
\end{align} 
By the same argument as in \eqref{bd:4wq+1pl2}, we have 
\begin{align}
    \|\theta_{q+1}^{(p,i)}\|_{L^{d_0'}_tL^{d_0'}}^{d_0'}&\lesssim \(\|M^{i}_l\|_{L^1_tL^1}+2^{-i}\delta_{q+1}+\sigma^{-1}\|M^{i}_l\|_{C_{t,x}^1}\)
    \sum_{\xi\in\Lambda }\norm{g_{(\xi,i,d_0')}}^{d_0'}_{L_t^{d_0'}}
    \notag\\
    &\lesssim 2^{-i}C_0(\delta_{q+1}+\lambda_{q+1}^{(2d+6)\alpha-\frac{1}{2N}})\leq 2^{-i}(\frac12C_v)^{d_0'}C_0\delta_{q+1},\label{bd:4theq+1pl2}
\end{align}
where we used  conditions on the parameters to have $(2d+6)\alpha-\frac{1}{2N}<-\alpha<-2\beta$.

For general $L_t^uL^m$-norm with $u,m\in[1,\infty]$, by the estimates for the building blocks in \eqref{int4theta}, \eqref{bd:gwnp} and Lemma \ref{lem:4chi} we obtain
\begin{align}
\|\theta_{q+1}^{(p,i)} \|_{L^u_tL^{m}}&\lesssim\sum_{n\geq3}\sum_{\xi\in\Lambda^{n}}\norm{\chi(\zeta^{i}|M^{i}_l|-n)\(\frac{n}{\zeta^{i}}\)^{1/d_0'}\Gamma_{\xi}\(\frac{M^{i}_l}{|M^{i}_l|}\)}_{C_{t,x}^0}\|\Theta_{(\xi,n,i)} \|_{C_tL^{m}}\|g_{(\xi,i,d_0')}\|_{L_t^{u}}\notag\\ 
&\lesssim l^{-2d-4}r_\perp^{\frac{d-1}{m}-\frac{d-1}{d_0'}} r_\parallel^{\frac{1}{m}-\frac{1}{d_0'}}\eta^{\frac1u-\frac{1}{d_0'}}.\label{bd:4theq+1plp}
\end{align}
Moreover, by the bounds \eqref{bd:gwnp} and \eqref{bd:paralam4} we have for some $\epsilon>0$ small enough
\begin{align}
\|\theta_{q+1}^{(c,i)} \|_{C_t}&\lesssim\|\theta_{q+1}^{(p,i)} \|_{C_tL^{1+\epsilon}}
\lesssim l^{-2d-4}r_\perp^{\frac{d-1}{1+\epsilon}-\frac{d-1}{d_0'}} r_\parallel^{\frac{1}{1+\epsilon}-\frac{1}{d_0'}}\eta^{-\frac{1}{d_0'}}\lesssim  \lambda_{q+1}^{(4d+8)\alpha-\frac1N+d\epsilon}\lesssim4^{-i} \lambda_{q+1}^{-2\alpha},\label{bd:4theq+1clp}\\
\|\theta_{q+1}^{(o,i)}\|_{C_tC^1}
&\lesssim \sigma^{-1} l^{-6d-20}\lesssim \lambda_{q+1}^{(12d+40)\alpha-\frac1{2N}}\lesssim4^{-i} \lambda_{q+1}^{-2\alpha},\label{bd:4theq+1olp}
\end{align}
where we used  conditions on the parameters to have $(12d+44)\alpha<\frac1{2N},2^i\leq2^{N_{q+1}}\lesssim\lambda_{q+1}^{\alpha}$ and chose $a$ large enough to absorb the universal constant. We also chose $\epsilon>0$ small such that $d\epsilon<\alpha$.
Then, combining the above estimates together, we obtain by   \eqref{bd:4rhoqc1} 
\begin{align}
\|\rho^{i}_{q+1}-\rho^{i}_q\|_{L^{d_0'}_tL^{d_0'}}
&\leq\|\theta^{i}_{q+1}\|_{L^{d_0'}_tL^{d_0'}}+l\|\rho_q^i-1\|_{C_{t,x}^1}\notag\\
&\leq\frac12 C_v C_0^{1/{d_0'}}2^{-i/d_0'}\delta_{q+1}^{1/{d_0'}}+4^{-i} \lambda_{q+1}^{-2\alpha}+l2^{-i}C_0^{1/{d_0'}}  \lambda_q^{d+4}  \leq C_v C_0^{1/{d_0'}}2^{-i/d_0'}\delta_{q+1}^{1/{d_0'}},\notag
\end{align}
which implies \eqref{bd:4vq+1-vqldd} for $\rho^{i}_{q+1}-\rho_q^i,1\leq i\leq N_{q+1}$. Here we chose $a$ large enough to absorb the universal constant. For $i>N_{q+1}$, we have
$
\rho_{q+1}^i-\rho_q^i=0.
$
Hence, the second estimate in \eqref{bd:4vq+1-vqldd} also holds for $i>N_{q+1}$.

\subsubsection{Proof of   \eqref{bd:4rhoq+1-rhoql1}}
We first estimate $\theta_{q+1}^{(p,i)}$ in $W^{1,1+\epsilon}$-norm for some $\epsilon>0$ small enough. By the bounds for the building blocks in \eqref{int4theta}, \eqref{bd:gwnp}  and the bounds for the amplitude functions in Lemma \ref{lem:4chi} we have
\begin{align*}
\|\theta_{q+1}^{(p,i)}\|_{L^1_tW^{1,1+\epsilon}}&\lesssim\sum_{n\geq3}\sum_{\xi\in\Lambda^{n}}\norm{\chi(\zeta^{i}|M^{i}_l|-n)\(\frac{n}{\zeta^{i}}\)^{1/d_0'}\Gamma_{\xi}\(\frac{M^{i}_l}{|M^{i}_l|}\)}_{C_{t,x}^1}\|\Theta_{(\xi,n,i)} \|_{C_tW^{1,1+\epsilon}}\|g_{(\xi,i,d_0')}\|_{L_t^1}\notag\\ 
&\lesssim l^{-4d-12}\lambda_{q+1}r_\parallel^{\frac1{1+\epsilon}-\frac{1}{d_0'}}r_\perp^{\frac{d-1}{1+\epsilon}-\frac{d-1}{d_0'}}\eta^{1-\frac1{d_0'}}
\lesssim\lambda_{q+1}^{(8d+24)\alpha-\frac1N+d\epsilon}\lesssim2^{-i}\lambda_{q+1}^{-2\alpha} ,
\end{align*}
where we used the choice of parameters in \eqref{bd:paralam4} and  chose $\epsilon>0$ small enough such that $d\epsilon<\alpha$. We also used  conditions on the parameters to have $(8d+28)\alpha<\frac1N,2^i\leq2^{N_{q+1}}\lesssim\lambda_{q+1}^{\alpha}$.

Similarly, by the fact that $\theta_{q+1}^{(p,i)}$ is non-negative, \eqref{bd:4rhoqc1} and the choice of parameters in \eqref{para42}, \eqref{bd:paralam4}  we have
\begin{align}
\|\rho^{i}_{q+1}-\rho^{i}_q\|_{C_tL^{1+\epsilon}}&+\|\rho^{i}_{q+1}-\rho^{i}_q\|_{L_t^1W^{1,1+\epsilon}}
\leq\|\theta^{i}_{q+1}\|_{C_tL^{1+\epsilon}}+\|\theta^{i}_{q+1}\|_{L_t^1W^{1,1+\epsilon}}+ l\|\rho_q^i-1\|_{C_{t,x}^2}\notag\\
&\qquad\qquad\qquad\qquad\qquad\lesssim4^{-i}\lambda_{q+1}^{-\alpha}\leq 4^{-i}\delta_{q+1}^{1/d_0'},\notag\\
    \inf_{t\in [0,1]}(\rho^{i}_{q+1} - \rho^{i}_q)&\geq-\|\theta_{q+1}^{(c,i)}+\theta
_{q+1}^{(o,i)}\|_{C_{t,x}^0} -l\|\rho_q^i-1\|_{C_{t,x}^1}\geq -C\lambda_{q+1}^{-\alpha} \geq -\delta_{q+1}^{1/d_0'},\notag
\end{align}
which yields \eqref{bd:4rhoq+1-rhoql1} for $1\leq i\leq N_{q+1}$.  Here we chose $a$ large enough to absorb the universal constant. Since
$
\rho_{q+1}^i-\rho_q^i=0
$
for
$
i>N_{q+1},
$
the estimate \eqref{bd:4rhoq+1-rhoql1} also holds in this case.

\subsubsection{Proof of  \eqref{bd:4rhoqc1} for $\rho^{i}_{q+1}$}
  By \eqref{int4theta}, \eqref{bd:gwnp} and Lemma \ref{lem:4chi} we have
\begin{align*}
\|\theta_{q+1}^{(p,i)} \|_{C_{t,x}^1} 
&\lesssim\sum_{n\geq3}\sum_{\xi\in\Lambda^{n}}\norm{\chi(\zeta^{i}|M^{i}_l|-n)\(\frac{n}{\zeta^{i}}\)^{1/d_0'}\Gamma_{\xi}\(\frac{M^{i}_l}{|M^{i}_l|}\)}_{C_{t,x}^1}\|\Theta_{(\xi,n,i)} \|_{C_{t,x}^1}\|g_{(\xi,i,d_0')}\|_{C_t^1}\notag\\ 
&\lesssim l^{-4d-12}\lambda_{q+1}\mu r_\parallel^{-\frac{1}{d_0'}}r_\perp^{-\frac{d-1}{d_0'}}\sigma\eta^{-1-\frac{1}{d_0}-\frac{1}{d_0'}}\lesssim
\lambda_{q+1}^{(8d+24)\alpha+d+\frac{7}{2}},\notag\\
\|\theta_{q+1}^{(c,i)}\|_{C_{t}^1}&\lesssim\|\theta_{q+1}^{(p,i)} \|_{C_{t,x}^1}\lesssim 
\lambda_{q+1}^{(8d+24)\alpha+d+\frac{7}{2}},\notag\\
     \|   \theta_{q+1}^{(o,i)}\|_{C_{t,x}^1}&\lesssim \sigma^{-1}\sum_{n\geq3}\sum_{\xi\in\Lambda^{n}}\|h_{(\xi,i,d_0)}\|_{C_t^1}\norm{\div\(\chi(\zeta^i|M_l|-n)\frac{n}{\zeta^i}\Gamma_{\xi}\(\frac{M_l^i}{|M_l^i|}\)\xi\)}_{C_{t,x}^1}\\ 
     &\lesssim \sigma^{-1}\eta^{-1} l^{-6d-20}\lesssim \lambda_{q+1}^{(12d+40)\alpha+2}.\notag
\end{align*}
Moreover, 
\begin{align*}
    \|\theta_{q+1}^{(p,i)} \|_{C_{t,x}^2}
&\lesssim\sum_{n\geq3}\sum_{\xi\in\Lambda^{n}}\norm{\chi(\zeta^{i}|M^{i}_l|-n)\(\frac{n}{\zeta^{i}}\)^{1/d_0'}\Gamma_{\xi}\(\frac{M^{i}_l}{|M^{i}_l|}\)}_{C_{t,x}^2}\| \Theta_{(\xi,n,i)} g_{(\xi,i,d_0')}\|_{C_{t,x}^2}\notag\\ 
&\lesssim l^{-6d-20}\lambda_{q+1}\mu r_\parallel^{-\frac{1}{d_0'}}r_\perp^{-\frac{d-1}{d_0'}}\sigma\eta^{-2}(\lambda_{q+1}\mu+\sigma\eta^{-1})\lesssim
\lambda_{q+1}^{(12d+40)\alpha+d+\frac{11}{2}},\notag\\
   \|    \theta_{q+1}^{(o,i)}\|_{C_{t,x}^2}&\lesssim \sigma^{-1}\sum_{n\geq3}\sum_{\xi\in\Lambda^{n}}\|h_{(\xi,i,d_0)}\|_{C_t^2}\norm{\chi(\zeta^i|M_l|-n)\frac{n}{\zeta^i}\Gamma_{\xi}\(\frac{M_l^i}{|M_l^i|}\)\xi}_{C_{t,x}^3}
       \lesssim \lambda_{q+1}^{(16d+56)\alpha+4}.\notag
\end{align*}

By choosing $(12d+42)\alpha<\frac{1}{2}$, $2^i\leq2^{N_{q+1}}\lesssim\lambda_{q+1}^{\alpha}$ we deduce 
\begin{align}
    4^i\|\rho^{i}_{q+1} -1\|_{C_{t,x}^1}+\lambda_{q+1}^{-2}\|\rho^{i}_{q+1}\|_{C_{t,x}^2}
    \leq C_0^{1/d_0'}\lambda_{q}^{d+4}+ \frac12\lambda_{q+1}^{d+4}\leq C_0^{1/d_0'}\lambda_{q+1}^{d+4},\notag
\end{align}
which implies  \eqref{bd:4rhoqc1} for $\rho^{i}_{q+1},1\leq i\leq N_{q+1}$. For $i>N_{q+1}$, the estimate \eqref{bd:4rhoqc1} also holds, since
$
\rho_{q+1}^i-\rho_q^i=0
$
and
$
\lambda_q\leq\lambda_{q+1}.
$

\subsubsection{Proof of  \eqref{bd:4rql1} for $1\leq i\leq N_{q+1}$}\label{sec:4estmq+1} We now estimate each term in the definition of $M_{q+1}^i$ separately.

\textbf{Oscillation error $M^{i}_{osc}$.} By Lemma \ref{bb_1}, the estimates for the amplitude functions and for the building blocks in  Lemma \ref{lem:4chi}, \eqref{int4theta} and \eqref{bd:gwnp} respectively   we obtain
\begin{align}
    \|M^{i}_{osc,t}\|_{L^1_tL^1}
    &\lesssim \sum_{n\geq3}\sum_{\xi\in\Lambda^{n}}\norm{\chi(\zeta^{i}|M^{i}_l|-n)\(\frac{n}{\zeta^{i}}\)^{1/d_0'}\Gamma_{\xi}\(\frac{M^{i}_l}{|M^{i}_l|}\)}_{C_{t,x}^1}\|g_{(\xi,i,d_0')}\|_{W_t^{1,1}} \|\Theta_{(\xi,n,i)}\|_{C_tL^1}\notag\\
    &\lesssim l^{-4d-12}r_\perp^{d-1-\frac{d-1}{d_0'}}r_\parallel^{1-\frac{1}{d_0'}}\sigma \eta^{-\frac{1}{d_0'}}\lesssim \lambda_{q+1}^{(8d+24)\alpha-\frac 1{2N}}\lesssim2^{-i} \lambda_{q+1}^{-\alpha},\notag
\end{align}
where we used the choice of parameters in 
\eqref{para42},  \eqref{bd:paralam4} and  conditions for the parameters to have $(8d+26)\alpha<\frac 1{2N},2^i\leq2^{N_{q+1}}\lesssim \lambda_{q+1}^\alpha$.

We observe that $W_{(\xi,n,i)}\Theta_{(\xi,n,i)}$ is $(\mathbb{T}/r_\perp\lambda_{q+1})^d$-periodic. So by  Theorem \ref{bb1_1},  the estimates for the amplitude functions and for the building blocks  in Lemma \ref{lem:4chi}, \eqref{int4},  \eqref{bd:gwnp} and \eqref{int4theta} respectively    we have
\begin{align}
\|M^{i}_{osc,x} \|_{L^1_tL^1}
&\lesssim\sum_{n\geq3}\sum_{\xi\in\Lambda^{n}}\norm{\chi(\zeta^{i}|M^{i}_l|-n)\frac{n}{\zeta^{i}}\Gamma_{\xi}\(\frac{M^{i}_l}{|M^{i}_l|}\)}_{C_{t,x}^2}\notag\\ 
&\quad\quad\times (r_\perp\lambda_{q+1})^{-1}\|W_{(\xi,n,i)}\Theta_{(\xi,n,i)}\|_{C_tL^{1}}\|g_{(\xi,i,d_0)}g_{(\xi,i,d_0')}\|_{L_t^1}\notag\\
&\lesssim l^{-6d-20}(r_\perp\lambda_{q+1})^{-1}\lesssim \lambda_{q+1}^{(12d+40)\alpha-\frac{1}{N}}
\lesssim2^{-i} \lambda_{q+1}^{-\alpha}, \notag
\end{align}
where we used the choice of parameters in \eqref{para42}, \eqref{bd:paralam4} and  conditions on the parameters to have $(12d+42)\alpha<\frac{1}{N},2^i\leq2^{N_{q+1}}\lesssim \lambda_{q+1}^\alpha$.

For the stress term $M^{i}_{osc,c}$, it holds
\begin{align}
     \left| M_{osc,c}^i\right|&\leq \left|  \sum_{n=-1}^{2}\chi(\zeta^i|M_l^i|-n)M_l^i\right|+\left|\sum_{n\geq3}\chi(\zeta^i|M_l^i|-n)(\frac{n}{\zeta^i}\frac{M_l^i}{|M_l^i|}- M_l^i)\right|\notag\\
     &\leq \frac{3}{\zeta^i}+\sum_{n\geq3}\chi(\zeta^i|M_l^i|-n)\left|\frac{n}{\zeta^i}- |M_l^i|\right|\leq \frac{3}{20}2^{-i}\delta_{q+2}+\frac{1}{20}2^{-i}\delta_{q+2}
     \leq \frac15C_02^{-i}\delta_{q+2}.\notag
\end{align}

 By the bounds  \eqref{bd:gwnp}, \eqref{bd:4rql1} and \eqref{para42}  we have
\begin{align}
   \|  M^{i}_{osc,o}\|_{L^1_tL^1}&\lesssim \sigma^{-1}\sum_{n\geq3}\sum_{\xi\in\Lambda^{n}}\|h_{(\xi,d_0)}\|_{L_t^\infty}\norm{\partial_t[\chi(\zeta|M_l^i|-n)\frac{n}{\zeta^i}\Gamma_{\xi}\(\frac{M_l^i}{|M_l^i|}\)\xi]}_{C_{t,x}^0}\notag\\ 
   &\lesssim  \sigma^{-1}l^{-4d-12}
   \lesssim \lambda_{q+1}^{(8d+24)\alpha-\frac1{2N}}
   \lesssim 2^{-i}\lambda_{q+1}^{-\alpha},\notag
\end{align}
where we used   conditions on the parameters to have $(8d+26)\alpha<\frac{1}{2N},2^i\leq2^{N_{q+1}}\lesssim \lambda_{q+1}^\alpha$. 

In summary, we have
\begin{align}
    \|M^{i}_{osc}\|_{L^1_tL^1} \leq C2^{-i} \lambda_{q+1}^{-\alpha}+ \frac15C_02^{-i}\delta_{q+2}\leq \frac{2}{5}C_02^{-i}\delta_{q+2},\notag
\end{align}
where we chose $a$ large to absorb the universal constant.

\textbf{Nonlinear error $M_{nonlin}^i$.} 
     We first consider case Assumption~\ref{def:ass} \textnormal{(1)}. Using the notation $\overline\sigma(\rho)=\sigma(\rho)\sigma^T(\rho)\rho$, we have 
     \begin{align*}
       |\partial_i[ \overline \sigma(\rho_{q+1}^i)  -   \overline \sigma( \rho_l^i)   ]|&\lesssim  |(\partial_i \overline \sigma)(\rho_{q+1}^i)  -   (\partial_i \overline \sigma)(\rho_{l}^i)   |+|(\partial_\rho \overline \sigma)(\rho_{q+1}^i)\partial_i\rho_{q+1}^i  - (  \partial_\rho\overline \sigma)(\rho_{l}^i)\partial_i\rho_{l}^i  ]|\\
       &\lesssim   |\rho_{q+1}^i- \rho_l^i|(1+| \rho_{l}^i|+|\partial_i\rho_{l}^i|)+|\partial_i\rho_{q+1}^i-\partial_i \rho_l^i| ,
     \end{align*}
     which implies    \begin{align*}
        \|\div[A(\rho_{q+1}^i)\rho_{q+1}^i-A({\rho_l^i}){\rho_l^i}]\|_{L_t^1L^1}&\lesssim\| \theta_{q+1}^i \|_{L^1_tL^1}(1+\|\rho_q^i\|_{C_{t,x}^1})+ \|\theta_{q+1}^i\|_{L^1_tW^{1,1}}.
    \end{align*}
     
Next, in case Assumption~\ref{def:ass} \textnormal{(2)}, Young’s inequality yields
\begin{align*}
      \|\div[A(\rho_{q+1}^i)\rho_{q+1}^i&-A({\rho_l^i}){\rho_l^i}]\|_{L_t^1L^1}\\
      &\lesssim \|A(\rho_{q+1}^i)-A(\rho_l^i)\|_{C_{t,x}^0}\|\nabla\rho_{q+1}^i\|_{L^1_tL^1}+\| A(\rho_l^i)\|_{C_{t,x}^0}\|\nabla\theta_{q+1}^i\|_{L^1_tL^1}\\
     &+\|(\div A(\rho_{q+1}^i - \rho_l^i))\rho_{l}^i\|_{L^1_tL^1} +\|\div A( \rho_{q+1}^i)\|_{L_{t}^1C_{x}^0}\|\rho_{q+1}^i-\rho_l^i\|_{C_tL^1}\\
     &\lesssim \| \theta_{q+1}^i \|_{C_tL^1}(\|\rho_{q}^i\|_{C_{t,x}^1}+\|\nabla\theta_{q+1}^i\|_{L^1_tL^1})+ \|\theta_{q+1}^i \|_{L^1_tW^{1,1}}\|\rho_{q}^i\|_{C_{t,x}^0}.
\end{align*}

Case Assumption~\ref{def:ass} \textnormal{(3)} follows directly by combining the estimates from the previous two cases.

By weak Young's inequality and Sobolev embedding, we obtain 
\begin{align*}
    \|(V*\theta_{q+1}^i)\rho_{l}^i\|_{L^1_tL^1}&+\|(V*\rho_{q+1}^i)\theta_{q+1}^i \|_{L^1_tL^1}\\
    &\lesssim \|V*\theta_{q+1}^i\|_{L_t^1L^{1+1/\epsilon}}\|\rho_{l}^i\|_{C_tL^{1+\epsilon}}+\|V*\rho_{q+1}^i\|_{L_t^1L^{1+1/\epsilon}}\|\theta_{q+1}^i \|_{C_tL^{1+\epsilon}}\\
    &\lesssim \| \theta_{q+1}^i\|_{L^1_tL^{d/d-1}}\|\rho_{l}^i\|_{C_tL^{1+\epsilon}}+\| \rho_{q+1}^i\|_{L^1_tL^{d/d-1}}\|\theta_{q+1}^i \|_{C_tL^{1+\epsilon}}\\
    &\lesssim (\|\theta_{q+1}^i \|_{C_tL^{1+\epsilon}}+\| \theta_{q+1}^i\|_{L^1_tL^{d/d-1}})(\|\rho_{l}^i\|_{C_{t,x}^1}+\| \theta_{q+1}^i\|_{L^1_tL^{d/d-1}} )\\
    &\lesssim (\|\theta_{q+1}^i \|_{C_tL^{1+\epsilon}}+\| \theta_{q+1}^i\|_{L^1_tW^{1,1+\epsilon}})(\|\rho_{l}^i\|_{C_{t,x}^1}+\|  \theta_{q+1}^i\|_{L^1_tW^{1,1+\epsilon}} ).
\end{align*}
 
Then, by     the bounds in  \eqref{bd:4theq+1clp}  and \eqref{bd:4theq+1olp}    we have
\begin{align}
    \|M^{i}_{nonlin}\|_{L^1_tL^1} 
      &\lesssim ( \|\theta^{i}_{q+1}\|_{C_tL^{1+\epsilon}}+ \|\theta^{i}_{q+1}\|_{L^1_tW^{1,1+\epsilon}})  (1+\|\rho^{i}_{q}\|_{C_{t,x}^1}+\| \theta_{q+1}^i\|_{L^1_tW^{1,1+\epsilon}} )  \notag\\
      &\lesssim( \lambda_{q}^{d+4}+1)2^{-i}\lambda_{q+1}^{-2\alpha}\lesssim  2^{-i} C_0 \lambda_{q+1}^{-\alpha}\leq \frac{1}{5}C_02^{-i}\delta_{q+2},\label{arhorho-arhorho}
\end{align}
where we used  \eqref{para42}. We also chose $a$ large to absorb the universal constant.

\textbf{Linear error $M^{i}_{lin}$.} 
By the estimates  
in \eqref{bd:4rhoqc1}, \eqref{bd:4wq+1clp}, \eqref{bd:4wq+1lpr}, \eqref{bd:4theq+1pl2} and  \eqref{bd:4theq+1olp} respectively we obtain
\begin{align}
 \|M^{i}_{lin}\|_{L^1_tL^1}&\lesssim \|v_l\|_{C_{t,x}^0}\|\theta_{q+1}^{i}\|_{C_tL^1}+\|\rho^{i}_l+\theta_{q+1}^{(o,i)}\|_{C_{t,x}^0}
\|w_{q+1}\|_{L^r_tL^p}+   \| \theta_{q+1}^{(p,i)}\|_{L^{d_0'}_tL^{d_0'}}\|w_{q+1}^{(c,i)}\|_{L^{d_0}_tL^{d_0}}\notag\\
&\lesssim C_vC_0(\lambda_{q}^{d+4}+1)\lambda_{q+1}^{(12d+40)\alpha-\frac1{2N}}
\lesssim 2^{-i} C_0 \lambda_{q+1}^{-\alpha}\leq  \frac{1}{5}C_02^{-i}\delta_{q+2},\notag
\end{align}
where we used  \eqref{para42} and   conditions on the parameters to have $(12d+43)\alpha<\frac{1}{2N},2^i\leq2^{N_{q+1}}\lesssim \lambda_{q+1}^\alpha$. 

\textbf{Commutator error $M_{com}^i$.} By  the bounds in   \eqref{bd:4rhoqc1}, \eqref{para42} and Lemma \ref{lem:A}  we obtain
 \begin{align}
  \|M_{ com}^i\|_{L_t^1L^1}&\lesssim l\|v_q\|_{C_{t,x}^1}\|\rho_q^i \|_{C_{t,x}^1}+\|(A(\rho_l^i) -A(\rho_q^i)) \rho_l^i\|_{C_{t,x}^1} \notag\\ 
  &\quad+\|A(\rho_q^i)( \rho_l^i- \rho_q^i)\|_{C_{t,x}^1}+l\|A(\rho_q^i) \rho_q^i\|_{C_{t,x}^2}\notag\\
  &\quad +\|(V*(\rho_l^i-\rho_q^i))\rho_l^i\|_{L_{t}^1L^1} +\|(V*\rho_q^i)(\rho_l^i- \rho_q^i)\|_{L_{t}^1L^1} + l\|(\partial_t+\nabla)(V*\rho_q^i)\rho_q^i\|_{L_{t}^1L^1}\notag\\
 & \lesssim l\|v_q\|_{C_{t,x}^1}\|\rho_q^i\|_{C_{t,x}^1}+l (1+  \| \rho_q^i\|_{C_{t,x}^2}+\| \rho_q^i\|_{C_{t,x}^1}^2)\| \rho_q^i\|_{C_{t,x}^1}\notag\\
 &\lesssim C_0 l\lambda_q^{3d+12}\lesssim  2^{-i}C_0  \lambda_{q+1}^{-\alpha/2}\leq \frac{1}{5}C_02^{-i}\delta_{q+2},\label{bd:4mcom}
   \end{align}  
   where we used $2^i\leq2^{N_{q+1}}\lesssim \lambda_{q+1}^\alpha$, and  chose $a$ large to absorb the universal constant.
   
Summarizing all the estimates above  we obtain   \eqref{bd:4rql1}.

\subsubsection{Proof of  \eqref{bd:4rql1} for $i> N_{q+1}$}\label{sec:estmq+1big}

By the corresponding estimates in  \eqref{bd:4vql2} and \eqref{bd:4rho_0,M_0}  we have
\begin{align*}
    \|M^{i}_{q+1}\|_{L^1_tL^1}&\leq \|M^{i}_{0}\|_{L^1_tL^1}+\|v_{q+1}-\overline v\|_{L^{d_0}_tL^{d_0}}\|\rho^{i}_0-1\|_{L^{d_0'}_tL^{d_0'}}\\
    &\lesssim C_0 C_v 4^{-i}\lesssim C_0 C_v2^{-i}2^{-N_{q+1}}\lesssim C_0 C_v 2^{-i}\lambda_{q+1}^{-\alpha}\leq C_0 2^{-i}\delta_{q+2},
\end{align*}
where we  used \eqref{para42} and the conditions on the parameters  to deduce 
$\lambda_{q+1}^{\alpha}\leq2^{N_{q+1}} $ and chose $a$ large enough to absorb the universal constants.

\section{More singular interaction  case}\label{cogpss6} 
In this section, we follow the strategy developed in Section~\ref{cogpss3} to treat the singular interaction case. 
The main idea is to remove the temporal building blocks in order to obtain stronger regularity estimates for $\rho$, which in turn allows us to justify the well-posedness of the convolutions involving singular interaction kernels. The price to pay is that we only obtain estimates in the supremum-in-time norm, and spatial integrability estimates up to $L^{d-}$.
 
Let $N>0,0<\epsilon_0\leq \frac14$ and $1<d_0<d$ be given. We recall $d_0'=\frac{d_0}{d_0-1}$. The iteration is again indexed by a parameter
$
q\in\mathbb{N}_0.
$
We consider an increasing sequence
$
\{\lambda_q\}_{q\in\mathbb{N}_0}\subset\mathbb{N}
$
and a sequence
$
\{\delta_q\}_{q\in\mathbb{N}_0}\subset(0,1]
$
defined  by \begin{align*}
\lambda_q
=
a^{b^q},
\ 
q\geq0,
\ 
\delta_q
=
(\frac{\epsilon_0}{2})^{d_0+d_0'}
\lambda_1^{2\beta}
\lambda_q^{-2\beta},
\ 
q\geq1,
\ 
\delta_0=1.
\end{align*} By imposing
\begin{align}
a^{2(b-1)\beta/(d_0+d_0')}>2,
\label{ieq:6ab2}
\end{align}
we obtain
\begin{align*}
\sum_{q\geq1}\delta_q^{1/(d_0+d_0')}
&\leq
\frac{\epsilon_0}{2} 
\sum_{q\geq1}
a^{(1-q)2(b-1)\beta/(d_0+d_0')}\leq
\frac{\epsilon_0}{2}
\cdot
\frac1{1-a^{-2(b-1)\beta/(d_0+d_0')}}
<
\epsilon_0.
\end{align*}

Without loss of generality, we assume $T=1$ from now on. 
At each step $q$, we construct a family
$(v_q,\rho_q^i,M_q^i)_{1\leq i\leq {N}}$
solving the system
\begin{align}\label{eq:6qth}
\partial_t\rho_q^i
-
\div\div\bigl(A(\rho_q^i)\rho_q^i\bigr)
+
\div(v_q\rho_q^i)+\div((V*\rho_q^i)\rho_q^i)
&=
-\div M_q^i,
\\
\div v_q&=0.
\notag
\end{align} 

We define 
$
T_q:=\frac13-\sum_{1\leq r\leq q}\delta_r^{1/2}.
$ 
We initialize the iteration by setting
\begin{align*}
\rho_0^i(t,x)
&=
1+\frac{\sin2\pi x_1}{4^i}\chi_0(t),
 \ 
v_0=\overline v,\\
M_0^i(t,x)
&=
\partial_t\chi_0(t)\frac{\cos2\pi x_1}{4^i\cdot 2\pi}(1,0,\dots,0)
-\overline v(\rho_0^i-1)  +
\div( A(t,x,\rho_0^i) \rho_0^i)-(V*\rho_0^i)\rho_0^i,
\end{align*}
where $x=(x_1,\dots,x_d)$ and $\chi_0$ is a smooth function satisfying
$
\chi_0(t)=0 $ on $
 [0,\frac13 ], \chi_0(t)=1$ on $[\frac23,1 ].
$   It is easy to verify that $(v_0,\rho^{i}_0,  M^{i}_0)_{1\leq i\leq N}$ is a solution to \eqref{eq:4qth}.   By definition, we know that $\rho_0^i=1$ on $[0,T_0]$, and then $M_0^i=0$ on $[0,T_0]$  by using the facts that $\div \overline v=0$, $\div(a*1)=0$.

Then by \eqref{eq:hls}  we have 
\begin{align}
    \| M^i_0\|_{C_tL^1}&\lesssim 1+\|(a*\nabla \rho_0^i)\rho_0^i\|_{C_tL^1}+\|(a*
\rho^i_0)\nabla \rho^i_0\|_{C_tL^1}+\|(b*
\nabla\rho^i_0) \rho^i_0\|_{C_tL^1}\notag\\
&\lesssim 1+\|a*\nabla \rho_0^i\|_{C_tL^1}+\|a*
\rho^i_0\|_{C_tL^1}+\|b*\nabla
\rho^i_0\|_{C_tL^1}\notag\\
&\lesssim 1+\|\rho^i_0\|_{C_{t,x}^1}\leq
(\frac{\epsilon_0}{2})^{d_0+d_0'}C_0.\label{bd:6rho_0,M_02}
 \end{align}

With the above assumptions in hand, our main iteration relies on the first step of iteration and reads as follows:

\bp\label{prop:case6} 
Under the assumption of Theorem \ref{thm:6converge}, there exists   a choice of parameters $a,b,\beta$  such
that the following holds: Let $(v_q,\rho^{i}_q,  M^{i}_q)_{1\leq i\leq N}$ be a solution to the system \eqref{eq:6qth} satisfying  $\int\rho^{i}_q\dif x=1$,
and
\begin{align}
\|v_q\|_{C_{t,x}^1}\leq C_0^{1/d_0}\lambda_q^{2d+2},\
\|\rho^{i}_{q}\|_{C_{t,x}^1}+\lambda_q^{-d-1}\|\rho^{i}_{q}\|_{C_{t,x}^2}\leq C_0^{1/d_0'}\lambda_q^{2d+2},
\label{bd:6rhoqc1}\\
 \|M^{i}_q\|_{C_tL^1}\leq C_0 \delta_{q+1},&\label{bd:6rql1} \\
  \rho^{i}_q(t)=1, M^{i}_q(t)=0\ {\rm on}\ [0,T_q].&\label{bd:6mq=rhoq=0}
\end{align}
Then there exists 
$(v_{q+1},\rho^{i}_{q+1},  M^{i}_{q+1})_{1\leq i\leq N}$ which solves \eqref{eq:6qth} and satisfies \eqref{bd:6rhoqc1}-\eqref{bd:6mq=rhoq=0} at the level $q+1$ and for some universal constant $C_v\geq1$
\begin{align}
\|v_{q+1}-v_{q}\|_{C_tL^{d_0}}\leq
C_vC_0^{1/d_0}\delta_{q+1}^{1/d_0},\label{bd:6vq+1-vqldd}\ \
\|\rho^{i}_{q+1}-\rho^{i}_{q}\|_{C_tL^{d_0'}}\leq
C_v C_0^{1/{d_0'}} \delta_{q+1}^{1/{d_0'}}.
\end{align}
Moreover, we have for some $\epsilon>0$
\begin{align}
   \|v_{q+1}-v_q\|_{C_tL^1}\leq
\delta_{q+1}^{1/d_0},\label{bd:6vq+1-vqlpr}\\
\|\rho^{i}_{q+1}-\rho^{i}_{q}\|_{C_tW^{1,1+\epsilon}}
\leq
 4^{-i}\delta_{q+1}^{1/d_0'},\ \  \inf_{t\in [0,1]}(\rho^{i}_{q+1} - \rho^{i}_q)\geq - \delta_{q+1}^{1/d_0'} .\label{bd:6rhoq+1-rhoql1}
\end{align}
\ep

\begin{proof}[Proof of Theorem \ref{thm:6converge}]
We intend to start the iteration from 
$(v_0, \rho^{i}_0,  M^{i}_0)_{1\leq i\leq N}$ which are defined as above. 
By \eqref{bd:6rho_0,M_02},  \eqref{bd:6rhoqc1}-\eqref{bd:6mq=rhoq=0} are satisfied as $\delta_1=(\frac{\epsilon_0}{2})^{d_0+d_0'}$.
Next, we use Proposition \ref{prop:case6} to build inductively $(v_q,\rho^{i}_q, M^{i}_q)_{1\leq i\leq N}$. By \eqref{ieq:6ab2} and \eqref{bd:6vq+1-vqldd}-\eqref{bd:6rhoq+1-rhoql1}, the sequence $\{(v_q,\rho^{i}_q)_{1\leq i\leq N}\}_{q\in \N}$ is
Cauchy in $C([0,1];L^{d_0})  \times (C([0,1];L^{d_0'}\cap W^{1,1+\epsilon}))^{N}$ 
     and we denote by $(v,\rho^{i})$ its limit.

     Now we need to verify $(\rho^{i},v)$  solves   \eqref{eq:fpe:i}.   By \eqref{eq:hls}, we have for  $\epsilon>0$ small enough, $q'=1+\epsilon,q=\frac{q'}{q'-1},\frac1q=\frac1p-\frac\gamma d$, $p_1=1+\epsilon,\frac1{q_1}=\frac1{p_1}-\frac\gamma d,q_1'=\frac{q_1}{q_1-1}$, we have
\begin{align}
     \|(a*\rho)\nabla\rho&-(a*\tilde{\rho})\nabla\tilde{\rho}\|_{L^1}+\|(a*\nabla \rho )\rho -(a*\nabla\tilde{\rho})\tilde{\rho}\|_{L^1}\notag\\
     &\lesssim \|a*(\rho-\tilde{\rho})\|_{L^q}\|\nabla\rho\|_{L^{q'}}+\| a*\tilde{\rho}\|_{L^q}\|\nabla(\rho-\tilde{\rho})\|_{L^{q'}}\notag\\
     &+\|  a*\nabla(\rho-\tilde\rho)\|_{L^{q_1}}\|\rho\|_{L^{q_1'}}+\| a*\nabla\tilde\rho\|_{L^{q_1}}\|\rho-\tilde{\rho}\|_{L^{q_1'}}\notag\\
     &\lesssim \| \rho-\tilde{\rho}\|_{L^p}\|\nabla\rho\|_{L^{q'}}+\| \tilde{\rho}\|_{L^p}\|\nabla(\rho-\tilde{\rho})\|_{L^{q'}}\notag\\
     &+\|  \nabla(\rho-\tilde\rho)\|_{L^{p_1}}\|\rho\|_{L^{q_1'}}+\|  \nabla\tilde\rho\|_{L^{p_1}}\|\rho-\tilde{\rho}\|_{L^{q_1'}}.\label{bd:6arhorho-}
\end{align}
The similar calculation also holds for $b$. Then we know that 
\begin{align}
     \|(a*\rho^i_q)\nabla\rho^i_q&-(a* {\rho^i})\nabla {\rho^i}\|_{L^1}+\|(a*\nabla \rho_q^i )\rho^i_q -(a*\nabla {\rho^i}) {\rho^i}\|_{L^1}\to0\notag
\end{align}
By \eqref{bd:6rql1}, \eqref{bd:6mq=rhoq=0}  we then obtain that $(\rho^{i},v)$  solves   \eqref{eq:fpe:i}.

 By \eqref{ieq:6ab2}, \eqref{bd:6vq+1-vqldd} and \eqref{bd:6rhoq+1-rhoql1} we have
\begin{align} 
 \left|   \|\rho^i-1\|_{C_tL^1}- \| \rho^{i}_0-1\|_{C_tL^1}\right|\leq \sum_{q=0}^\infty\|\rho^{i}_{q+1}-\rho^{i}_{q}\|_{C_tL^1}\leq
4^{-i}\sum_{q=0}^\infty\delta_{q+1}^{1/{d_0'}}\leq 4^{-i-1},\\
\inf_{t\in [0,1]}\rho^i \geq \inf_{t\in [0,1]} \rho_0^i +
\sum_{q=0}^\infty \inf_{t\in [0,1]}(\rho_{q+1}^i- \rho_q^i) \geq\frac34-\sum_{q=0}^\infty \delta_{q+1}^{1/d_0'}\geq \frac12,\notag
\end{align}
at which point, by a similar argument as before,  we obtain 
 $\rho^{i}$ is nonnegative on $\mathbb{T}^d$, and do not coincide with each other. 

By \eqref{ieq:6ab2} and \eqref{bd:6vq+1-vqlpr} we obtain that $ \|v -\overline v \|_{C_tL^1}\leq  \sum_{q\geq0} \|v_{q+1}-v_q\|_{C_tL^1}\leq
\sum_{q\geq0}\delta_{q+1}^{1/d_0}\leq\epsilon_0.$ 

    By \eqref{bd:6rhoqc1}, \eqref{bd:6vq+1-vqldd} and interpolation, we have $|v| \in C_tL^{d_0(1+\epsilon)}$ for some $\epsilon > 0$ sufficiently small. Since $\rho^i \in C_tL^{d_0'}$, we deduce that $|v|^{1+\epsilon} \rho^i \in C_tL^{1}$. We conclude the proof by using the superposition principle.
\end{proof}

\section{Proof of Proposition \ref{prop:case6}}\label{sec:6convex}

The construction and the corresponding estimates are essentially the same as those in Section~\ref{sec:convex}, after formally regarding the temporal building blocks $g_{(\xi)}$ as identically equal to $1$.
 We shall choose different parameters in order to obtain higher regularity for $\rho$.

\subsection{Mollification}
For a sufficiently small parameter $\alpha\in(0,1)$ to be chosen later, we define
$
l:=\lambda_{q+1}^{-3\alpha/2}\lambda_q^{-5d/2-5/2}.
$
Then
\begin{align}
l^{-1}
\leq
\lambda_{q+1}^{2\alpha},
 \ \
l\lambda_q^{5d+5}
\leq
\lambda_{q+1}^{-\alpha} 
\ll
\delta_{q+2},
\label{para62}
\end{align}
provided that
\begin{align}
\alpha b>5d+5,
\ \
\alpha>2\beta b.
\label{alphab>6}
\end{align}

To avoid the loss of derivatives, we first mollify the stress term. As before, 
we define  
\begin{align}
v_l=(v_{q }*_x\phi_l)*_t\varphi_l,\ \ 
 \rho_l^i=(\rho_{q}^i*_x\phi_l)*_t\varphi_l,\ \ M_l^i=(M_{q }^i*_x\phi_l)*_t\varphi_l.\notag
\end{align}
Moreover,    we know that $ \rho_l^i=1 $ and $ M_l^i=0$ on $[0, T_{q+1}]$.
  
By straightforward calculations  we obtain 
\begin{align}
\partial_t \rho_l^i-\div\div(A(\rho_l^i)\rho_l^i)+\div(v_l \rho_l^i)+\div((V*\rho_l^i)\rho_l^i)&=-\div (M^i_l+M^i_{com}),\ \ \div v_l=0,\label{eq:6v_l}
\end{align} where
 \begin{align*}
M_{com}^i&:=-v_l \rho^i_l+(v_{q } \rho^i_{q})*_x\phi_l*_t\varphi_l+\div(A(\rho_l^i) \rho_l^i)-(\div(A(\rho_q^i) \rho_q^i))*_x\phi_l*_t\varphi_l\\
&\ \ \ \   -(V*\rho_l^i)\rho_l^i+((V*\rho_q^i)\rho_q^i)*_x\phi_l*_t\varphi_l.
\end{align*}

Finally, by   Sobolev embedding, we still have for every $N\geq0$,
\begin{align*}
\|M_l^i\|_{C_{t,x}^N}
&\lesssim
l^{-d-\frac13-N}
\|M_q^i\|_{C_tL^1}
\lesssim
C_0l^{-d-\frac43-N}.
\end{align*}
Here we still write $l^{-\frac43}$ so that the estimate remains formally the same as before.

\subsection{Construction of $(v_{q+1},\rho_{q+1})$}\label{sec:6defq+1}
To construct the perturbations of $v_l$ and $\rho^i_l$, we  still consider the  space-building blocks in Appendix~\ref{gij}.   We consider $2N$ disjoint sets $\Lambda^{1,n}, \Lambda^{2,n}$ defined in Appendix \ref{gij} by taking suitable rational rotations of one fixed set. We use the notation   $ \Lambda^{j,n} =  \Lambda^{1,n}$ for $j$ odd, and $ \Lambda^{j,n} =  \Lambda^{2,n}$ for $j$ even. 
For a fixed exponent
$
d_0>1,
$ and $\xi\in\Lambda:=\cup_{i=1}^N(\Lambda^{1,i}\cup \Lambda^{2,i})$
we   recall the notation
$
\Phi_{(\xi,d_0)},
\phi_{(\xi,d_0)},
\phi'_{(\xi,d_0')}
$
introduced in Appendix~\ref{gij}. 
We  recall the parameters
$
\lambda,\ r_\perp,\ r_\parallel,\  \mu,
$
introduced in Appendix~\ref{gij}. Their precise choice will be specified subsequently.

\br
 Here constructing infinitely many   solutions appears to be   nontrivial. 
Indeed, if one attempts to treat $N$ equations simultaneously, many parameters arising in the spatial building blocks (such as $n_*$) necessarily depend on $N$. 
It then becomes necessary to carefully control the growth of these parameters, which should increase at most polynomially in $N$. In the former case, this difficulty can be overcome by introducing suitable time building blocks. 
\er
To ensure that the building blocks have mutually disjoint supports, we introduce the following lemma.
\bl\label{lem:4}
$($\cite[Lemma 3]{BMS}$)$
Let $d\geq3$. Then there exist $\{\alpha_\xi\}_{\xi\in\Lambda}$ and $a>0$ such that
$$(B_a(\alpha_\xi)+\{s\xi\}_{s\in \mathbb{R}}+(\mathbb{Z}/r_\perp\lambda)^d)\cap
(B_a(\alpha_{\xi'})+\{s'\xi'\}_{s'\in \mathbb{R}}+(\mathbb{Z}/r_\perp\lambda)^d)=\varnothing,$$
for all $\xi,\xi'\in\Lambda ,\xi\neq\xi'$.
\el

Let
$
\chi\in C_c^\infty(-\frac34,\frac34)
$
be a nonnegative function such that
$
\sum_{n\in\mathbb{Z}}\chi(t-n)=1
$
for every $t\in\mathbb{R}$.
Let
$
\tilde{\chi}\in C_c^\infty(-\frac45,\frac45)
$
be a nonnegative function satisfying
$
\tilde{\chi}=1
$
on
$
[-\frac34,\frac34]
$
and
$
\sum_{n\in\mathbb{Z}}\tilde{\chi}(t-n)\leq2.
$

For $1\leq i\leq N$, we fix the parameters
$
\zeta:=20\delta_{q+2}^{-1}.
$ 
With these preparations, we define the rescaled building blocks by
\begin{align*}
W_{(\xi,n)}(x,t)
&:=
W_{(\xi,d_0)}
\Bigl(
x-\alpha_\xi,
\Bigl(\frac{n}{\zeta}\Bigr)^{1/d_0}
t
\Bigr),
\\
\Theta_{(\xi,n)}(x,t)
&:=
\Theta_{(\xi,d_0')}
\Bigl(
x-\alpha_\xi,
\Bigl(\frac{n}{\zeta}\Bigr)^{1/d_0}
t
\Bigr).
\end{align*}
By the choice of the shifts $\alpha_\xi$ in Lemma~\ref{lem:4}, the building blocks $W_{(\xi)}$ have mutually disjoint supports.
Similarly, we define
$
V_{(\xi,n)},
\Phi_{(\xi,n)},
$
and all the other quantities appearing in Appendix~\ref{gij}. 
By \eqref{eq:ptthe+} and \eqref{eq:parth}, we have
\begin{align}
\partial_t\Theta_{(\xi,n)}
+
\Bigl(\frac{n}{\zeta}\Bigr)^{1/d_0}
\div\bigl(
W_{(\xi,n)}\Theta_{(\xi,n)}
\bigr)
=
0. \notag
\end{align}

We next define the perturbations for the drift term. 
For $1\leq i\leq N$, let
\begin{align*}
w_{q+1}^{(p,i)}
&:=
\sum_{n\geq3}
\tilde{\chi}(\zeta|M_l^i|-n)
\Bigl(\frac{n}{\zeta}\Bigr)^{1/d_0}
\sum_{\xi\in\Lambda^{n,i}}
W_{(\xi,n)},\notag\\
w_{q+1}^{(c,i)}
&:=
\sum_{n\geq3}
\sum_{\xi\in\Lambda^{n,i}}
-\tilde{\chi}(\zeta|M_l^i|-n)
\Bigl(\frac{n}{\zeta}\Bigr)^{1/d_0}
\frac1{(n_*\lambda_{q+1})^2}
\nabla\Phi_{(\xi,n)}
\,\xi\cdot\nabla\psi_{(\xi,n)}
\\
&\quad
+
\nabla\bigl(
\tilde{\chi}(\zeta|M_l^i|-n)
\bigr)
\Bigl(\frac{n}{\zeta}\Bigr)^{1/d_0}
:V_{(\xi,n)}
.
\end{align*}

By \eqref{divOmega}, we obtain
\begin{align}
w_{q+1}^{(p,i)}
+
w_{q+1}^{(c,i)}
=
\sum_{n\geq3}
\sum_{\xi\in\Lambda^{n,i}}
\div\Bigl(
\tilde{\chi}(\zeta|M_l^i|-n)
\Bigl(\frac{n}{\zeta}\Bigr)^{1/d_0}
V_{(\xi,n)}
\Bigr).
\label{eq:6wq+1p+wq+1c}
\end{align}
Since $V_{(\xi,n)}$ is skew-symmetric, it follows that
$
\div\bigl(
w_{q+1}^{(p,i)}
+
w_{q+1}^{(c,i)}
\bigr)=0.
$

Finally, we define the total perturbation and the new velocity field by
\begin{align}
w_{q+1}
:=
\sum_{i=1}^{N}
\bigl(
w_{q+1}^{(p,i)}
+
w_{q+1}^{(c,i)}
\bigr),
\qquad
v_{q+1}
:=
v_l+w_{q+1}.
\notag
\end{align}
Then $v_{q+1}$ is mean-zero and divergence-free.
Moreover, since $M_l^i(t)=0$ on $[0,T_{q+1}],$ the perturbation $w_{q+1}$  vanishes  on $[0,T_{q+1}]$. 
 
We next define the perturbations for the density functions. 
For $1\leq i\leq N$, we set
\begin{align}
\theta_{q+1}^{(p,i)}
:=
&\sum_{n\geq3}
\chi(\zeta|M_l^i|-n)
\Bigl(\frac{n}{\zeta}\Bigr)^{1/d_0'}
\sum_{\xi\in\Lambda^{n,i}}
\Gamma_\xi\Bigl(\frac{M_l^i}{|M_l^i|}\Bigr)
\Theta_{(\xi,n)}
,
\notag\\
\theta_{q+1}^{(c,i)}
:=
&-
\int_{\mathbb{T}^d}
\theta_{q+1}^{(p,i)}\,\dif x.
\notag
\end{align}
By an argument analogous to \eqref{wq+1good}, we obtain
\begin{align}
&
\partial_t\theta_{q+1}^{(p,i)}
+
\div\bigl(
w_{q+1}^{(p,i)}
\theta_{q+1}^{(p,i)}
-
M_l^i
\bigr)
\notag\\
=
&
\sum_{n\geq3}
\sum_{\xi\in\Lambda^{n,i}}
\partial_t
\Bigl[
\chi(\zeta|M_l^i|-n)
\Bigl(\frac{n}{\zeta}\Bigr)^{1/d_0'}
\Gamma_\xi\Bigl(\frac{M_l^i}{|M_l^i|}\Bigr)
\Bigr]
\Theta_{(\xi,n)}+
\div
\Bigl(
\sum_{n\geq3}
\chi(\zeta|M_l^i|-n)
\frac{n}{\zeta}
\frac{M_l^i}{|M_l^i|}
-
M_l^i
\Bigr)
\notag\\
&+
\sum_{n\geq3}
\sum_{\xi\in\Lambda^{n,i}}
\nabla
\Bigl[
\chi(\zeta|M_l^i|-n)
\frac{n}{\zeta}
\Gamma_\xi\Bigl(\frac{M_l^i}{|M_l^i|}\Bigr)
\Bigr]
\mathbb{P}_{\neq0}
\bigl(
W_{(\xi,n)}
\Theta_{(\xi,n)}
\bigr).
\label{6wq+1good}
\end{align}

We now define, for every $1\leq i\leq N$,
\begin{align}
\theta_{q+1}^i
:=
\theta_{q+1}^{(p,i)}
+
\theta_{q+1}^{(c,i)}
,
\qquad
\rho_{q+1}^i
:=
\rho_l^i+\theta_{q+1}^i.
\notag
\end{align}
By construction,
$
\int_{\mathbb{T}^d}\rho_{q+1}^i\,\dif x=1
$.
Since
$
M_l^i(t)=0
$
on
$
[0,T_{q+1}],
$
we know $\theta_{q+1}^i(t)=0,\rho_{q+1}^i(t)=1
$
on
$
[0,T_{q+1}]
$.

Moreover, by Lemma \ref{lem:4}, we obtain for
$
1\leq i\leq N 
$
that
\begin{align}
\div\bigl(
w_{q+1}\theta_{q+1}^i
\bigr)
=
\div\(
(w_{q+1}^{(p,i)}+w_{q+1}^{(c,i)})
\theta_{q+1}^{(p,i)}
\).\notag
\end{align}

\subsection{Construction of the stress terms $M^{i}_{q+1}$}\label{sec:6defmq+1}

From  the definition of the perturbations, as in Section \ref{sec:4defmq+1},  
we   define 
 $$-{M}^i_{q+1}:=M^{i}_{osc}+M^{i}_{nonlin}+M^{i}_{lin}-M_{com}^i.$$
 Here, we define the nonlinear error and the linear error   by
\begin{align*} 
M^{i}_{nonlin}:&= -\div( (a*\rho^{i}_{q+1}) \rho^{i}_{q+1}-(a*\rho^{i}_{l}) \rho^{i}_{l} )-(b*\nabla\rho_{q+1}^i)\rho_{q+1}^i+(b*\nabla\rho_{l}^i)\rho_{l}^i,\\
M^{i}_{lin}:&=v_l\theta^{i}_{q+1}+w_{q+1}\rho^{i}_l+w_{q+1}^{(c,i)}\theta_{q+1}^{(p,i)}.
\end{align*}
 We define the oscillation error  $M^{i}_{osc}:=M^{i}_{osc,t}+M^{i}_{osc,x}+M^{i}_{osc,c}$ as
\begin{align*}
   M^{i}_{osc,t}&:=\sum_{n\geq3}\sum_{\xi\in\Lambda^{n,i}}\mathcal{R}_{1}\(\partial_t[\chi(\zeta|M^{i}_l|-n)\(\frac{n}{\zeta}\)^{1/d_0'}\Gamma_{\xi}\(\frac{M^{i}_l}{|M^{i}_l|}\)\Theta_{(\xi,n)})\),\\
M^{i}_{osc,x}&:=\sum_{n\geq3}\sum_{\xi\in\Lambda^{n,i}}\mathcal{B}_{1}\(\nabla[\chi(\zeta|M^{i}_l|-n)\frac{n}{\zeta}\Gamma_{\xi}\(\frac{M^{i}_l}{|M^{i}_l|}\)],\mathbb{P}_{\neq0}(W_{(\xi,n)}\Theta_{(\xi,n)})\),\\
   M^{i}_{osc,c}&:=\sum_{n\geq3}\chi(\zeta|M^{i}_l|-n)\frac{n}{\zeta}\frac{M^{i}_l}{|M^{i}_l|}- M^{i}_l.
\end{align*}
 
Since $M^{i}_l(t)=0,w_{q+1}(t)=0,\theta^i_{q+1}(t)=0$ on $[0,T_{q+1}]$,  we have $M_{q+1}^i(t)=0$ on $[0,T_{q+1}]$, which implies  \eqref{bd:6mq=rhoq=0} for $M_{q+1}^i$.

\subsection{ Proof of Proposition \ref{prop:case6}}   
\subsubsection{Choice of parameters} 
Regarding the parameters of the building blocks, we define
  
\begin{align}
  \lambda=\lambda_{q+1},\ r_\perp=\lambda_{q+1}^{-1+\frac1{M}},\ r_\parallel=\lambda_{q+1}^{-1+\frac2{M}},\   \mu=r_\perp^{-\frac{d-1}{d_0}}r_\parallel^{-\frac1{d_0}}, \notag
\end{align}
where $M>0$ is a sufficiently large integer satisfying $(-1+\frac2M)d(\frac{\gamma}{d}-\frac{1}{d_0'})<-\frac1M,(-1+\frac2M)\frac{d}{d_0'}<-\frac1M,1+(-1+\frac2M)\frac{d}{d_0}<-\frac1M$.  
With these choices,  we obtain
\begin{align}
  r_\perp^{\frac{d-1}{d/\gamma}-\frac{d-1}{d_0'}} r_\parallel^{\frac1{d/\gamma}-\frac{1}{d_0'}},\    r_\perp^{ d-1 -\frac{d-1}{d_0}} r_\parallel^{1-\frac{1}{d_0}},\ \lambda r_\perp^{\frac{d-1}{1}-\frac{d-1}{d_0'}} r_\parallel^{\frac{1}{1}-\frac{1}{d_0'}}\leq\lambda^{-\frac1M}.\label{bd:paralam6}
\end{align}

In the sequel, we shall also require \eqref{alphab>6} together with \begin{align}
   \lambda_{q}^{2d+2}\leq\lambda_{q+1}^\alpha,\ \ (12d+41)\alpha<\frac1{M}.\notag
 \end{align} 
 
These conditions can be achieved as follows:
first choose $\alpha>0$ sufficiently small such that
$
(12d+41)\alpha<\frac1{M},
$
then choose
$
b\in2N\mathbb{N}
$
sufficiently large so that
$
b>\frac{5d+5}{\alpha},
$
and finally choose $\beta>0$ sufficiently small such that
$
\alpha>2\beta b.
$
At the end, we choose $a$ sufficiently large in order to absorb all implicit and universal constants arising in the subsequent estimates, and to guarantee the validity of \eqref{ieq:6ab2}.

Finally, we record the following estimate for the amplitude functions.

\bl\label{lem:6chi}$($\cite[Proposition 5.2]{LRZ25}$)$.
 For $ M,K\in\mathbb{N}_0,1\leq i\leq N$  we have
\begin{align*}
    \sum_{n\geq3} \(\frac{n}{\zeta}\)^M\(\|\chi(\zeta|M^{i}_l|-n)\|_{C_{t,x}^K}+ \|\tilde{\chi}(\zeta|M^{i}_l|-n)\|_{C_{t,x}^K}\)&\lesssim l^{- (d+4)K-(d+2)(M+1)},\\
     \sum_{n\geq3}\sum_{\xi\in\Lambda^{n,i}}\(\frac{n}{\zeta}\)^M\norm{\chi(\zeta|M^{i}_l|-n)\Gamma_\xi\(\frac{M^{i}_l}{|M^{i}_l|}\)}_{C_{t,x}^K}& \lesssim l^{-(2d+8)K-(d+2)(M+1)}.
\end{align*}
\el 

\subsubsection{Proof of  \eqref{bd:6vq+1-vqldd}  for $v_{q+1}-v_q$.}
We first estimate the principal perturbations $w_{q+1}^{(p,i)}$ for   $1\leq i\leq N $ in the $C_tL^{d_0}$-norm. By Cauchy's inequality    we have
\begin{align}
|w_{q+1}^{(p,i)}|^{d_0} 
&\lesssim \sum_{n\geq3}\tilde{\chi}(\zeta|M^{i}_l|-n)\frac{n}{\zeta}\sum_{\xi\in\Lambda^{n,i}}\left|W_{(\xi,n,i)} \right|^{d_0}.\notag
\end{align} 

By applying the generalized H\"older inequality of 
Theorem \ref{ihiot} in spatial direction, together with the estimates for the building blocks in \eqref{int4} and Lemma \ref{lem:6chi} we deduce 
\begin{align}
\|w_{q+1}^{(p,i)}(t)\|^{d_0}_{L^{d_0}}
&\lesssim\sum_{n\geq3}\norm{\tilde{\chi}(\zeta|M^{i}_l(t)|-n)\frac{n}{\zeta}}_{L^1}\sum_{\xi\in\Lambda^{n,i}}\|W_{(\xi,n,i)}\|_{C_tL^{d_0}}^{d_0}\notag\\ 
&\quad+(r_\perp\lambda_{q+1})^{-1}\norm{\tilde{\chi}(\zeta|M^{i}_l(t)|-n)\frac{n}{\zeta}}_{C_{t,x}^1}\sum_{\xi\in\Lambda^{n,i}}\|W_{(\xi,n,i)}\|_{C_tL^{d_0}}^{d_0} \notag\\
&\lesssim\norm{ M^{i}_l(t)+|\zeta|^{-1}
}_{L^1}+l^{-3d-8}\lambda_{q+1}^{-\frac{1}{M}} \lesssim \|M^{i}_l(t)\|_{L^1}+C_0 \delta_{q+1} \lesssim C_0\delta_{q+1} 
,\notag
\end{align}
where we used   the fact that $\sum_{n\in\mathbb{Z}} \tilde{\chi}(t - n) \leq2$, and used the conditions on the parameters to have $(6d+16)\alpha-\frac{1}{M}<-\alpha<-2\beta$.  

For the general $C_tL^m$-norm with $m\in[1,\infty]$, by the estimates for the building blocks in \eqref{int2}-\eqref{int4} 
and the estimate for the amplitude function in  Lemma \ref{lem:6chi} we obtain
\begin{align}
\|w_{q+1}^{(p,i)}\|_{C_tL^m}&\lesssim\sum_{n\geq3}\sum_{\xi\in\Lambda^{n,i}}\norm{\tilde{\chi}(\zeta|M^{i}_l|-n)\(\frac{n}{\zeta}\)^{1/d_0}}_{C_{t,x}^0}\|W_{(\xi,n,i)}\|_{C_tL^m}\notag\\
&\lesssim l^{-2d-4}r_\perp^{\frac{d-1}{m}-\frac{d-1}{d_0}} r_\parallel^{\frac{1}{m}-\frac{1}{d_0}},\label{bd:6wq+1plp}\\
\|w_{q+1}^{(c,i)}\|_{C_tL^m}&\lesssim\sum_{n\geq3}\sum_{\xi\in\Lambda^{n,i}}\norm{\tilde{\chi}(\zeta|M^{i}_l|-n)\(\frac{n}{\zeta}\)^{1/d_0}}_{C^1_{t,x}}\notag\\ 
&\quad\quad \times\(\frac{1}{\lambda_{q+1}^2}\|\nabla\Phi_{(\xi,n,i)} \xi\cdot\nabla\psi_{(\xi,n,i)} \|_{C_tL^m}+\|V_{(\xi,n,i)}\|_{C_tL^m}\)\notag\\
&\lesssim l^{-3d-8}r_\perp^{\frac{d-1}{m}-\frac{d-1}{d_0}} r_\parallel^{\frac{1}{m}-\frac{1}{d_0}}\frac{r_\perp}{ r_\parallel}.\label{bd:6wq+1clp}
\end{align}

With these estimates, combining with the choice of parameters in  \eqref{para62}  we obtain 
\begin{align}
\|w_{q+1}\|_{C_tL^{d_0}}
&\leq \frac{C_v}4C_0^{1/d_0}\delta_{q+1}^{1/d_0}+C\lambda_{q+1}^{(6d+17)\alpha-\frac1M} \leq \frac{C_v}2C_0^{1/d_0}\delta_{q+1}^{1/d_0},\label{bd:6wq+1l2}\end{align}
where we used  conditions on the parameters to have
${(6d+18)\alpha<\frac{1}{M}},N \lesssim\lambda_{q+1}^{\alpha}$ and chose $a$ large enough to absorb the universal constant. 
The above inequality   yields that \eqref{bd:6vq+1-vqldd} holds for $v_{q+1}-v_q$:
\begin{align*}
\|v_{q+1}-v_{q}\|_{C_tL^{d_0}}&\leq\|w_{q+1} \|_{C_tL^{d_0}}+\|v_{l}-v_{q}\|_{C_tL^{d_0}}\\
&\leq \frac12
C_vC_0^{1/d_0}\delta_{q+1}^{1/d_0}+lC_0^{1/d_0}\lambda_q^{d+4} \leq
C_vC_0^{1/d_0}\delta_{q+1}^{1/d_0}.
\end{align*}

\subsubsection{Proof of  \eqref{bd:6vq+1-vqlpr}}

Combining  the bounds  \eqref{bd:6wq+1plp} and \eqref{bd:6wq+1clp} above we obtain 
\begin{align*}
 \|v_{q+1}-v_q\|_{C_tL^{1}}&\lesssim  \|w_{q+1}\|_{C_tL^{1}} +l\|v_q\|_{C_{t,x}^1}\notag \\
&\lesssim  N l^{-3d-8}r_\perp^{d-1-\frac{d-1}{d_0}} r_\parallel^{1-\frac{1}{d_0}}+l \lambda_q^{d+4}\lesssim \lambda_{q+1}^{(6d+17)\alpha-\frac1M}+\lambda_{q+1}^{-\alpha}
\leq\delta_{q+1}^{1/d_0}. 
\end{align*}
  Here we used \eqref{para62}, \eqref{bd:paralam6} and  conditions on the parameters to have 
${(6d+18)\alpha<\frac{1}{M}},N \lesssim\lambda_{q+1}^{\alpha}$. Then we chose $a$ large enough to absorb the universal constant.

\subsubsection{Proof of  \eqref{bd:6rhoqc1} for $v_{q+1}$.}
 By the estimates for the building blocks in \eqref{int4},  \eqref{eq:6wq+1p+wq+1c} and the estimates for the amplitude functions in Lemma \ref{lem:6chi} we have  
\begin{align}
\|w_{q+1}\|_{C_{t,x}^1}&\lesssim\sum_{i=1}^{N}\sum_{n\geq3}\sum_{\xi\in\Lambda^{n,i}}\norm{\tilde{\chi}(\zeta|M^{i}_l|-n)\(\frac{n}{\zeta}\)^{1/d_0}}_{C_{t,x}^{2}}\|\nabla V_{(\xi,n,i)}\|_{C_{t,x}^1} \notag\\
&\lesssim Nl^{-4d-12}\lambda_{q+1}\mu r_\parallel^{-\frac{1}{d_0}}r_\perp^{-\frac{d-1}{d_0}} \lesssim N_{q+1}\lambda_{q+1}^{(8d+24)\alpha+2d+1}.\notag
\end{align}
 Thus, by $ (8d+25)\alpha<\frac{1}{2}$ we obtain the following
 \begin{align*}
\|v_{q+1}\|_{C_{t,x}^1}\leq\|v_{l}\|_{C_{t,x}^1}+\|w_{q+1}\|_{C_{t,x}^1}\leq C_0^{1/d_0}\lambda_{q}^{2d+2}+\frac12\lambda_{q+1}^{2d+2}\leq C_0^{1/d_0}\lambda_{q+1}^{2d+2},
 \end{align*} where we chose $a$ large enough to absorb the universal constant.

\subsubsection{Proof of  \eqref{bd:6vq+1-vqldd} for $\rho^{i}_{q+1}-\rho_q^i$} 
Similarly as before, we first estimate the principal perturbations  $\theta^{(p,i)}_{q+1}$ in $C_tL^{d_0'}$. 
By the same argument as in \eqref{bd:4wq+1pl2}, we have 
\begin{align}
    \|\theta_{q+1}^{(p,i)}\|_{C_tL^{d_0'}}^{d_0'}&\lesssim \|M^{i}_l\|_{C_tL^1}+ C_0\delta_{q+1} 
    \lesssim  C_0\delta_{q+1}. \notag
\end{align}

For general $C_tL^m$-norm with $m\in[1,\infty]$, by the estimates for the building blocks in \eqref{int4theta}  and Lemma \ref{lem:6chi} we obtain
\begin{align}
\|\theta_{q+1}^{(p,i)} \|_{C_tL^{m}}&\lesssim\sum_{n\geq3}\sum_{\xi\in\Lambda^{n,i}}\norm{\chi(\zeta|M^{i}_l|-n)\(\frac{n}{\zeta}\)^{1/d_0'}\Gamma_{\xi}\(\frac{M^{i}_l}{|M^{i}_l|}\)}_{C_{t,x}^0}\|\Theta_{(\xi,n,i)} \|_{C_tL^{m}}\notag\\ 
&\lesssim l^{-2d-4}r_\perp^{\frac{d-1}{m}-\frac{d-1}{d_0'}} r_\parallel^{\frac{1}{m}-\frac{1}{d_0'}}.\label{bd:6theq+1plp}
\end{align}
Moreover, by   \eqref{bd:paralam6} we have
\begin{align}
\|\theta_{q+1}^{(c,i)} \|_{C_t}&\lesssim\|\theta_{q+1}^{(p,i)} \|_{C_tL^{d/\gamma}}
\lesssim l^{-2d-4}r_\perp^{\frac{d-1}{d/\gamma}-\frac{d-1}{d_0'}} r_\parallel^{\frac1{d/\gamma}-\frac{1}{d_0'}}\lesssim  \lambda_{q+1}^{(4d+8)\alpha-\frac1M}\lesssim4^{-i} \lambda_{q+1}^{-2\alpha},\label{bd:6theq+1clp}
\end{align}
where we used  conditions on the parameters to have $(4d+11)\alpha<\frac1{M},4^i\leq4^{N}\lesssim\lambda_{q+1}^{\alpha}$ and chose $a$ large enough to absorb the universal constant.
Then, combining the above estimates together, we obtain
\begin{align}
\|\rho^{i}_{q+1}-\rho^{i}_q\|_{C_tL^{d_0'}}
&\leq\|\theta^{i}_{q+1}\|_{C_tL^{d_0'}}+\|\rho^{i}_{l}-\rho^{i}_q\|_{C_tL^{d_0'}}\notag\\
&\leq\frac12 C_v C_0^{1/{d_0'}} \delta_{q+1}^{1/{d_0'}}+C\lambda_{q+1}^{-\alpha}+lC_0^{1/{d_0'}}\lambda_q^{d+4}\leq C_v C_0^{1/{d_0'}}\delta_{q+1}^{1/{d_0'}},\notag
\end{align}
which implies \eqref{bd:6vq+1-vqldd} for $\rho^{i}_{q+1}-\rho_q^i$. Here we chose $a$ large enough to absorb the universal constant.  

\subsubsection{Proof of   \eqref{bd:6rhoq+1-rhoql1}}
We first estimate $\theta_{q+1}^{(p,i)}$ in $W^{1,1+\epsilon}$-norm for some $\epsilon>0$ small enough. By the bounds for the building blocks in \eqref{int4theta}   and the bounds for the amplitude functions in Lemma \ref{lem:6chi} we have
\begin{align}
\|\theta_{q+1}^{(p,i)}\|_{C_tW^{1,1+\epsilon}}&\lesssim\sum_{n\geq3}\sum_{\xi\in\Lambda^{n,i}}\norm{\chi(\zeta|M^{i}_l|-n)\(\frac{n}{\zeta}\)^{1/d_0'}\Gamma_{\xi}\(\frac{M^{i}_l}{|M^{i}_l|}\)}_{C_{t,x}^1}\|\Theta_{(\xi,n,i)} \|_{C_tW^{1,1+\epsilon}} \notag\\ 
&\lesssim l^{-4d-12}\lambda_{q+1}r_\parallel^{\frac1{1+\epsilon}-\frac{1}{d_0'}}r_\perp^{\frac{d-1}{1+\epsilon}-\frac{d-1}{d_0'}}
\lesssim\lambda_{q+1}^{(8d+24)\alpha-\frac1M+d\epsilon}\lesssim \lambda_{q+1}^{-2\alpha},\label{bd:6thteaq+1pw11}
\end{align}
where we  chose $\epsilon>0$ small enough such that $d\epsilon<\alpha$. We also used  conditions on the parameters to have $(8d+27)\alpha<\frac1M$. 

By the fact that $\theta_{q+1}^{(p,i)}$ is non-negative, and the choice of parameters in \eqref{para62}, \eqref{bd:paralam6}  we have
\begin{align}
\|\rho^{i}_{q+1}-\rho^{i}_q\|_{C_tW^{1+\epsilon}}
&\lesssim \|\theta_{q+1}^{(p,i)}\|_{C_tW^{1,1+\epsilon}}+l\|\rho_q^i\|_{C_{t,x}^2}\lesssim4^{-i}\lambda_{q+1}^{-\alpha}\leq 4^{-i}\delta_{q+1}^{1/d_0'},\notag\\
    \inf_{t\in [0,1]}(\rho^{i}_{q+1} - \rho^{i}_q)&\geq-\|\theta_{q+1}^{(c,i)}\|_{C_{t,x}^0} -l\|\rho_q^i\|_{C_{t,x}^1}\geq -C\lambda_{q+1}^{-\alpha} \geq -\delta_{q+1}^{1/d_0'},
\end{align}
which yields \eqref{bd:6rhoq+1-rhoql1}.  Here we chose $a$ large enough to absorb the universal constant, including $4^i\leq 4^N$. 

\subsubsection{Proof of  \eqref{bd:6rhoqc1} for $\rho^{i}_{q+1}$}
  By \eqref{int4theta} and Lemma \ref{lem:6chi} we have for $j=1,2$
\begin{align*}
\|\theta_{q+1}^{(c,i)}\|_{C_{t}^j}\lesssim\|\theta_{q+1}^{(p,i)} \|_{C_{t,x}^j} 
&\lesssim\sum_{n\geq3}\sum_{\xi\in\Lambda^{n,i}}\norm{\chi(\zeta|M^{i}_l|-n)\(\frac{n}{\zeta}\)^{1/d_0'}\Gamma_{\xi}\(\frac{M^{i}_l}{|M^{i}_l|}\)}_{C_{t,x}^j}\|\Theta_{(\xi,n,i)} \|_{C_{t,x}^j} \notag\\ 
&\lesssim l^{-6d-20}(\lambda_{q+1}\mu)^j r_\parallel^{-\frac{1}{d_0'}}r_\perp^{-\frac{d-1}{d_0'}} \lesssim \lambda_{q+1}^{(12d+40)\alpha+d+j(d+1)}.\notag
\end{align*}
By choosing $(12d+42)\alpha<\frac{1}{2}$ we deduce 
\begin{align}
    \|\rho^{i}_{q+1} \|_{C_{t,x}^1}+\lambda_{q+1}^{-d-1}\|\rho^{i}_{q+1}\|_{C_{t,x}^2}
    \leq C_0^{1/d_0'}\lambda_{q}^{2d+2}+ \frac12\lambda_{q+1}^{2d+2}\leq C_0^{1/d_0'}\lambda_{q+1}^{2d+2},\notag
\end{align}
which implies  \eqref{bd:6rhoqc1} for $\rho^{i}_{q+1},1\leq i\leq N$.  

\subsubsection{Proof of  \eqref{bd:6rql1} }  We now estimate each term in the definition of $M_{q+1}^i$ separately.

\textbf{Oscillation error $M^{i}_{osc}$.} By Lemma \ref{bb_1}, the estimates for the amplitude functions and for the building blocks in  Lemma \ref{lem:6chi}, \eqref{int4theta}    we obtain
\begin{align}
    \|M^{i}_{osc,t}\|_{C_tL^1}
    &\lesssim \sum_{n\geq3}\sum_{\xi\in\Lambda^{n,i}}\norm{\chi(\zeta|M^{i}_l|-n)\(\frac{n}{\zeta}\)^{1/d_0'}\Gamma_{\xi}\(\frac{M^{i}_l}{|M^{i}_l|}\)}_{C_{t,x}^1} \|\Theta_{(\xi,n,i)}\|_{C_tL^1}\notag\\
    &\lesssim l^{-4d-12}r_\perp^{d-1-\frac{d-1}{d_0'}}r_\parallel^{1-\frac{1}{d_0'}} \lesssim \lambda_{q+1}^{(8d+24)\alpha-\frac 1{M}}\lesssim  \lambda_{q+1}^{-\alpha},\notag
\end{align}
where we used the choice of parameters in 
\eqref{para62},  \eqref{bd:paralam6} and  conditions for the parameters to have $(8d+25)\alpha<\frac 1{M}$.

We observe that $W_{(\xi,n,i)}\Theta_{(\xi,n,i)}$ is $(\mathbb{T}/r_\perp\lambda_{q+1})^d$-periodic. So by  Theorem \ref{bb1_1},  the estimates for the amplitude functions and for the building blocks  in Lemma \ref{lem:6chi}, \eqref{int4} and \eqref{int4theta} respectively    we have
\begin{align}
\|M^{i}_{osc,x} \|_{C_tL^1}
&\lesssim\sum_{n\geq3}\sum_{\xi\in\Lambda^{n,i}}\norm{\chi(\zeta|M^{i}_l|-n)\frac{n}{\zeta}\Gamma_{\xi}\(\frac{M^{i}_l}{|M^{i}_l|}\)}_{C_{t,x}^2} (r_\perp\lambda_{q+1})^{-1}\|W_{(\xi,n,i)}\Theta_{(\xi,n,i)}\|_{C_tL^{1}} \notag\\
&\lesssim l^{-6d-20}(r_\perp\lambda_{q+1})^{-1}\lesssim \lambda_{q+1}^{(12d+40)\alpha-\frac{1}{M}}
\lesssim  \lambda_{q+1}^{-\alpha}, \notag
\end{align}
where we used the choice of parameters in \eqref{para62}, \eqref{bd:paralam6} and  conditions on the parameters to have $(12d+41)\alpha<\frac{1}{M} $.

For the stress term $M^{i}_{osc,c}$, it holds
\begin{align}
     \left| M_{osc,c}^i\right|
     &\leq \frac{3}{\zeta}+\sum_{n\geq3}\chi(\zeta|M_l^i|-n)\left|\frac{n}{\zeta}- |M_l^i|\right|\leq \frac{3}{20}\delta_{q+2}+\frac{1}{20}\delta_{q+2}
     \leq \frac15C_0 \delta_{q+2}.\notag
\end{align}

In summary, we have
\begin{align}
    \|M^{i}_{osc}\|_{C_tL^1} \leq C \lambda_{q+1}^{-\alpha}+ \frac15C_0 \delta_{q+2}\leq \frac{2}{5}C_0 \delta_{q+2},\notag
\end{align}
where we chose $a$ large to absorb the universal constant.

\textbf{Nonlinear error $M_{nonlin}^i$.} 
 By \eqref{bd:6arhorho-},  
  \eqref{bd:6theq+1clp} and \eqref{bd:6thteaq+1pw11} we have 
\begin{align*}
    \|M^i_{nonlin}\|_{C_tL^1} 
     &\lesssim \| \theta_{q+1}^i \|_{C_tL^{d/\gamma}}\|\nabla\rho_{q+1}^i\|_{C_tL^{1+\epsilon}}+\| \rho_l^i\|_{C_tL^{d/\gamma}}\|\nabla \theta_{q+1}^i\|_{C_tL^{1+\epsilon}}\\
     &+\|  \nabla \theta_{q+1}^i\|_{C_tL^{1+\epsilon}}\|\rho_{q+1}^i\|_{C_tL^{d/\gamma}}+\|  \nabla\rho_l^i\|_{C_tL^{1+\epsilon}}\|\theta_{q+1}^i\|_{C_tL^{d/\gamma}}\\
     &\lesssim (\| \theta_{q+1}^i \|_{C_tL^{d/\gamma}}+\|  \nabla \theta_{q+1}^i\|_{C_tL^{1+\epsilon}})(\| \theta_{q+1}^i \|_{C_tL^{d/\gamma}}+\|  \nabla \theta_{q+1}^i\|_{C_tL^{1+\epsilon}}+\| \rho_q^i\|_{C_{t,x}^1}) \\
     &\lesssim C_0\lambda_{q+1}^{-2\alpha}(\lambda_{q+1}^{-\alpha}+\lambda_q^{2d+2})\lesssim    C_0 \lambda_{q+1}^{-\alpha}\leq \frac{1}{5}C_0 \delta_{q+2}.\notag
\end{align*}

\textbf{Linear error $M^{i}_{lin}$.} 
By the estimates  
in \eqref{bd:6rhoqc1}, \eqref{bd:6wq+1plp}, \eqref{bd:6wq+1clp},  \eqref{bd:6theq+1plp}  respectively we obtain
\begin{align}
 \|M^{i}_{lin}\|_{C_tL^1}&\lesssim \|v_l\|_{C_{t,x}^0}\|\theta_{q+1}^{i}\|_{C_tL^1}+\|\rho^{i}_l \|_{C_{t,x}^0}
\|w_{q+1}\|_{C_tL^1}+   \| \theta_{q+1}^{(p,i)}\|_{C_tL^{d_0'}}\|w_{q+1}^{(c,i)}\|_{C_tL^{d_0}}\notag\\
&\lesssim C_vC_0(\lambda_{q}^{2d+2}+1)\lambda_{q+1}^{(6d+16)\alpha-\frac1{N}}
\lesssim  C_0 \lambda_{q+1}^{-\alpha}\leq  \frac{1}{5}C_0\delta_{q+2},\notag
\end{align}
where we used  \eqref{para62} and   conditions on the parameters to have $(6d+18)\alpha<\frac{1}{N}$. 

\textbf{Commutator error $M_{com}^i$.} By  the bounds in   \eqref{bd:6rhoqc1}, \eqref{bd:6arhorho-} and \eqref{para62}   we obtain
 \begin{align}
  \|M_{ com}^i\|_{C_tL^1}
 & \lesssim l\|v_q\|_{C_{t,x}^1}\|\rho_q^i\|_{C_{t,x}^1}+l  \| \rho_q^i\|_{C_{t,x}^2}\| \rho_q^i\|_{C_{t,x}^1}\lesssim C_0 l\lambda_q^{5d+5}\lesssim C_0  \lambda_{q+1}^{-\alpha}\leq \frac{1}{5}C_0\delta_{q+2}.\label{bd:6mcom}
   \end{align}   
   
Summarizing all the estimates above  we obtain   \eqref{bd:6rql1}.

\section{Non-uniqueness of stationary solutions}\label{cogpss4}
In this section, we prove Theorem~\ref{thm:sta} by constructing non-unique stationary solutions. 
For any time-independent diffusion coefficient $\sigma$ satisfying Assumption~\ref{def:ass}, or the singular case, we apply the convex integration method to the stationary Fokker--Planck equation \eqref{eq:fpe:sta}. 
Compared with previous works, the main novelty of the argument lies in the treatment of the time-independent setting. Accordingly, we mainly emphasize the ideas and estimates specific to the stationary case. 

Let $N>0,0<\epsilon_0\leq \frac14$ and $1<d_0<d-1$ be given. We recall $d_0'=\frac{d_0}{d_0-1}$. The iteration is again indexed by a parameter
$
q\in\mathbb{N}_0.
$
We consider an increasing sequence
$
\{\lambda_q\}_{q\in\mathbb{N}_0}\subset\mathbb{N}
$
and a sequence
$
\{\delta_q\}_{q\in\mathbb{N}_0}\subset(0,1]
$
defined  by \begin{align*}
\lambda_q
=
a^{b^q},
\ 
q\geq0,
\ 
\delta_q
=
(\frac{\epsilon_0}{2})^{d_0+d_0'}
\lambda_1^{2\beta}
\lambda_q^{-2\beta},
\ 
q\geq1,
\ 
\delta_0=1.
\end{align*}  By imposing
\begin{align}
a^{2(b-1)\beta/(d_0+d_0')}>2,
\label{ieq:5ab2}
\end{align}
we obtain
\begin{align*}
\sum_{q\geq1}\delta_q^{1/(d_0+d_0')}
&\leq
\frac{\epsilon_0}{2} 
\sum_{q\geq1}
a^{(1-q)2(b-1)\beta/(d_0+d_0')}\leq
\frac{\epsilon_0}{2}
\cdot
\frac1{1-a^{-2(b-1)\beta/(d_0+d_0')}}
<
\epsilon_0.
\end{align*}

At each step $q$, we construct  
$
(v_q,\rho^i_q,M^i_q)_{1\leq i\leq N}
$
solving the stationary system
\begin{align}
-\div\div(A(\rho^i_q)\rho^i_q)
+
\div(v_q\rho^i_q)+\div((V*\rho_q^i)\rho_q^i)
&=
-\div M^i_q,
\qquad
\div v_q=0,
\label{eq:5qth}
\end{align}
where we recall the notation
$
A(\rho)=(\sigma\sigma^T)(\rho).
$

We initialize the iteration by defining
$
(v_0,\rho^i_0,M^i_0)
$
through
\begin{align*}
\rho^i_0(x)
=
1+\frac{\sin 2\pi x_1}{4^i},
\ 
v_0(x)=\overline v(x),
\ 
M^i_0(x)
=-(\overline v+V*\rho_0^i)\rho^i_0(x)+\div(
A(\rho^i_0)\rho^i_0)(x),
\end{align*}
where
$
x=(x_1,\dots,x_d).
$
Then, by Assumption~\ref{def:ass}, there exists a sufficiently large constant $C_0>0$ such that
\begin{align}
\|\rho^i_0\|_{C_x^2}
\leq
C_0^{1/d_0'},\ \|v_0\|_{C_{x}^1}\leq C_0^{1/d_0},\ 
\|M^i_0\|_{L^1}
\leq
(\frac{\epsilon_0}{2})^{d_0+d_0'}C_0.
\label{bd:5rho_0,M_0}
\end{align}
In the singular interaction  case, by \eqref{eq:hls} it holds that 
 \begin{align}
    \| M^i_0\|_{L^1}&\lesssim 1+\|(a*\nabla \rho_0^i)\rho_0^i\|_{L^1}+\|(a*
\rho^i_0)\nabla \rho^i_0\|_{L^1}+\|(b*\nabla
\rho^i_0)  \rho^i_0\|_{L^1}\notag\\
&\lesssim 1+\|a*\nabla \rho_0^i\|_{L^1}+\|a*
\rho^i_0\|_{L^1}+\|b*\nabla
\rho^i_0\|_{L^1} \lesssim 1+\|\rho^i_0\|_{C_x^1}\leq
(\frac{\epsilon_0}{2})^{d_0+d_0'}C_0.\label{bd:5rho_0,M_02}
 \end{align}

With these preparations, we are ready to state the iteration proposition, which forms the basis of the whole scheme.

\bp\label{prop:sta}
Under the assumption of Theorem \ref{thm:sta}, there exists  a choice of parameters $a,b,\beta$  such
that the following holds: Let $(v_q,  \rho^i_q,  M^i_q)_{1\leq i \leq N}$ be a solution to the system \eqref{eq:5qth} satisfying  $\int  \rho^i_q\dif x=1$,
\begin{align}
\|v_q\|_ {C_{x}^1}\leq C_0^{1/d_0}\lambda_q^{ d+1},\
\|  \rho^i_{q}\|_ {C_{x}^1}+\lambda_q^{-1}\|  \rho^i_{q}\|_ {C_{x}^2}\leq C_0^{1/d_0'} \lambda_q^{ d+1},
\label{bd:5rhoqc1}\\
 \|M^i_q\|_{L^1}\leq C_0 \delta_{q+1}.&\label{bd:5rql1}
\end{align}
Then there exists 
$(v_{q+1},  \rho^i_{q+1},  M^i_{q+1})_{1\leq i \leq N}$ which solves \eqref{eq:5qth} and satisfies \eqref{bd:5rhoqc1}-\eqref{bd:5rql1} at the level $q+1$ and
\begin{align}
\|v_{q+1}-v_{q}\|_{L^{d_0}}\leq
C_vC_0^{1/d_0}\delta_{q+1}^{1/d_0},\label{bd:5vq+1-vqldd}\ \
\|  \rho^i_{q+1}-  \rho^i_{q}\|_{L^{d_0'}}\leq 
C_v C_0^{1/{d_0'}}\delta_{q+1}^{1/{d_0'}},
\end{align}for some universal constant $C_v\geq1$. 
Moreover, we have for some $\epsilon>0$ small enough,
\begin{align}
     \|v_{q+1}-v_{q}\|_{L^{1}}&\leq \delta_{q+1}^{1/d_0},\label{bd:5vq+1-vql1}\\
 \|  \rho^i_{q+1}-  \rho^i_{q}\|_{W^{1,1+\epsilon}}\leq 
 4^{-i}\delta_{q+1}^{1/d_0'},\   \rho^i_{q+1} -   \rho^i_q&\geq - \delta_{q+1}^{1/d_0'}.\label{bd:5rhoq+1-rhoql1}
\end{align}
\ep

The proof of the main iteration, Proposition~\ref{prop:sta}, will be given in the next section. 
Assuming Proposition~\ref{prop:sta}, we now complete the proof of Theorem~\ref{thm:sta}.
 
 \begin{proof}[Proof of Theorem~\ref{thm:sta}]
We initialize the iteration from the triple
$
(v_0,\rho^i_0,M^i_0)
$
defined above.
By \eqref{bd:5rho_0,M_0} and \eqref{bd:5rho_0,M_02}, the required estimates hold at level $q=0$, since
$
\delta_1=(\frac{\epsilon_0}{2})^{d_0+d_0'}.
$ 
Next, applying Proposition~\ref{prop:sta} inductively, we construct
$
(v_q,\rho^i_q,M^i_q)
$
for every
$
q\geq1.
$
By \eqref{ieq:5ab2} together with
\eqref{bd:5vq+1-vqldd}--\eqref{bd:5rhoq+1-rhoql1}, the sequence
$
 (v_q,\rho^i_q) 
$
is Cauchy in
$
L^{d_0}\times (L^{d_0'}\cap W^{1,1+\epsilon})^N,
$
and we denote its limit by
$
(v,\rho^i).
$
Moreover, by \eqref{bd:5rql1}, it is straightforward to verify that
$
(\rho^i,v)
$
solves \eqref{eq:fpe:sta}.

Furthermore, by \eqref{ieq:5ab2},
\eqref{bd:5vq+1-vqldd}, and
\eqref{bd:5rhoq+1-rhoql1}, we obtain
\begin{align}
|\|\rho^i-1\|_{L^1}-\|\rho^i_0-1\|_{L^1}|
&\leq
\sum_{q=0}^\infty
\|\rho^i_{q+1}-\rho^i_q\|_{L^1}
\leq
 4^{-i}\sum_{q=0}^\infty
\delta_{q+1}^{1/d_0'}\leq 4^{-i-1}
,
\notag\\
\rho^i
&\geq
\rho^i_0
+
\sum_{q=0}^\infty
(\rho^i_{q+1}-\rho^i_q)
\geq
\frac34
-
\sum_{q=0}^\infty
\delta_{q+1}^{1/d_0'}
>0.
\notag
\end{align}
Hence, by the same calculation as before, $\rho^i$ is nonnegative, nontrivial, and $\rho^i$
 do not coincide with each other.
Since
$
\int_{\mathbb{T}^d}\rho_q^i\,\dif x=1,
$
it follows that $\rho^i$ is a probability density.

Moreover, by \eqref{bd:5vq+1-vql1}  we have $  \|v -\overline v\|_{L^{1}}\leq\sum_{q=0}  \|v_{q+1}-v_{q}\|_{L^{1}}\leq \sum_{q=0}\delta_{q+1}^{1/d_0}
 \leq \epsilon_0. $ 

By \eqref{bd:5rhoqc1}, \eqref{bd:5vq+1-vqldd}, and interpolation, we obtain
$
v\in L^{d_0(1+\epsilon)}
$
for some sufficiently small
$
\epsilon>0.
$
Since
$
\rho^i\in L^{d_0'},
$
it follows that
$
|v|^{1+\epsilon}\rho^i\in L^1.
$
Therefore, we apply the superposition principle
\cite[Section~7.2]{Tre14}
 and there exists a probability measure
$
\mathbf{Q}^i
$
on
$
C([0,\infty);\mathbb{T}^d)
$
which is a martingale solution associated with $L_{\rho^i}$ and satisfies
\begin{align*}
\dif \mathbf{Q}^i\circ\Pi_t^{-1}
=
\rho^i\,\dif x,
\ 
t\geq0.
\end{align*}
The resulting solution is stationary, since the density $\rho^i$ is independent of time.
\end{proof}

\section{Proof of Proposition \ref{prop:sta}}\label{sec:convexsta}
In this section, we apply the convex integration method to prove Proposition~\ref{prop:sta}. 
The overall strategy is similar to that in Section~\ref{sec:4defmq+1}. 
However, in the present stationary setting, we introduce new building blocks adapted to the time-independent framework, which enable us to implement the convex integration scheme and derive the desired estimates.

\subsection{Mollification}\label{sec:mpll5}
For given sufficiently small $\alpha\in(0,1)$ to be determined, we  take  $ {l}:=\lambda_{q+1}^{-3\alpha/{2}}\lambda_{q}^{-3d/2-2}$ and have 
 \begin{align}
     {l}^{-1}\leq\lambda_{q+1}^{2\alpha},\ \   l\lambda_q^{3d+4} \leq 
     \lambda_{q+1}^{-\alpha}\ll\delta_{q+2}.\label{5:para42}
 \end{align} 

Then,  in this section, we also replace $(v_{q},  \rho^i_{q},M^i_q)$ by a space-direction mollified field  
\begin{align}
v_l=v_{q}*\phi_l,\ \ 
   \rho^i_l=  \rho^i_{q}*\phi_l,\  M^i_l=M^i_{q}* \phi_l,\notag
\end{align}
where we recall that $\phi_l$ is a   standard radial mollifiers on $\mathbb{R}^{d}$.  
By calculation we obtain that 
\begin{align*}
-\div\div (A(\rho^i _l) \rho^i_{l})+\div(v_l   \rho^i_l)+\div((V*\rho_l^i)\rho_l^i)&=-\div (M^i_l+M^i_{com} ),\ \
\div v_l=0,
\end{align*}
where
\begin{align*}
M^i_{com} :&=-v_l   \rho^i_l+(v_{q}   \rho^i_{q})* \phi_l+\div( A(  \rho^i_{l}) \rho^i_{l})-( \div (A(  \rho^i_{q})  \rho^i_{q}))* \phi_l\\
&\ \ \ \   -(V*\rho_l^i)\rho_l^i+((V*\rho_q^i)\rho_q^i)*_x\phi_l.\notag
\end{align*}

By the mollification estimate \eqref{bd:5rql1}
 and \eqref{5:para42}  we have for $N\geq 0$,
\begin{align}
    \|M^i_l \|_{C_{x}^N}&\lesssim l^{-d-\frac13-N} \|M^i_{q } \|_{L^1}\lesssim C_0  l^{-d-\frac13-N}. \notag
 \end{align} 

 \subsection{Construction of the perturbations}\label{sec:5defq+1}
To construct the perturbations of $v_l$ and $\rho^i_l$, we introduce in this section a family of Mikado-type building blocks, which may be regarded as a generalization of those in Appendix~\ref{gij}.   We consider $2N$ disjoint sets $\Lambda^{1,n}, \Lambda^{2,n}$ defined in Appendix \ref{gij} by taking suitable rational rotations of one fixed set. We use the notation   $ \Lambda^{j,n} =  \Lambda^{1,n}$ for $j$ odd, and $ \Lambda^{j,n} =  \Lambda^{2,n}$ for $j$ even. 
For a fixed exponent
$
d_0>1,
$ and $\xi\in\Lambda:=\cup_{i=1}^N(\Lambda^{1,i}\cup \Lambda^{2,i})$
we   recall the notation
$
\Phi_{(\xi,d_0)},
\phi_{(\xi,d_0)},
\phi'_{(\xi,d_0')}
$
introduced in Appendix~\ref{gij}. 

\br
We recall that in \cite{Luo19}, the construction of stationary solutions requires the spatial dimension to be  larger than 3. 
In contrast, in the present work, we are able to employ more general $L^{d_0}$-based building blocks, which allows us to construct stationary solutions  also in dimension three.

\er

We  recall $\alpha_\xi$ from Lemma \ref{lem:4}, and  define 
 \begin{align*}
    W_{(\xi)}(x):=\xi\phi_{(\xi,d_0)}(x-\alpha_\xi),\ 
    \Theta_{ (\xi)}(x):=\phi'_{(\xi,d_0')}(x-\alpha_\xi),
\end{align*} 
 and 
\begin{align}
V_{(\xi)}:=\frac{1}{(n_*\lambda)^2}(\xi\otimes\nabla\Phi_{(\xi,d_0)}-\nabla\Phi_{(\xi,d_0)}\otimes\xi)(x-\alpha_\xi)\notag
\end{align}
with
\begin{align}
\div V_{(\xi)}
=W_{(\xi)} .\label{5:divOmega}
\end{align} 
In this case, by choosing $\lambda$ large enough, by Lemma \ref{lem:4} we have
\begin{align}
  W_{(\xi)}\Theta_{(\xi')}\equiv 0,\ {\rm for}\ \xi\neq\xi'\in\Lambda,\label{bd:wtheta0}\\
\int _{\mathbb{T}^d}W_{(\xi)}\Theta_{(\xi)}\dif x=\xi,\ \ \div (W_{(\xi)}\Theta_{(\xi)})=0.\notag
\end{align}

Finally, we obtain that for $N\geq 0$ and $p\in [1, \infty]$, it holds 
\begin{align}
\|\nabla^NW_{(\xi)}\|_{L^p }+\lambda\|\nabla^NV_{(\xi)}\|_{L^p }
\lesssim r_\perp^{\frac{d-1}{p}-\frac{d-1}{d_0}}\lambda^{n}, \ \ \ \|\nabla^N\Theta_{(\xi)}\|_{L^p }\lesssim r_\perp^{\frac{d-1}{p}-\frac{d-1}{d_0'}}\lambda^{n},& \label{5:int4theta}
\end{align}
where the implicit constants may depend on $p,N$, but are independent of $\lambda,r_\perp$.

With these building blocks in hand, then, we fix a  parameter $\zeta = 20\delta_{q+2}^{-1}$.  Let
$
\chi\in C_c^\infty(-\frac34,\frac34)
$
be a nonnegative function such that
$
\sum_{n\in\mathbb{Z}}\chi(t-n)=1
$
for every $t\in\mathbb{R}$.
Let
$
\tilde{\chi}\in C_c^\infty(-\frac45,\frac45)
$
be a nonnegative function satisfying
$
\tilde{\chi}=1
$
on
$
[-\frac34,\frac34]
$
and
$
\sum_{n\in\mathbb{Z}}\tilde{\chi}(t-n)\leq2.
$

Then we define the perturbations
\begin{align*}
w_{q+1}^{(p,i)}:&=\sum_{n\geq3}\sum_{\xi\in\Lambda^{n,i}}\tilde{\chi}(\zeta |M^i_l|-n)\(\frac{n}{\zeta }\)^{1/d_0}W_{ (\xi)},\\
w_{q+1}^{(c,i)}:&=\sum_{n\geq3}\sum_{\xi\in\Lambda^{n,i}}\nabla( \tilde{\chi}(\zeta |M^i_l|-n))\(\frac{n}{\zeta }\)^{1/{d_0}}:V_{ (\xi)}.
\end{align*}
Finally, the total perturbation and new iteration are defined by
\begin{align}
w_{q+1}:=\sum_{i=1}^N\(w_{q+1}^{(p,i)}+w_{q+1}^{(c,i)}\),\ \ v_{q+1}:=v_l+w_{q+1}.\notag
\end{align} 
Then  $w_{q+1},v_{q+1}$ are both mean-zero and divergence-free functions.

Now,  we define the perturbations for the stationary  densities functions as
\begin{align}
\theta_{q+1}^{(p,i)}:&=
\sum_{n\geq3}\sum_{\xi\in\Lambda^{n,i}}\chi(\zeta |M^i_l|-n)\(\frac{n}{\zeta }\)^{1/{d_0'}}\Gamma_{\xi}\(\frac{M^i_l}{|M^i_l|}\)\Theta_{ (\xi)},\notag\\
\theta_{q+1}^{(c,i)}:&=-\int_{\mathbb{T}^d} \theta_{q+1}^{(p,i)}\dif x.\notag
\end{align}

Then, by \eqref{bd:wtheta0} and  a similar calculation as before, we obtain 
\begin{align}
 \div (w_{q+1}^{(p,i)}\theta_{q+1}^{(p,i)}-M^i_l)
&=\sum_{n\geq3}\sum_{\xi\in\Lambda^{n,i}}\nabla\[\chi(\zeta |M^i_l|-n)\frac{n}{\zeta }\Gamma_{\xi}\(\frac{M^i_l}{|M^i_l|}\)\]\mP_{\neq0}(W_{ (\xi)}\Theta_{ (\xi)})\notag\\
 &\ \ +\div \(\sum_{n\geq3}\chi(\zeta |M^i_l|-n)\frac{n}{\zeta }\frac{M^i_l}{|M^i_l|}-M^i_l\). \label{wthetagood}
\end{align} 

Finally, we define
\begin{align}
\theta _{q+1}^i:=\theta_{q+1}^{(p,i)}+\theta_{q+1}^{(c,i)},
\ \ 
  \rho^i_{q+1}:=  \rho^i_l+\theta^i _{q+1}.\notag
\end{align}
  By  definition, it  holds that $\int  \rho^i_{q+1}\dif x=1$.

Moreover, by Lemma \ref{lem:4}, we obtain for
$
1\leq i\leq N 
$
that
\begin{align}
\div\bigl(
w_{q+1}\theta_{q+1}^i
\bigr)
=
\div\(
(w_{q+1}^{(p,i)}+w_{q+1}^{(c,i)})
\theta_{q+1}^{(p,i)}
\).
\label{bd:5wq+1rhoq+1i}
\end{align}

\subsection{Construction of the stress term $M^i_{q+1}$}\label{sec:5defmq+1}
From  \eqref{bd:5wq+1rhoq+1i}  and the definition of the perturbations 
we obtain

\begin{align*}
 -\div M^i_{q+1}&=\div (w_{q+1}^{(p,i)}\theta_{q+1}^{(p,i)}-M^i_l)(:=\div M^i_{osc})\\
 &\quad-\div[ \div( A(\rho^{i}_{q+1})\rho^{i}_{q+1}-A(\rho^{i}_{l}) \rho^{i}_{l})-(V*\rho_{q+1}^i)\rho_{q+1}^i+(V*\rho_{l}^i)\rho_{l}^i] (:=\div M^i_{nonlin})\notag\\ 
 &\quad+\div(v_l\theta _{q+1}^i+w_{q+1}  \rho^i_l+w_{q+1}^{(c,i)}\theta_{q+1}^{(p,i)})(:=\div M^i_{lin})\\
 &\quad -\div M^i_{com},
\end{align*}
where we  define  the  nonlinear error $ M^i_{nonlin}$ and linear error $M^i_{lin}$ respectively by 
\begin{align*}
M^i_{nonlin}:&=-\div(  A(\rho^i_{q+1} )   \rho^i_{q+1}- A(\rho^i_{l} ) \rho^i_{l}) +(V*\rho_{q+1}^i)\rho_{q+1}^i-(V*\rho_{l}^i)\rho_{l}^i,\\
M^i_{lin}:&=v_l\theta^i _{q+1}+w_{q+1}  \rho^i_l+w_{q+1}^{(c,i)}\theta_{q+1}^{(p,i)}.
\end{align*}  
Using \eqref{wthetagood} and the inverse divergence operator  $\mathcal{B}_1$, we  define  the  oscillation error $M^i_{osc}:=M^i_{osc,x}+M^i_{osc,c}$  by
\begin{align*} M^i_{osc,x}:&=\sum_{n\geq3}\sum_{\xi\in\Lambda^{n,i}}\mathcal{B}_1\(\nabla\[\chi(\zeta |M^i_l|-n)\frac{n}{\zeta }\Gamma_{\xi}\(\frac{M^i_l}{|M^i_l|}\)\],\mathbb{P}_{\neq0}(W_{ (\xi)}\Theta_{ (\xi)})\),\\
   M^i_{osc,c}:&=\sum_{n\geq3}\chi(\zeta |M^i_l|-n)\frac{n}{\zeta }\frac{M^i_l}{|M^i_l|}- M^i_l.
\end{align*}
 
Then we define 
$$-{M}^i _{q+1}:=M^i_{osc}+M^i_{nonlin}+M^i_{lin}-M^i_{com} .$$

\subsection{ Proof of Proposition \ref{prop:sta}}\label{sec:proof1}

\subsubsection{Choice of parameters}\label{sec:5choicepara}

Regarding the parameters of the building blocks, we define 
\begin{align}
  \lambda=\lambda_{q+1},\ r_\perp=\lambda_{q+1}^{-1+\frac1{M}}, \notag
\end{align}
where   we introduce a integer $M>0$ large enough  satisfying $0< \frac1M(\frac{d-1}{d_0}+1)<\frac{d-1}{d_0}-1, (1-\frac{1}{M})\frac{d-1}{d_0'}>\frac{1}{M}$ and $(1-\frac{1}{M})(d-1)(\frac{\gamma}{d}-\frac{1}{d_0'})>\frac{1}{M}$.   Then, it holds that 
\begin{align}
    \lambda_{q+1}r_\perp^{d-1-\frac{d-1}{d_0'}}\lesssim \lambda_{q+1}^{-\frac{1}{M}},\  \ r_\perp^{d-1-\frac{d-1}{d_0}}\lesssim\lambda_{q+1}^{-\frac{1}{M}},\ \   r_\perp^{\frac{d-1}{d/\gamma}-\frac{d-1}{d_0'}}\lesssim\lambda_{q+1}^{-\frac1M}.\label{para5lam}
\end{align}

In the sequel, we also need \eqref{5:para42} and \begin{align}
   \lambda_{q}^{ d+1}\leq\lambda_{q+1}^\alpha,\ \ (12d+41)\alpha<\frac1M. \label{para55}
 \end{align} 

 The above conditions can be achieved as follows. 
First, choose $\alpha>0$ sufficiently small such that $(12d+41)\alpha<\frac1M.$ Next, choose $b\in M\mathbb{N}$ sufficiently large so that
$b>\frac{3d+4}{\alpha}.$ We then select
$\beta>0$ sufficiently small such that
$\alpha>2\beta b.$ Finally, we choose
$a$ sufficiently large in order to absorb all implicit and universal constants arising in the subsequent estimates and to guarantee the validity of \eqref{ieq:5ab2}.

In the end, we establish the estimate of the amplitude functions  as in Lemma \ref{lem:4chi}.
\bl\label{lem:5chi} 
 For $ M,K\in\mathbb{N}_0$ and $1\leq i\leq N$, we have
\begin{align*}
    \sum_{n\geq3} \(\frac{n}{\zeta }\)^M\(\|\chi(\zeta |M^i_l|-n)\|_{C_{x}^K}+ \|\tilde{\chi}(\zeta |M^i_l|-n)\|_{C_{x}^K}\)&\lesssim l^{- (d+4)K-(d+2)(M+1)},\\
     \sum_{n\geq3}\sum_{\xi\in\Lambda^{n,i}}\(\frac{n}{\zeta }\)^M\norm{\chi(\zeta |M^i_l|-n)\Gamma_\xi\(\frac{M^i_l}{|M^i_l|}\)}_{C_{x}^K}& \lesssim l^{-(2d+8)K-(d+2)(M+1)}.
\end{align*}
\el 

\subsubsection{Proof of  \eqref{bd:5vq+1-vqldd}  for $v_{q+1}-v_q$.}
We first estimate the principal perturbations ${w}^{(p,i)}_{q+1}$. By Cauchy's inequality  we have
\begin{align}
|w_{q+1}^{(p,i)}|^{d_0}  
&\lesssim \sum_{n\geq3}\tilde{\chi}(\zeta |M^i_l|-n)\frac{n}{\zeta }\sum_{\xi\in\Lambda^{n,i}}\left|W_{ (\xi)}\right|^{d_0}.\notag
\end{align} 
By applying the generalized H\"older inequality of 
Theorem \ref{ihiot} in spatial direction, together with the estimates for the building blocks in \eqref{5:int4theta} and Lemma \ref{lem:5chi} we deduce 
\begin{align}
\|w_{q+1}^{(p,i)}\|^{d_0}_{L^{d_0}} 
&\lesssim\sum_{n\geq3}\norm{\tilde{\chi}(\zeta |M^i_l |-n)\frac{n}{\zeta }}_{L^1}\sum_{\xi\in\Lambda^{n,i}}\|W_{ (\xi)}\|_{L^{d_0}}^{d_0}\notag\\ 
&\quad+(r_\perp\lambda_{q+1})^{-1}\norm{\tilde{\chi}(\zeta |M^i_l |-n)\frac{n}{\zeta }}_ {C_{x}^1}\sum_{\xi\in\Lambda^{n,i}}\|W_{ (\xi)}\|_{L^{d_0}}^{d_0} \notag\\
&\lesssim\norm{ M^i_l +\zeta ^{-1}}_{L^1}+l^{-3d-8}\lambda_{q+1}^{-\frac{1}{M}} \lesssim \|M^i_l \|_{L^1}+ \delta_{q+1}\lesssim C_0 \delta_{q+1}
,\notag
\end{align}
where we used  the condition on the parameters to have $(6d+16)\alpha-\frac{1}{M}<-\alpha<-2\beta$. 

In the next step, we estimate in  general $L^m$-norm with $m\in[1,\infty]$. By  \eqref{5:int4theta}   and   Lemma \ref{lem:5chi} we obtain
\begin{align}
\|w_{q+1}^{(p,i)}\|_{L^m}&\lesssim\sum_{n\geq3}\sum_{\xi\in\Lambda^{n,i}}\norm{\tilde{\chi}(\zeta |M^i_l|-n)\(\frac{n}{\zeta }\)^{1/d_0}}_ {C_{x}^0}\|W_{ (\xi)}\|_{L^m}\lesssim l^{-2d-4}r_\perp^{\frac{d-1}{m}-\frac{d-1}{d_0}} ,\label{bd:5wq+1plp}\\
\|w_{q+1}^{(c,i)}\|_{L^m}&\lesssim\sum_{n\geq3}\sum_{\xi\in\Lambda^{n,i}}\norm{\tilde{\chi}(\zeta |M^i_l|-n)\(\frac{n}{\zeta }\)^{1/d_0}}_{C^1_{x}}\|V_{ (\xi)}\|_{L^m}\lesssim l^{-3d-8}r_\perp^{\frac{d-1}{m}-\frac{d-1}{d_0}}\lambda_{q+1}^{-1}.\label{bd:5wq+1clp}
\end{align}
 We then have
\begin{align}
\|w_{q+1}\|_{L^{d_0}}
&\lesssim NC_0^{1/d_0} \delta_{q+1}^{1/d_0}+N\lambda_{q+1}^{(6d+16)\alpha-1} \leq \frac{C_v}2C_0^{1/d_0}\delta_{q+1}^{1/d_0},\label{bd:5wq+1l2}\end{align}
where we used  conditions on the parameters to have
$(6d+16)\alpha-1<-\alpha<-2\beta$. 
The above inequality together with  \eqref{bd:5rhoqc1} and \eqref{5:para42} implies \eqref{bd:5vq+1-vqldd} for $v_{q+1}$:
\begin{align*}
\|v_{q+1}-v_{q}\|_{L^{d_0}}&\leq \|w_{q+1}\|_{L^{d_0}}+l\|v_{q}\|_ {C_{x}^1}
\leq \frac{ {C}_v}2C_0^{1/d_0}\delta_{q+1}^{1/d_0}+lC_0^{1/d_0}\lambda_{q}^{ d+1}\leq C_vC_0^{1/d_0}\delta_{q+1}^{1/d_0}.
\end{align*}

\subsubsection{Proof of  \eqref{bd:5rhoqc1} for $v_{q+1}$.}
 By the estimates for the building blocks in \eqref{5:int4theta} and   Lemma \ref{lem:5chi} we have 
\begin{align}
\|w_{q+1}\|_{C_{x}^1}&\lesssim \sum_{i=1}^N\sum_{n\geq3}\sum_{\xi\in\Lambda^{n,i}}\norm{\tilde{\chi}(\zeta |M^i_l|-n)\(\frac{n}{\zeta }\)^{1/d_0}}_{C_{x}^{2}}\|  V_{ (\xi)}\|_{C_{x}^2}\notag\\
&\lesssim  Nl^{-4d-12}\lambda_{q+1} r_\perp^{-\frac{d-1}{d_0}}\lesssim \lambda_{q+1}^{(8d+24)\alpha+d }.\notag
\end{align}
 Thus, by $(8d+24)\alpha<1$ we obtain
 \begin{align*}
\|v_{q+1}\|_ {C_{x}^1}\leq\|v_{l}\|_ {C_{x}^1}+\|w_{q+1}\|_ {C_{x}^1}\leq C_0^{1/d_0}\lambda_{q}^{d+1}+ \lambda_{q+1}^{d+1}\leq C_0^{1/d_0}\lambda_{q+1}^{ d+1},
 \end{align*} where we chose $a$ large enough to absorb the universal constant. 

\subsubsection{Proof of \eqref{bd:5vq+1-vql1}}By \eqref{para5lam}, \eqref{bd:5wq+1plp} and \eqref{bd:5wq+1clp}, it holds that
\begin{align*}
\|w_{q+1}\|_{L^1}&\lesssim  Nl^{-2d-4}r_\perp^{{d-1}-\frac{d-1}{d_0}} \lesssim C_0\lambda_{q+1}^{(4d+8)\alpha-\frac{1}{M}},
\end{align*} 
which together with   \eqref{ieq:5ab2}, \eqref{bd:5rhoqc1} implies 
\begin{align*}
     \|v_{q+1}-v_{q}\|_{L^{1}}\lesssim
 C_0\lambda_{q+1}^{(4d+8)\alpha-\frac{1}{M}}+lC_0\lambda_q^{d+1}\lesssim \lambda_{q+1}^{-\alpha}\leq  \delta_{q+1}^{1/d_0},
\end{align*}
where we used the   conditions on the parameters to have $(4d+9)\alpha<\frac{1}{M} $. We also chose $a$ large to absorb the universal constant.

\subsubsection{Proof of  \eqref{bd:5vq+1-vqldd} for $  \rho^i_{q+1}-\rho^i_q$}\label{sec:5esttheq+1}
We first estimate the principal perturbations  $\theta^{(p,i)}_{q+1}$. By the same argument as in \eqref{bd:4wq+1pl2}, 
\begin{align}
\|\theta_{q+1}^{(p,i)}\|^{d_0'}_{L^{d_0'}} 
&\lesssim \|M^i_l \|_{L^1}+ \delta_{q+1}\lesssim C_0 \delta_{q+1}
.\notag
\end{align}

Then we estimate in  general $L^m$-norm with $m\in[1,\infty]$, by the estimates for the building blocks in \eqref{5:int4theta}  and Lemma \ref{lem:5chi} we obtain
\begin{align}
\|\theta_{q+1}^{(p,i)} \|_{L^{m}}&\lesssim\sum_{n\geq3}\sum_{\xi\in\Lambda^{n,i}}\norm{\chi(\zeta |M^i_l|-n)\(\frac{n}{\zeta }\)^{1/d_0'}\Gamma_{\xi}\(\frac{M^i_l}{|M^i_l|}\)}_{C_{x}^0}\|\Theta_{ (\xi)} \|_{L^{m}}\lesssim l^{-2d-4}r_\perp^{\frac{d-1}{m}-\frac{d-1}{d_0'}} ,\label{bd:5theq+1plp}
\end{align}
which implies that  by \eqref{para5lam}
\begin{align}
|\theta_{q+1}^{(c,i)} |&\lesssim\|\theta_{q+1}^{(p,i)}\|_{L^{d/\gamma}}\lesssim l^{-2d-4}r_\perp^{\frac{d-1}{d/\gamma}-\frac{d-1}{d_0'}}\lesssim\lambda_{q+1}^{(4d+8)\alpha-\frac1M}\lesssim \lambda_{q+1}^{-2\alpha}, \label{bd:5theq+1clp} 
\end{align}
where we used  conditions on the parameters to have $(4d+10)\alpha<\frac1{M}$ and chose $a$ large enough to absorb the universal constant.
Then together with \eqref{5:para42}    we imply 
\begin{align}
\|  \rho^i_{q+1}-  \rho^i_q\|_{  L^{d_0'}}
&\leq\|\theta^i _{q+1}\|_{  L^{d_0'}}+l\|  \rho^i_{q}\|_{ C_{x}^1} \lesssim C_0^{1/{d_0'}}(\delta_{q+1}^{1/{d_0'}}+\lambda_{q+1}^{-\alpha})\leq C_v C_0^{1/{d_0'}}\delta_{q+1}^{1/{d_0'}}.\notag
\end{align}

\subsubsection{Proof of   \eqref{bd:5rhoq+1-rhoql1}} 
 We first estimate $\theta_{q+1}^{(p,i)}$ in $W^{1,1+\epsilon}$-norm for some $\epsilon>0$. By  \eqref{5:int4theta}  and  Lemma \ref{lem:5chi} we have
\begin{align}
\|\theta_{q+1}^{(p,i)}\|_{W^{1,1+\epsilon}}&\lesssim\sum_{n\geq3}\sum_{\xi\in\Lambda^{n,i}}\norm{\chi(\zeta |M^i_l|-n)\(\frac{n}{\zeta }\)^{1/d_0'}\Gamma_{\xi}\(\frac{M^i_l}{|M^i_l|}\)}_ {C_{x}^1}\|\Theta_{ (\xi)} \|_{W^{1,1+\epsilon}}\notag\\ 
&\lesssim l^{-4d-12}\lambda_{q+1}r_\perp^{\frac{d-1}{1+\epsilon}-\frac{d-1}{d_0'}}
\lesssim\lambda_{q+1}^{(8d+24)\alpha-\frac1M+d\epsilon}\lesssim  \lambda_{q+1}^{-2\alpha},\label{bd:5thetaq+1w11}
\end{align}
where we chose $\epsilon>0$ small enough to ensure $d\epsilon<\alpha$, and used \eqref{para5lam} and  \eqref{para55}. We also used  conditions on the parameters to have $(8d+27)\alpha<\frac1M$. 

Since $\theta_{q+1}^{(p,i)}$ is non-negative,  by the choice of parameters in \eqref{5:para42}  and \eqref{bd:5theq+1clp}  we have
\begin{align}\|\rho^{i}_{q+1}-\rho^{i}_q\|_{W^{1,1+\epsilon}} 
&\leq\|\theta^{i}_{q+1}\|_{W^{1,1+\epsilon}} +l\|\rho_q^i\|_{C_{x}^2} \lesssim  \lambda_{q+1}^{-\alpha}+l\lambda_q^{d+1}\leq 4^{-i}\delta_{q+1}^{1/d_0'},\notag\\
     \rho^i_{q+1} -   \rho^i_q&\geq-|\theta_{q+1}^{(c,i)}|-l\|  \rho^i_{q}\|_{ C_{x}^1} \geq -C\lambda_{q+1}^{-\alpha}\geq -\delta_{q+1}^{1/d_0'},\notag
\end{align}
which yields \eqref{bd:5rhoq+1-rhoql1}.  Here we chose $a$ large enough to absorb the universal constant.

\subsubsection{Proof of  \eqref{bd:5rhoqc1} for $  \rho^i_{q+1}$}
  By \eqref{5:int4theta}  and Lemma \ref{lem:5chi} we have for $j=1,2$
\begin{align*}
\|\theta_{q+1}^{(p,i)} \|_ {C_{x}^j} 
&\lesssim\sum_{n\geq3}\sum_{\xi\in\Lambda^{n,i}}\norm{\chi(\zeta |M^i_l|-n)\(\frac{n}{\zeta }\)^{1/d_0'}\Gamma_{\xi}\(\frac{M^i_l}{|M^i_l|}\)}_ {C_{x}^j}\|\Theta_{ (\xi)} \|_ {C_{x}^j}\notag\\ 
&\lesssim l^{-6d-20}\lambda_{q+1}^j r_\perp^{-\frac{d-1}{d_0'}}\lesssim
\lambda_{q+1}^{(12d+40)\alpha+d+j-1}.\notag
\end{align*}
By choosing $(12d+40)\alpha<\frac{1}{2}$  we deduce 
\begin{align}
    \|  \rho^i_{q+1}\|_ {C_{x}^1}+\lambda_{q+1}^{-1}\|  \rho^i_{q+1}\|_ {C_{x}^2}
    \leq C_0^{1/d_0'}\lambda_{q}^{ d+1}+  \lambda_{q+1}^{ d+1}\leq C_0^{1/d_0'} \lambda_{q+1}^{ d+1},\notag
\end{align}
which implies  \eqref{bd:5rhoqc1}. Here we chose $a$ large enough to absorb the universal constant.

\subsubsection{Proof of \eqref{bd:5rql1}}\label{sec:5estmq+1} We estimate each terms in the definition of $M^i_{q+1}$ separately.

\textbf{Oscillation error $M^i_{osc}$.} 
We observe that $W_{ (\xi)}\Theta_{ (\xi)}$ is $(\mathbb{T}/r_\perp\lambda_{q+1})^d$-periodic, so by  Theorem \ref{bb1_1},  Lemma \ref{lem:5chi} and \eqref{5:int4theta}   we have
\begin{align}
\|M^i_{osc,x} \|_{L^1}
&\lesssim\sum_{n\geq3}\sum_{\xi\in\Lambda^{n,i}}\norm{\chi(\zeta |M^i_l|-n)\frac{n}{\zeta }\Gamma_{\xi}\(\frac{M^i_l}{|M^i_l|}\)}_{C_{x}^2}(r_\perp\lambda_{q+1})^{-1}\|W_{ (\xi)}\Theta_{ (\xi)}\|_{L^{1}}\notag\\
&\lesssim l^{-6d-20}(r_\perp\lambda_{q+1})^{-1}\lesssim \lambda_{q+1}^{(12d+40)\alpha-\frac{1}{M}}
\lesssim  \lambda_{q+1}^{-\alpha}\leq  \frac15C_0\delta_{q+2}, \notag
\end{align}
where we used the  conditions on the parameters to have $(12d+41)\alpha<\frac{1}{M}$.

The estimate on  the stress term $M^i_{osc,c}$ is directly from Section \ref{sec:5estmq+1}:
\begin{align}
     \left| M^i_{osc,c} \right|
     &\leq \frac{3}{\zeta }+\sum_{n\geq3}\chi(\zeta |M^i_l |-n)\left|\frac{n}{\zeta }- |M^i_l |\right|\leq  \frac15C_0\delta_{q+2}.\notag
\end{align}

\textbf{Nonlinear error $M^i_{nonlin} $.} Under Assumption \ref{def:ass}, by the calculation in \eqref{arhorho-arhorho},   the bounds in   \eqref{bd:5rhoqc1}, \eqref{5:para42} and \eqref{bd:5thetaq+1w11}  we have
\begin{align}
    \|M^i_{nonlin}\|_{L^1} 
      &\lesssim \|\theta^{i}_{q+1}\|_{W^{1,1+\epsilon}} (1+\|\rho^{i}_{q}\|_{C_{x}^1}+\| \theta_{q+1}^i\|_{W^{1,1+\epsilon}} ) \notag\\
      &\lesssim (\lambda_{q}^{ d+1}+1)\lambda_{q+1}^{-2\alpha} \lesssim    C_0 \lambda_{q+1}^{-\alpha}\leq \frac{1}{5}C_0 \delta_{q+2}.\notag
\end{align} 

For the singular interaction case, by \eqref{bd:6arhorho-},  we have
\begin{align*}
    \|M^i_{nonlin}\|_{L^1} 
     &\lesssim (\| \theta_{q+1}^i \|_{L^{d/\gamma}}+\|  \nabla \theta_{q+1}^i\|_{L^{1+\epsilon}})(\| \theta_{q+1}^i \|_{L^{d/\gamma}}+\|  \nabla \theta_{q+1}^i\|_{L^{1+\epsilon}}+\| \rho_q^i\|_{C_{x}^1}) \\
     &\lesssim C_0\lambda_{q+1}^{-2\alpha}(1+\lambda_q^{d+1})\lesssim    C_0 \lambda_{q+1}^{-\alpha}\leq \frac{1}{5}C_0 \delta_{q+2}.\notag
\end{align*}

\textbf{Linear error $M^i_{lin}$.} 
Together with  the estimates  
in \eqref{bd:5rhoqc1}, \eqref{5:para42},  \eqref{bd:5wq+1plp},  \eqref{bd:5wq+1clp} and   \eqref{bd:5theq+1clp}     we obtain
\begin{align}
 \|M^i_{lin}\|_{L^1}&\lesssim \|v_l\|_ {C_{x}^0}\|\theta^i_{q+1} \|_{L^1}+\|  \rho^i_l\|_ {C_{x}^0}
\|w_{q+1}\|_{L^1}+   \| \theta_{q+1}^{(p,i)}\|_{L^{d_0'}}\|w_{q+1}^{(c,i)}\|_{L^{d_0}}\notag\\
&\lesssim C_vC_0(\lambda_{q}^{ d+1}+1)\lambda_{q+1}^{(4d+8)\alpha-\frac1{M}}
\lesssim   C_0 \lambda_{q+1}^{-\alpha}\leq  \frac{1}{5}C_0 \delta_{q+2},\notag
\end{align}
where we used the   conditions on the parameters to have $(4d+10)\alpha<\frac{1}{M}$. 

\textbf{Commutator error $M^i_{com} $.} By the calculation as in \eqref{bd:4mcom} or \eqref{bd:6mcom}, the bounds in   \eqref{bd:5rhoqc1}, \eqref{5:para42} and Lemma \ref{lem:A}  we obtain
 \begin{align}
  \|M^i_{ com} \|_{L^1}  & \lesssim l\|v_q\|_{C_{x}^1}\|\rho_q^i\|_{C_{x}^1}+l (1+  \| \rho_q^i\|_{C_{x}^2}+\| \rho_q^i\|_{C_{x}^1}^2)\| \rho_q^i\|_{C_{x}^1}\notag\\
 &\lesssim C_0 l\lambda_q^{3d+4}\lesssim  C_0  \lambda_{q+1}^{-\alpha}\leq \frac{1}{5}C_0 \delta_{q+2},\notag 
   \end{align}  
 where we also chose $a$ large to absorb the universal constant.
 
Summarizing all the estimates above  we obtain   \eqref{bd:5rql1} at the level $q+1$.

\section{Application to the whole space case}\label{R:cogpss4}
In this section, we consider the whole-space setting and prove Theorem~\ref{R:thm:sta} and Theorem \ref{R:thm:4converge}. The non-uniqueness solutions to linear Fokker--Planck equations on $\mathbb{R}^d$ was established in \cite{LR25}. A key ingredient in the whole-space construction is the Bogovskii operator, which replaces the inverse divergence operator used in the periodic setting.
As a  inverse of the divergence operator, the Bogovskii operator preserves compact support: when applied to a compactly supported function, the resulting vector field remains compactly supported. This property allows us to localize the entire convex integration construction to a fixed domain, say $[-\frac12,\frac12]^d.$ 

The overall construction and the corresponding estimates are largely analogous to those developed in the torus setting. For this reason, we omit many repetitive computations and focus only on the modifications required in the whole-space case.
Moreover, the solutions constructed here may be viewed as small perturbations of a stationary solution $\rho^{ st}$, which  plays a role analogous to that of the uniform distribution in the periodic setting.

 \subsection{Bogovskii operator}\label{R:tamr}
 Let $K\subset \mathbb{R}^d,d\geq2$ be a bounded and star shaped domain with respect to a ball $K'\subset\subset K$.
We choose a non-negative $\omega \in C_c^\infty(K')$ with $\int_{K'} \omega\dif x = 1$ and define for
$g\in C_c^\infty(K)$
\begin{equation}
    \mathcal{B}_\omega g(x) := \int_{K} g(y)\,\frac{x - y}{\lvert x - y \rvert^{n}}
    \int_{0}^{\infty} \omega\!\left(x + r\,\frac{x - y}{\lvert x - y \rvert}\right)
      (|x-y|+r)^{n-1}\,\dif r\,\dif y.\notag
\end{equation}
By rewriting 
\begin{equation}
    \mathcal{B}_\omega g(x) :=\int_{K} g(y)(x - y) 
    \int_{1}^{\infty} \omega\!\left(y + r(x - y) \right)
      r^{n-1}\,\dif r\,\dif y,\notag
\end{equation}
  the following Lemma is well-known (see c.f. \cite[Theorem 1]{Bog79}).
\bp  Let $k\geq0,1 <p< \infty$. Let $f \in W_0^{k,p}(K)$. Then it holds that $ \mathcal{B}_\omega( C_c^\infty(K)) \subset C_c^\infty(K)^d,$ and 
\begin{align*}
    \div \mathcal{B}_\omega f=f-\omega\int_{K}f\dif x.
\end{align*}
Moreover, we have
\begin{align*}
    \|\mathcal{B}_\omega f\|_{W^{k+1,p}(K)}\lesssim_{\omega} \|f\|_{W^{k,p}(K)}.
\end{align*}

\ep
Then, we introduce the the bilinear version $\mathcal{R}_\omega: C^\infty_c(K; \mathbb{R}) \times C_0^\infty(\mathbb{T}^d; \mathbb{R})\to C_c^\infty(K; \mathbb{R}^{d})$  by $$\mathcal{R}_\omega(v,f):=v\nabla\Delta^{-1}f-\mathcal{B}_\omega\(\nabla v\cdot\nabla\Delta^{-1}f\).$$
Here $C^\infty_0(\mathbb{T}^d;\mathbb{R})$ denotes the space of smooth $\mathbb{T}^d$-periodic functions on $\mathbb{R}^d$ with zero-mean.

\bp \label{R:bb1_1}
Let $k\geq 0,1 < p <\infty$. For any $v \in C^\infty_c(K; \mathbb{R})$ and  $f\in C_0^\infty(\mathbb{T}^d; \mathbb{R})$, we have $$\div(\mathcal{R}_\omega(v,f))=vf-\omega\int_{K}vf\dif x,$$ and for  $\sigma\in\mathbb{N}$,
$$\|\mathcal{R}_\omega(v,f(\sigma\cdot))\|_{W^{k,p}(K)}\lesssim\sigma^{k-1}\|v\|_{C^{k+1}(K)}\|f\|_{W^{k,p}(\mT^d)}.$$
\ep

\subsection{Some estimates on $\rho^{st}$}
We collect below several estimates that will be used throughout the remainder of this section.

\bl
The stationary solution satisfies that 
\begin{align}
    \|\rho^{st}\|_{W^{2,\infty}}<\infty.\label{R:rhostc2}
\end{align}
\el
\begin{proof}
    By rewriting $ g(\rho^{st}(x))=-\Phi(x)+\mu^{st} $, taking derivative and using the monotony of $g^{-1}$, it holds that
    \begin{align*}
        |\rho^{st}(x)|\leq g^{-1}(\mu^{st})<\infty.\\
         |\nabla\rho^{st}(x)|\leq |g'(\rho^{st}(x))|^{-1}|\nabla\Phi|\lesssim|\rho^{st}(x)|\Phi^m\lesssim e^{-c \Phi} \Phi^m<\infty,
    \end{align*}
    where we also used that $\rho^{st}=g^{-1}(-\Phi+\mu^{st} )\lesssim e^{c(-\Phi+\mu^{st} )}$.
    
    By calculation, we know that 
    \begin{align*}
        |g''(x)|\leq \frac{|\beta''(x)xb(x)|+|\beta'(x)(b(x)+xb'(x))|}{(xb(x))^2}\lesssim \frac{1}{x}+\frac{1}{x^2},
    \end{align*}
    which implies that 
    \begin{align*}
         |g''(\rho^{st}(x))(\nabla\rho^{st}(x))^2|\lesssim |g''(\rho^{st}(x))||\rho^{st}(x)|^2\Phi^{2m}\lesssim (1+|\rho^{st}(x)|)\Phi^{2m}\lesssim \Phi^{2m}.
    \end{align*}
    Then 
    \begin{align*}
        |\nabla^2\rho^{st}|\leq |g'(\rho^{st}(x))|^{-1}(|g''(\rho^{st}(x))(\nabla\rho^{st}(x))^2|+|\nabla^2\Phi|)\lesssim \rho^{st}\Phi^{2m} \lesssim e^{-c \Phi} \Phi^{2m}<\infty.
    \end{align*}
\end{proof}
Moreover, by definition, $\rho^{st}$ is a strictly positive function. Since $[-\frac12,\frac12]^d$ is compact and $\rho^{st}$ is continuous, there exists a constant $c_{\mathrm{in}}>0$ such that 
\begin{align}
    \rho^{st}\geq 1_{[-\frac12,\frac12]^d}c_{in}.\label{R:rhost>}
\end{align}

\subsection{The main iteration used in the proof of Theorem~\ref{R:thm:sta}}

Let $N>0,0<\epsilon_0\leq \min\{ \frac18, \frac1 {2^{d+10}} c_{in}\}$ and $1<d_0<d-1$ be given.   The iteration is again indexed by a parameter
$
q\in\mathbb{N}_0.
$
We consider the sequences
 \begin{align*}
\lambda_q
=
a^{b^q},
\ 
q\geq0,
\ 
\delta_q
=
(\frac{\epsilon_0}{2})^{d_0+d_0'}
\lambda_1^{2\beta}
\lambda_q^{-2\beta},
\ 
q\geq1,
\ 
\delta_0=1.
\end{align*}  By \eqref{ieq:5ab2}
we obtain $
\sum_{q\geq1}\delta_q^{1/(d_0+d_0')}
<
\epsilon_0.
$

At each step $q$, we construct  
$
(v_q,\rho^i_q,M^i_q)_{1\leq i\leq N}
$
solving the stationary system on the whole space:
\begin{align}
-\div(\beta'(\rho_q^i+\rho^{st})\nabla(\rho_q^i+\rho^{st}))
+
\div(v_q(\rho_q^i+\rho^{st}))+\div(B(\rho_q^i+\rho^{st}))
&=
-\div M^i_q,
\notag\\
\div v_q&=0,
\label{R:eq:5qth}
\end{align}
where we rewrite the notation
$
B(\rho)=Eb(\rho)\rho.
$
In particular, we also need the family $
(v_q,\rho^i_q,M^i_q)_{1\leq i\leq N}
$ has compact support on $\Omega_q \subset [-\frac12,\frac12]^d\subset \mathbb{R}^d$, where 
\begin{align*}
     \Omega_q := \[-\frac13-\sum_{ 1\leq r\leq q} {\delta}_r^{1/2},\frac13+\sum_{ 1\leq r\leq q} {\delta}_r^{1/2}\]^d.
\end{align*}
In fact, we will apply the convex integration on the $[-\frac12,\frac12]^d$ with zero boundary condition.

We initialize the iteration by defining
$
(v_0,\rho^i_0,M^i_0)_{1\leq i\leq N}
$
through
\begin{align*}
\rho^i_0 
&=
 4^{-i}F,
\ \
v_0 =0,
\\ 
M^i_0 
&=\beta'(\rho_0^i+\rho^{st})\nabla(\rho_0^i+\rho^{st})-\beta'(\rho^{st})\nabla(\rho^{st})
-B(\rho_0^i+\rho^{st})+B( \rho^{st}),
\end{align*}
where
$F(x)$ is a smooth mean-zero bounded   function with support in $\Omega_0$, and satisfying $\|F\|_{C_x^0}=\frac1 2 c_{in}, \|F\|_{L^1}\geq \frac1 {2^{d+1}} c_{in}$ by multiplying by a suitable constant. In particular, since $\rho^{st}$ is a stationary solution, we know that $(v_0,\rho^i_0,M^i_0)$ is a solution to \eqref{R:eq:5qth}, and $\supp\rho_0^i,\supp v_0,\supp M_0^i\subset\Omega_0$. 
Also, there exists a sufficiently large constant $C_0>0$ such that
\begin{align}
\|\rho^i_0\|_{C_x^2(\mathbb{R}^d)}
&\leq
C_0^{1/d_0'},\ \|v_0\|_{C_{x}^1(\mathbb{R}^d)}\leq C_0^{1/d_0},\notag\\ 
\|M^i_0\|_{L^1(\mathbb{R}^d)}
&\lesssim\|\beta' \|_{C^1 }+\|E \|_{C_{x}^1([-\frac12,\frac12]^d)}\|b\|_{C^1}\leq 
(\frac{\epsilon_0}{2})^{d_0+d_0'}C_0.
\label{R:bd:5rho_0,M_0}
\end{align}
 
      Our main iteration procedure reads as follows:
\bp\label{R:prop:sta}
Under the assumption of Theorem \ref{R:thm:sta}, there exists  a choice of parameters $a,b,\beta$  such
that the following holds: Let $(v_q,  \rho^i_q,  M^i_q)_{1\leq i \leq N}$ be a solution to the system \eqref{R:eq:5qth} satisfying \begin{align}
\|v_q\|_ {C_{x}^1(\mathbb{R}^d)}\leq C_0^{1/d_0}\lambda_q^{ d+1},\
\|  \rho^i_{q}\|_ {C_{x}^1(\mathbb{R}^d)}+\lambda_q^{-1}\|  \rho^i_{q}\|_ {C_{x}^2(\mathbb{R}^d)}\leq C_0^{1/d_0'} \lambda_q^{ d+1},
\label{R:bd:5rhoqc1}\\
 \|M^i_q\|_{L^1(\mathbb{R}^d)}\leq C_0 \delta_{q+1},&\label{R:bd:5rql1}\\
 \supp\rho_q^i,\supp v_q,\supp M_q^i\subset\Omega_q.&\label{R:bd:supp}
\end{align}
Then there exists 
$(v_{q+1},  \rho^i_{q+1},  M^i_{q+1})_{1\leq i \leq N}$ which solves \eqref{R:eq:5qth} and satisfies \eqref{R:bd:5rhoqc1}-\eqref{R:bd:supp} at the level $q+1$ and
\begin{align}
\|v_{q+1}-v_{q}\|_{L^{d_0}(\mathbb{R}^d)}\leq
C_vC_0^{1/d_0}\delta_{q+1}^{1/d_0},\label{R:bd:5vq+1-vqldd}\ \
\|  \rho^i_{q+1}-  \rho^i_{q}\|_{L^{d_0'}(\mathbb{R}^d)}\leq 
C_v C_0^{1/{d_0'}}\delta_{q+1}^{1/{d_0'}},&\\
 \|v_{q+1}-v_{q}\|_{L^{1}(\mathbb{R}^d)}+ 4^{i}\|  \rho^i_{q+1}-  \rho^i_{q}\|_{W^{1,1+\epsilon}(\mathbb{R}^d)}\leq 
 \delta_{q+1}^{1/d_0'},\  \ \rho^i_{q+1} -   \rho^i_q\geq - \delta_{q+1}^{1/d_0'}.&\label{R:bd:5rhoq+1-rhoql1}
\end{align}for some universal constant $C_v\geq1$.  
\ep

By \eqref{R:bd:supp}, we know that all the bounds on $\mathbb{R}^d$ are in fact restricted on the domain $[-\frac12,\frac12]^d$, so the estimates are similar to Section \ref{sec:convexsta}. 
The proof of the main iteration  will be given below. 
Assuming Proposition~\ref{R:prop:sta}, we now complete the proof of Theorem~\ref{R:thm:sta}.
 
 \begin{proof}[Proof of Theorem~\ref{R:thm:sta}]
We initialize the iteration from the triple
$
(v_0,\rho^i_0,M^i_0)
$
defined above.
By \eqref{R:bd:5rho_0,M_0}, the required estimates hold at level $q=0$.
Next, applying Proposition~\ref{R:prop:sta} inductively, we construct
$
(v_q,\rho^i_q,M^i_q)
$
for every
$
q\geq1.
$
By  
\eqref{R:bd:5vq+1-vqldd}--\eqref{R:bd:5rhoq+1-rhoql1}, the sequence
$
 (v_q,\rho^i_q) 
$
is Cauchy in
$
L^{d_0}\times (L^{d_0'}\cap W^{1,1+\epsilon})^N,
$
and we denote its limit by
$
(v,\rho^i).
$ We then define $\overline \rho^i:=\rho^i+\rho^{st}$.
By \eqref{R:bd:5rql1}, it is straightforward to verify that
$
(\overline \rho^i,v)
$
solves \eqref{R:eq:fpe}.

Furthermore, by \eqref{R:rhost>},  
\eqref{R:bd:5vq+1-vqldd}, and
\eqref{R:bd:5rhoq+1-rhoql1}, we obtain
\begin{align}
 \|v \|_{L^{1}(\mathbb{R}^d)}&\leq\sum_{q=0}  \|v_{q+1}-v_{q}\|_{L^{1}(\mathbb{R}^d)}\leq \sum_{q=0}\delta_{q+1}^{1/d_0}
 \leq \epsilon_0.\notag \\
\| \rho^i-\rho_0^i\|_{L^1(\mathbb{R}^d)} 
&\leq
\sum_{q=0}^\infty
\|\rho^i_{q+1}-\rho^i_q\|_{L^1(\mathbb{R}^d)}
\leq
 4^{-i}\sum_{q=0}^\infty
\delta_{q+1}^{1/d_0'}\leq 4^{-i} \frac{c_{in}}{2^{d+10}}
,
\notag\\
\overline\rho^i
&\geq \rho^{st}+
\rho^i_0
+
\sum_{q=0}^\infty
(\rho^i_{q+1}-\rho^i_q)1_{[-\frac12,\frac12]^d}
\geq
1_{[-\frac12,\frac12]^d}(c_{in}-\frac1 2 c_{in}
-
\sum_{q=0}^\infty
\delta_{q+1}^{1/d_0'})
>0.
\notag
\end{align}
Hence, by noticing that $\|\rho^i_0-\rho_0^j\|_{L^1}\geq |4^{-i}-4^{-j}|\cdot \frac{c_{in}}{2^{d+1}}$, it holds that $\overline \rho^i$ is nonnegative,  and $\rho^i$
 do not coincide with each other.
Since
$
\int_{\mathbb{R}^d}\rho_q^i\,\dif x=0,
$
it follows that $\overline\rho^i$ is a probability density.

Since
$
v\in L^{d_0},\ 
\overline \rho^i\in L^{d_0'},
$ we apply the superposition principle  \cite{Tre16}
 and there exists a probability measure
$
\mathbf{Q}^i
$
on
$
C([0,\infty);\mathbb{R}^d)
$
which is a stationary solution and satisfies
$
\dif \mathbf{Q}^i\circ\Pi_t^{-1}
=
\overline \rho^i\,\dif x,
\ 
t\geq0.$
Then,  a standard result  implies that there exists a $d$-dimensional Brownian motion $W_t,t\geq0$, on a stochastic basis  and   continuous  measurable maps $X^i_t,1  \leq i\leq N $satisfying the corresponding (DD)SDE.
\end{proof}

\subsection{Proof of Proposition \ref{R:prop:sta}}  
We choose the all same parameter as in Section \ref{sec:5choicepara}.

First,  we also replace $(v_{q},  \rho^i_{q},M^i_q)$ by a space-direction mollified field  
\begin{align}
v_l=v_{q}*\phi_l,\ \ 
   \rho^i_l=  \rho^i_{q}*\phi_l,\  M^i_l=M^i_{q}* \phi_l,\notag
\end{align}
where we recall that $\phi_l$ is a   standard radial mollifiers supported on $[-1,1]^d$.  
By calculation we obtain that 
\begin{align*}
-\div(\beta'(\rho^i _l+\rho^{st}) \nabla(\rho^i _l+\rho^{st}))+\div(v_l   (\rho^i _l+\rho^{st}))+\div(B(\rho^i _l+\rho^{st}))&=-\div (M^i_l+M^i_{com} ),\\
\div v_l&=0,
\end{align*}
where
\begin{align*}
M^i_{com} :=&-v_l  ( \rho^i_l+\rho^{st})+(v_{q}  ( \rho^i_{q}+\rho^{st}))* \phi_l\\
&+\beta'(\rho^i _l+\rho^{st}) \nabla(\rho^i _l+\rho^{st})-(\beta'(\rho^i _q+\rho^{st}) \nabla(\rho^i _q+\rho^{st}))*_x\phi_l\\
&-\beta'(\rho^{st}) \nabla\rho^{st}+(\beta'(\rho^{st}) \nabla\rho^{st})*_x\phi_l\\
 &  -B(\rho_l^i+\rho^{st})+B(\rho_q^i+\rho^{st})*_x\phi_l +B(\rho^{st})-B(\rho^{st})*_x\phi_l.\notag
\end{align*}
 Here we used the fact that $\rho^{st}$ is a stationary solution. Since $  \supp\rho_q^i,\supp v_q,\supp M_q^i\subset\Omega_q $ by \eqref{R:bd:supp}, we know that $  \supp\rho_l^i,\supp v_l,\supp M_l^i\subset\Omega_{q+1} $, and then $\supp M_{com}^i\subset\Omega_{q+1} $. 
 
Then, we define the perturbations
\begin{align*}
w_{q+1}^{(p,i)}:&=\sum_{n\geq3}\sum_{\xi\in\Lambda^{n,i}}\tilde{\chi}(\zeta |M^i_l|-n)\(\frac{n}{\zeta }\)^{1/d_0}W_{ (\xi)},\\
w_{q+1}^{(c,i)}:&=\sum_{n\geq3}\sum_{\xi\in\Lambda^{n,i}}\nabla\( \tilde{\chi}(\zeta |M^i_l|-n) \)\(\frac{n}{\zeta }\)^{1/{d_0}}:V_{ (\xi)},
\end{align*}
 and the total perturbation and new iteration  
\begin{align}
w_{q+1}:=\sum_{i=1}^N\(w_{q+1}^{(p,i)}+w_{q+1}^{(c,i)}\),\ \ v_{q+1}:=v_l+w_{q+1}.\notag
\end{align} 
Here we note that $W_{ (\xi)},V_{ (\xi)}$ are introduced in Section \ref{sec:5defq+1}, and are seen as a periodic function on $\mathbb{R}^d$.
Since $\supp M_l^i\subset\Omega_{q+1}$, we know that $\supp (w_{q+1}^{(p,i)}+w_{q+1}^{(c,i)})\subset\Omega_{q+1}$ and so is $v_{q+1}$. Here we note that the building blocks $W_{(\xi)}$ and $V_{(\xi)}$ are not globally integrable. However, since the amplitude functions are supported in $\Omega_{q+1}$, all estimates involving the building blocks are restricted to the interval $[-\frac12,\frac12]^d$. On this domain, the relevant norms coincide with those on the torus, and therefore the same estimates remain valid.

We define the perturbations for the stationary  densities functions as
\begin{align}
\theta_{q+1}^{(p,i)}:&=
\sum_{n\geq3}\sum_{\xi\in\Lambda^{n,i}}\chi(\zeta |M^i_l|-n)\(\frac{n}{\zeta }\)^{1/{d_0'}}\Gamma_{\xi}\(\frac{M^i_l}{|M^i_l|}\)\Theta_{ (\xi)},\notag\\
\theta_{q+1}^{(c,i)}:&=-\hat{\rho}^i_q\int_{\mathbb{R}^d} \theta_{q+1}^{(p,i)}\dif x.\notag
\end{align}
 and then define
\begin{align}
\theta _{q+1}^i:=\theta_{q+1}^{(p,i)}+\theta_{q+1}^{(c,i)},
\ \ 
  \rho^i_{q+1}:=  \rho^i_l+\theta^i _{q+1}.\notag
\end{align} 
Here we note that $\Theta_{ (\xi)}$ is seen as a periodic function on $\mathbb{R}^d$, $ \hat{\rho}^i_q$ is a probability density with support in $\Omega_{q}$ satisfying $\|\hat{\rho}^i_q\|_{C_{x}^2}\lesssim1.$    
Since $\supp M_l^i\subset\Omega_{q+1}$, we know that $\supp (\theta_{q+1}^{(p,i)}+\theta_{q+1}^{(c,i)})\subset\Omega_{q+1}$ and so is $\rho_{q+1}$. 
 By the definition, it is easy to see that $\int_{\mR^d}(\theta_{q+1}^{(p,i)}+\theta_{q+1}^{(c,i)})\dif x=0$, which implies that $\int_{\mR^d}\rho^{i}_{q+1}\dif x=0$.

Now, we need to  establish the corresponding estimates on the perturbations.  
  The definition of the perturbations are the same as before, except for $\theta_{q+1}^{(c,i)}$. However, it could be bounded as the same manner, since there is only an extra $\hat{\rho}_q$.  For example, 
\begin{align*}\|w_{q+1}^{(p,i)}&\|_{L^m(\mathbb{R}^d)}=\|w_{q+1}^{(p,i)}\|_{L^m([-\frac12,\frac12]^d)}\\
&\lesssim\sum_{n\geq3}\sum_{\xi\in\Lambda^{n,i}}\norm{\tilde{\chi}  (\zeta |M^i_l|-n) \(\frac{n}{\zeta }\)^{1/d_0}}_ {C_{x}^0([-\frac12,\frac12]^d)}\|W_{ (\xi)}\|_{L^m(\mathbb{T}^d)}\lesssim l^{-2d-4}r_\perp^{\frac{d-1}{m}-\frac{d-1}{d_0}},\\
\|w_{q+1}^{(c,i)}&\|_{L^m(\mathbb{R}^d)}=\|w_{q+1}^{(c,i)}\|_{L^m([-\frac12,\frac12]^d)}\\
&\lesssim\sum_{n\geq3}\sum_{\xi\in\Lambda^{n,i}}\norm{\tilde{\chi}(\zeta |M^i_l|-n)  \(\frac{n}{\zeta }\)^{1/d_0}}_{C^1_{x}([-\frac12,\frac12]^d)}\|V_{ (\xi)}\|_{L^m(\mathbb{T}^d)}\lesssim l^{-3d-8}r_\perp^{\frac{d-1}{m}-\frac{d-1}{d_0}}\lambda_{q+1}^{-1},
\end{align*}
which are the same as before. All estimates for the perturbations can be obtained by an almost word-for-word repetition of the arguments in Section~\ref{sec:proof1}: for $A_{i,n,\xi}:=\chi(\zeta |M^i_l|-n)\Gamma_{\xi}\(\frac{M^i_l}{|M^i_l|}\)\(\frac{n}{\zeta }\)^{1/d_0'}$,
\begin{align}
\|w_{q+1}^i\|_{C_{x}^1(\mathbb{R}^d)}&\lesssim  \sum_{n\geq3}\sum_{\xi\in\Lambda^{n,i}}\norm{\tilde{\chi}(\zeta |M^i_l|-n) \(\frac{n}{\zeta }\)^{1/d_0}}_{C_{x}^{2}([-\frac12,\frac12]^d)}\|  V_{ (\xi)}\|_{C_{x}^2(\mathbb{T}^d)} \lesssim  l^{-4d-12}\lambda_{q+1} r_\perp^{-\frac{d-1}{d_0}},\notag\\
\|\theta_{q+1}^{(p,i)} \|_{L^{m}(\mathbb{R}^d)}&\lesssim\sum_{n\geq3}\sum_{\xi\in\Lambda^{n,i}}\norm{A_{i,n,\xi}}_{C_{x}^0([-\frac12,\frac12]^d)}\|\Theta_{ (\xi)} \|_{L^{m}(\mathbb{T}^d)} \lesssim l^{-2d-4}r_\perp^{\frac{d-1}{m}-\frac{d-1}{d_0'}} ,\notag\\
\|\theta_{q+1}^{(p,i)}\|_{W^{1,1+\epsilon}(\mathbb{R}^d)}&\lesssim\sum_{n\geq3}\sum_{\xi\in\Lambda^{n,i}}\norm{A_{i,n,\xi}}_ {C_{x}^1([-\frac12,\frac12]^d)}\|\Theta_{ (\xi)} \|_{W^{1,1+\epsilon}(\mathbb{T}^d)} \lesssim l^{-4d-12}\lambda_{q+1}r_\perp^{\frac{d-1}{1+\epsilon}-\frac{d-1}{d_0'}}
 ,\notag\\
\|\theta_{q+1}^{(p,i)} \|_ {C_{x}^i(\mathbb{R}^d)} 
&\lesssim\sum_{n\geq3}\sum_{\xi\in\Lambda^{n,i}}\norm{A_{i,n,\xi}}_ {C_{x}^i([-\frac12,\frac12]^d)}\|\Theta_{ (\xi)} \|_ {C_{x}^i(\mathbb{T}^d)} \lesssim l^{-4d-12}\lambda_{q+1}^i r_\perp^{-\frac{d-1}{d_0'}},\ i=1,2.\notag
\end{align}
 Then the desired estimates on the perturbations are obtained by the same calculations.

 As for the stress term, by the same calculation as Section \ref{sec:5defmq+1}, we could  define 
$$-{M}^i _{q+1}:=M^i_{osc}+M^i_{nonlin}+M^i_{lin}-M^i_{com},$$
where 
\begin{align*}
M^i_{nonlin}: 
&=-  \beta'(\rho^i_{q+1}+\rho^{st} )  \nabla (\rho^i_{q+1}+\rho^{st} )+ \beta'(\rho^i_{l}+\rho^{st} )\nabla( \rho^i_{l}+\rho^{st}) \\
&\quad +Eb( \rho_{q+1}^i+\rho^{st})( \rho_{q+1}^i+\rho^{st})  -Eb(\rho_{l}^i+\rho^{st})(\rho_{l}^i+\rho^{st}) ,\\
M^i_{lin}:&=v_l\theta^i _{q+1}+w_{q+1}  (\rho^i_l+\rho^{st}+\theta_{q+1}^{(c,i)})+w_{q+1}^{(c,i)}\theta_{q+1}^{(p,i)},
\end{align*}  
and 

\begin{align*}
 \div M^i_{osc}&:=\div (w_{q+1}^{(p,i)}\theta_{q+1}^{(p,i)}-M^i_l)\\
 &=\sum_{n\geq3}\sum_{\xi\in\Lambda^{n,i}}\nabla\[\chi(\zeta |M^i_l|-n)\frac{n}{\zeta }\Gamma_{\xi}\(\frac{M^i_l}{|M^i_l|}\)\]\mP_{\neq0}(W_{ (\xi)}\Theta_{ (\xi)})\notag\\
 &\ \ +\div \(\sum_{n\geq3}\chi(\zeta |M^i_l|-n)\frac{n}{\zeta }\frac{M^i_l}{|M^i_l|}-M^i_l\). 
\end{align*} 
Using  the inverse divergence operator  $\mathcal{B}_{\hat{\rho}_q^i}$ in Proposition \ref{R:bb1_1}, we  define  the  oscillation error $M^i_{osc}:=M^i_{osc,x}+M^i_{osc,c}$  by
\begin{align*} M^i_{osc,x}:&=\sum_{n\geq3}\sum_{\xi\in\Lambda^{n,i}}\mathcal{B}_{\hat{\rho}_q^i}\(\nabla\[\chi(\zeta |M^i_l|-n)\frac{n}{\zeta }\Gamma_{\xi}\(\frac{M^i_l}{|M^i_l|}\)\],\mathbb{P}_{\neq0}(W_{ (\xi)}\Theta_{ (\xi)})\),\\
   M^i_{osc,c}:&=\sum_{n\geq3}\chi(\zeta |M^i_l|-n)\frac{n}{\zeta }\frac{M^i_l}{|M^i_l|}- M^i_l.
\end{align*}
Since $\supp \rho^i_l,\supp \theta_{q+1}^i,\supp M_l^i\subset\Omega_{q+1}$, we know that $\supp M_{q+1}^i\subset\Omega_{q+1}$.

We estimate the terms appearing in the definition of $M^i_{q+1}$ separately. For the oscillation error $M^i_{ {osc}}$, the operator $\mathcal{B}_{\hat{\rho}_q^i}$ satisfies the same estimates as in Theorem~\ref{bb1_1}, so the underlying calculations remain unchanged.

\begin{align}
\|M^i_{osc,x} \|_{L^1(\mathbb{R}^d)}
&\lesssim\sum_{n\geq3}\sum_{\xi\in\Lambda^{n,i}}\norm{\chi(\zeta |M^i_l|-n)\frac{n}{\zeta }\Gamma_{\xi}\(\frac{M^i_l}{|M^i_l|}\)}_{C_{x}^2([-\frac12,\frac12]^d)}(r_\perp\lambda_{q+1})^{-1}\|W_{ (\xi)}\Theta_{ (\xi)}\|_{L^{1}(\mathbb{T}^d)}\notag\\
&\lesssim l^{-6d-20}(r_\perp\lambda_{q+1})^{-1} 
\lesssim  \lambda_{q+1}^{-\alpha}\leq  \frac15C_0\delta_{q+2}.\notag
\end{align} 

The treatment of $M^i_{osc,c}$ and the linear error $M^i_{ {lin}}$ is identical as before.

For the nonlinear error $M^i_{nonlin} $, by the same calculation as in \eqref{arhorho-arhorho},  together with \eqref{R:rhostc2}  we have
\begin{align}
    \|M^i_{nonlin}\|_{L^1} 
      &\lesssim (\|\beta' \|_{C^1 }+\|E \|_{C_{x}^0([-\frac12,\frac12]^d)}\|b\|_{C^1})\|\theta^{i}_{q+1}\|_{W^{1,1+\epsilon}} (1+\|\rho^{i}_{l}+\rho^{st}\|_{C_{x}^1} ) \notag\\
      &\lesssim (\lambda_{q}^{ d+1}+1)\lambda_{q+1}^{-2\alpha} \lesssim    C_0 \lambda_{q+1}^{-\alpha}\leq \frac{1}{5}C_0 \delta_{q+2}.\notag
\end{align}

For the commutator error $M^i_{com} $, By the calculation as in \eqref{bd:4mcom}, together with \eqref{R:rhostc2}   we obtain
 \begin{align}
  \|M^i_{ com} \|_{L^1}  & \lesssim l\|v_q\|_{C_{x}^1}(\|\rho_q^i\|_{C_{x}^1}+\|\rho^{st}\|_{C_{x}^1})\notag\\
  &\quad+l (\|\beta' \|_{C^1}+\|E \|_{C_{x}^1([-\frac12,\frac12]^d)}\|b\|_{C^1})\notag\\
  &\quad\quad\quad\times (1+  \| \rho_q^i \|_{C_{x}^2}+\|\rho^{st}\|_{W^{2,\infty}}^2+\| \rho_q^i\|_{C_{x}^1}^2)(\| \rho_q^i \|_{C_{x}^1}+\|\rho^{st}\|_{W^{1,\infty}})\notag\\
 &\lesssim C_0 l\lambda_q^{3d+4}\lesssim  C_0  \lambda_{q+1}^{-\alpha}\leq \frac{1}{5}C_0 \delta_{q+2}.\notag 
   \end{align}    
Then the proof is finished.

\subsection{The main iteration used in the proof of Theorem~\ref{R:thm:4converge}}
Let $0<\epsilon_0\leq \min\{ \frac18, \frac1 {2^{d+10}} c_{in}\}$ be given.  The iteration is indexed by a parameter
$
q\in\mathbb{N}_0.
$
We consider     sequences of parameters
\begin{align*}
\lambda_q
=
a^{b^q},
\ 
q\geq0,
\ 
\delta_q
=
(\frac{\epsilon_0}{2})^{d+1}
\lambda_1^{2\beta}
\lambda_q^{-2\beta},
\ 
q\geq1,
\ 
\delta_0=1,
\end{align*}  by \eqref{ieq:4ab2} 
we obtain $
\sum_{q\geq1}\delta_q^{1/(d+1)}
<
\epsilon_0. $
 
At each step $q$, we construct a family
$(v_q,\rho_q^i,M_q^i)_{i\in\mathbb{N}}$
solving the system
\begin{align}\label{R:eq:4qth}
\partial_t\rho_q^i
-
\div \bigl(\beta'(\rho_q^i+\rho^{st})\nabla(\rho_q^i+\rho^{st})\bigr)
+
\div(v_q(\rho_q^i+\rho^{st}))+\div(B(\rho_q^i+\rho^{st}))
&=
-\div M_q^i,
\\
\div v_q&=0.
\notag
\end{align}

 We recall the notation $
T_q:=\frac13-\sum_{1\leq r\leq q}\delta_r^{1/2}
$ and initialize the iteration by defining
$
(v_0,\rho^i_0,M^i_0)
$
through
\begin{align*}
\rho^i_0(x)
&=
 4^{-i}\div F,
\ \
v_0(x)=0,
\\ 
M^i_0(x)
&=-4^{-i}\partial_t F+\beta'(\rho_0^i+\rho^{st})\nabla(\rho_0^i+\rho^{st})-\beta'(\rho^{st})\nabla\rho^{st}
-B(\rho_0^i+\rho^{st})+B( \rho^{st}),
\end{align*}
where
$  F(t,x)$ is a smooth non-divergence-free bounded $\mR^d$-valued function with support in $[\frac13,\frac23]\times \Omega_0$, and satisfying $\|\div F\|_{C_{t,x}^0}=\frac1 2 c_{in}, \|\div F\|_{C_tL^1}\geq \frac1 {2^{d+1}} c_{in}.$ In particular, $\supp\rho_0^i,\supp v_0,\supp M_0^i\subset [T_0,1-T_0]\times \Omega_0$. 
 
  By choosing $C_0>0$ sufficiently large we obtain  
\begin{align}
\| \rho_0^i\|_{C_{t,x}^2}
& 
\leq
4^{-i}C_0^{1/d_0'},\ \ \|v_0\|_{C_{t,x}^1}\leq C_0^{1/d_0},
\notag\\
\|M_0^i\|_{L_t^1L^1}
&\lesssim  (\|\beta' \|_{C^1 }+\|E \|_{C_{x}^1([-\frac12,\frac12]^d)}\|b\|_{C^1})\|\rho_0^i\|_{C_{t,x}^1}\lesssim 
4^{-i}
\leq
(\frac{\epsilon_0}{2})^{d+1}4^{-i}C_0.
\label{R:bd:4rho_0,M_0}
\end{align}

With the above assumptions in hand, our main iteration relies on the first step of iteration and reads as follows:

\bp\label{R:prop:case4} 
Under the assumption of Theorem \ref{R:thm:4converge}, there exist $d+1>d_0>2>d_0'>1$ with $\frac{1}{d_0}+\frac{1}{d_0'}=1$ and a choice of parameters $a,b,\alpha,\beta$  such
that the following holds: Let $(v_q,\rho^{i}_q,  M^{i}_q)_{i\in\mN}$ be a solution to the system \eqref{R:eq:4qth} satisfying   
\begin{align}\label{R:bd:4vql2}
\|v_q\|_{L^{d_0}_tL^{d_0}(\mathbb{R}^d)}\leq C_vC_0^{1/d_0}\sum_{m=0}^q\delta_{m}^{1/d_0},
\end{align}
for some universal constant $C_v\geq1$, and
\begin{align}
\|v_q\|_{C_{t,x}^1(\mathbb{R}^d)}\leq C_0^{1/d_0}\lambda_q^{d+4},\
4^{i} \|\rho^{i}_{q}\|_{C_{t,x}^1(\mathbb{R}^d)}+\lambda_q^{-2}\|\rho^{i}_{q}\|_{C_{t,x}^2(\mathbb{R}^d)}\leq C_0^{1/d_0'}\lambda_q^{d+4},
\label{R:bd:4rhoqc1}\\
 \|M^{i}_q\|_{L^1_tL^1(\mathbb{R}^d)}\leq C_02^{-i}\delta_{q+1},&\label{R:bd:4rql1}\\
 M^{i}_{q}=M^{i}_{0}-v_{q}\rho^{i}_0,\ \rho_q^{i}=\rho_0^{i}, \ {\rm for\ }  i>N_{q},\label{R:bd:4mqi>nq}\\
  \supp\rho_q^i,\supp v_q,\supp M_q^i\subset [T_q,1-T_q]\times \Omega_q.&\label{R:bd:supp4}
\end{align}
Then there exists 
$(v_{q+1},\rho^{i}_{q+1},  M^{i}_{q+1})_{i\in\mN}$ which solves \eqref{R:eq:4qth} and satisfies \eqref{R:bd:4vql2}-\eqref{R:bd:supp4} at the level $q+1$ and
\begin{align}
\|v_{q+1}-v_{q}\|_{L^{d_0}_tL^{d_0}(\mathbb{R}^d)}\leq
C_vC_0^{1/d_0}\delta_{q+1}^{1/d_0},\label{R:bd:4vq+1-vqldd}\ \
\|\rho^{i}_{q+1}-\rho^{i}_{q}\|_{L^{d_0'}_tL^{d_0'}(\mathbb{R}^d)}\leq
C_v C_0^{1/{d_0'}}2^{-i/d_0'}\delta_{q+1}^{1/{d_0'}},\\
\|v_{q+1}-v_{q}\|_{L^{r}_tL^{p}(\mathbb{R}^d)}+\|v_{q+1}-v_q\|_{C_tL^s(\mathbb{R}^d)}\leq
\delta_{q+1}^{1/d_0},\label{R:bd:4vq+1-vqlpr}\\
\|\rho^{i}_{q+1}-\rho^{i}_{q}\|_{C_tL^1(\mathbb{R}^d)}\leq
 4^{-i}\delta_{q+1}^{1/d_0'},\ \  \inf_{t\in [0,1]}(\rho^{i}_{q+1} - \rho^{i}_q)\geq - \delta_{q+1}^{1/d_0'} .\label{R:bd:4rhoq+1-rhoql1}
\end{align}
\ep

 \begin{proof}[Proof of   Theorem \ref{R:thm:4converge}]
We intend to start the iteration from 
$(v_0, \rho^{i}_0,  M^{i}_0)_{i\in\mN}$ which are defined as above. 
 Next, we use Proposition \ref{R:prop:case4} to build inductively $(v_q,\rho^{i}_q, M^{i}_q)_{i\in\mN}$ for every $q \geq 1$. By   \eqref{R:bd:4vq+1-vqldd}-\eqref{R:bd:4rhoq+1-rhoql1}, the sequence $\{(v_q,\rho^{i}_q)_{i\in\mN}\}_{q\in \N}$ is
Cauchy in 
$\(L^r([0,1];L^p)\cap L^{d_0}([0,1]\times \mathbb{R}^d)\cap C([0,1];L^s)\)\times \(L^{d_0'}([0,1]\times \mathbb{R}^d)\cap C([0,1];L^1)\)^{\mN}$ 
     and we denote by $(v,\rho^{i})$ its limit.    We then define $\overline \rho^i:=\rho^i+\rho^{st}$.

 By \eqref{R:bd:4vq+1-vqldd}-\eqref{R:bd:4rhoq+1-rhoql1} we have
\begin{align} 
  \|\rho^i-\rho_0^i\|_{C_tL^1}&\leq
4^{-i}\sum_{q=0}^\infty\delta_{q+1}^{1/{d_0'}}\leq 4^{-i} \frac{c_{in}}{2^{d+10}},\  \  \|v \|_{L^{r}_tL^{p}}+\|v  \|_{C_tL^s}\leq 
\sum_{q\geq0}\delta_{q+1}^{1/d_0}\leq\epsilon_0,\notag\\
\inf_{t\in [0,1]}\overline \rho^i &\geq \rho^{st}+ \inf_{t\in [0,1]} \rho_0^i +
\sum_{q=0}^\infty \inf_{t\in [0,1]}(\rho_{q+1}^i- \rho_q^i) \geq (c_{in}-\frac12c_{in}-\sum_{q=0}^\infty \delta_{q+1}^{1/d_0'})1_{[-\frac12,\frac12]}>0,\notag
\end{align}
at which point, together with the fact that $\|\rho^i_0-\rho_0^j\|_{C_tL^1}\geq |4^{-i}-4^{-j}|\cdot \frac{c_{in}}{2^{d+1}}$, we obtain $\overline\rho^i\to \rho^{st}$ in $ C_tL^1$ as $i\to\infty$,   
 $\overline\rho^{i}$ is a  probability density function, and $\overline\rho^i$
 do not coincide with each other. 
 We finish the proof of the first statement. 

 For the second statement,
Since $|v| \in L_{t}^{d_0 }L^{d_0 },\overline \rho^i \in L_{t}^{d_0'}L^{d_0'}$,  and 
$
t\mapsto \overline\rho^i(t)
$
is continuous on $[0,1]$, we finish the proof using the superposition principle.
 \end{proof}

\subsection{ Proof of Proposition \ref{R:prop:case4}} 
We choose the all same parameter as in Section \ref{sec:convex}.

First,  we mollify the first $N_{q+1}$ equations by\begin{align}
v_l=(v_{q }*_x\phi_l)*_t\varphi_l,\ \ 
 \rho_l^i=(\rho_{q}^i*_x\phi_l)*_t\varphi_l,\ \ M_l^i=(M_{q }^i*_x\phi_l)*_t\varphi_l,\notag
\end{align}
where 
$
\phi_l:=\frac1{l^d}\phi\bigl(\tfrac{\cdot}{l}\bigr)
$
is a family of standard radial mollifiers on $[-1,1]^d$, and  
$
\varphi_l:=\frac1l\varphi\bigl(\tfrac{\cdot}{l}\bigr)
$
is a family of standard mollifiers supported in $(0,1)$. By straightforward calculations  we obtain 
\begin{align}
\partial_t \rho_l^i-\div(\beta'(\rho_l^i+\rho^{st})\nabla(\rho_l^i+\rho^{st}))+\div(v_l (\rho_l^i+\rho^{st}))+\div(B(\rho_l^i+\rho^{st}))&=-\div (M^i_l+M^i_{com}),\notag\\ \div v_l&=0,\notag
\end{align} where
\begin{align*}
M^i_{com} :=&-v_l   (\rho^i_l+\rho^{st})+(v_{q}   (\rho^i_{q}+\rho^{st}))* \phi_l*_t\varphi_l\\
&+\beta'(\rho^i _l+\rho^{st}) \nabla(\rho^i _l+\rho^{st})-(\beta'(\rho^i _q+\rho^{st}) \nabla(\rho^i _q+\rho^{st}))*_x\phi_l*_t\varphi_l\\
&-\beta'(\rho^{st}) \nabla\rho^{st}+(\beta'(\rho^{st}) \nabla\rho^{st})*_x\phi_l*_t\varphi_l\\
 &  -B(\rho_l^i+\rho^{st})+B(\rho_q^i+\rho^{st})*_x\phi_l*_t\varphi_l +B(\rho^{st})-B(\rho^{st})*_x\phi_l*_t\varphi_l.\notag
\end{align*} 
  Since $  \supp\rho_q^i,\supp v_q,\supp M_q^i\subset[T_q,1-T_q]\times \Omega_q $, we know that $  \supp\rho_l^i,\supp v_l,\supp M_l^i\subset [T_{q+1},1-T_{q+1}]\times \Omega_{q+1} $, and then $\supp M_{com}^i\subset [T_{q+1},1-T_{q+1}]\times\Omega_{q+1} $.

We next define the perturbations for the drift term. 
For $1\leq i\leq N_{q+1}$, let
\begin{align*}
w_{q+1}^{(p,i)}
:=
\sum_{n\geq3}
&\tilde{\chi}(\zeta^i|M_l^i|-n)
\Bigl(\frac{n}{\zeta^i}\Bigr)^{1/d_0}
\sum_{\xi\in\Lambda^n}
W_{(\xi,n,i)}g_{(\xi,i,d_0)},\\
w_{q+1}^{(c,i)}
:=
\sum_{n\geq3}
\sum_{\xi\in\Lambda^n}
\Bigl(
&
-\tilde{\chi}(\zeta^i|M_l^i|-n)
\Bigl(\frac{n}{\zeta^i}\Bigr)^{1/d_0}
\frac1{(n_*\lambda_{q+1})^2}
\nabla\Phi_{(\xi,n,i)}
\,\xi\cdot\nabla\psi_{(\xi,n,i)}
\\
&\quad
+
\nabla\bigl(
\tilde{\chi}(\zeta^i|M_l^i|-n)
\bigr)
\Bigl(\frac{n}{\zeta^i}\Bigr)^{1/d_0}
:V_{(\xi,n,i)}
\Bigr)
g_{(\xi,i,d_0)},
\end{align*}
and the total perturbation, the new velocity field by
\begin{align}
w_{q+1}
:=
\sum_{i=1}^{N_{q+1}}
\bigl(
w_{q+1}^{(p,i)}
+
w_{q+1}^{(c,i)}
\bigr),
\qquad
v_{q+1}
:=
v_l+w_{q+1}.
\notag
\end{align}
Here we note that $W_{ (\xi,n,i)},V_{ (\xi,n,i)}$ are introduced in Section \ref{sec:4defq+1}, and are seen as a periodic function on $\mathbb{R}^d$.
Since $\supp M_l^i\subset[T_{q+1},1-T_{q+1}]\times\Omega_{q+1}$, we know that $\supp (w_{q+1}^{(p,i)}+w_{q+1}^{(c,i)})\subset[T_{q+1},1-T_{q+1}]\times\Omega_{q+1}$ and so is $v_{q+1}$.  
 
We next define the perturbations for the density functions. 
For $1\leq i\leq N_{q+1}$, we set
\begin{align}
\theta_{q+1}^{(p,i)}
:=
&\sum_{n\geq3}
\chi(\zeta^i|M_l^i|-n)
\Bigl(\frac{n}{\zeta^i}\Bigr)^{1/d_0'}
\sum_{\xi\in\Lambda^n}
\Gamma_\xi\Bigl(\frac{M_l^i}{|M_l^i|}\Bigr)
\Theta_{(\xi,n,i)}
g_{(\xi,i,d_0')},
\notag\\
\theta_{q+1}^{(c,i)}
:=
&-\hat{\rho}^i_q
\int_{\mathbb{R}^d}
\theta_{q+1}^{(p,i)}\,\dif x,
\notag\\
\theta_{q+1}^{(o,i)}
:=
&-
\sigma^{-1}
\sum_{n\geq3}
\sum_{\xi\in\Lambda^n}
h_{(\xi,i,d_0)}
\div\Bigl(
\chi(\zeta^i|M_l^i|-n)
\frac{n}{\zeta^i}
\Gamma_\xi\Bigl(\frac{M_l^i}{|M_l^i|}\Bigr)
\xi
\Bigr).
\notag
\end{align}
We now define, for every $1\leq i\leq N_{q+1}$,
\begin{align}
\theta_{q+1}^i
:=
\theta_{q+1}^{(p,i)}
+
\theta_{q+1}^{(c,i)}
+
\theta_{q+1}^{(o,i)},
\qquad
\rho_{q+1}^i
:=
\rho_l^i+\theta_{q+1}^i.
\notag
\end{align}
For $i>N_{q+1}$, we simply set
$
\rho_{q+1}^i:=\rho_q^i.
$

By construction,
$
\int_{\mathbb{R}^d}\rho_{q+1}^i\,\dif x=0
$
for every $i\in\mathbb{N}$.
Since $\supp M_l^i\subset [T_{q+1},1-T_{q+1}]\times\Omega_{q+1}$, we know that $\supp (\theta_{q+1}^{(p,i)}+\theta_{q+1}^{(c,i)})\subset[T_{q+1},1-T_{q+1}]\times\Omega_{q+1}$ and so is $\rho_{q+1}$.

Now we   establish the corresponding estimates on the perturbations.  
  The definition of the perturbations are also  the same as before, except for $\theta_{q+1}^{(c,i)}$.  
 Here we note that since $\supp_x M_l^i\subset [-\frac12,\frac12]^d$, all estimates  are restricted to the domain $ [-\frac12,\frac12]^d$.
We now collect several key estimates   in the sequel:
  \begin{align*}
\|w_{q+1}^{(p,i)}\|_{L^u_tL^m(\mathbb{R}^d)}
&\lesssim l^{-2d-4}r_\perp^{\frac{d-1}{m}-\frac{d-1}{d_0}} r_\parallel^{\frac{1}{m}-\frac{1}{d_0}}\eta^{\frac1u-\frac1{d_0}}, \\
\|w_{q+1}^{(c,i)}\|_{L^u_tL^m(\mathbb{R}^d)}
&\lesssim l^{-3d-8}r_\perp^{\frac{d-1}{m}-\frac{d-1}{d_0}} r_\parallel^{\frac{1}{m}-\frac{1}{d_0}}\frac{r_\perp}{ r_\parallel}\eta^{\frac1u-\frac1{d_0}},\\
\|\theta_{q+1}^{(p,i)} \|_{L^u_tL^{m}(\mathbb{R}^d)}
&\lesssim l^{-2d-4}r_\perp^{\frac{d-1}{m}-\frac{d-1}{d_0'}} r_\parallel^{\frac{1}{m}-\frac{1}{d_0'}}\eta^{\frac1u-\frac{1}{d_0'}},\\
\|\theta_{q+1}^{(o,i)}\|_{C_tC^1(\mathbb{R}^d)} 
&\lesssim \sigma^{-1} l^{-6d-20},\\
\|\theta_{q+1}^{(p,i)}\|_{L^1_tW^{1,1+\epsilon}(\mathbb{R}^d)}
&\lesssim l^{-4d-12}\lambda_{q+1}r_\parallel^{\frac1{1+\epsilon}-\frac{1}{d_0'}}r_\perp^{\frac{d-1}{1+\epsilon}-\frac{d-1}{d_0'}}\eta^{1-\frac1{d_0'}}.
\end{align*}

 Then the desired estimates on the perturbations are obtained by the same calculations.

 As for the stress term, by the same calculation as Section \ref{sec:4defmq+1}, we could  define  for $1\leq i\leq N_{q+1}$,
$$-{M}^i _{q+1}:=M^i_{osc}+M^i_{nonlin}+M^i_{lin}-M^i_{com},$$
where \begin{align*} 
M^{i}_{nonlin}:&= -  \beta'(\rho^{i}_{q+1}+\rho^{st}) \nabla(\rho^{i}_{q+1}+\rho^{st})+\beta'(\rho^{i}_{l}+\rho^{st})\nabla (\rho^{i}_{l}+\rho^{st} )\\
&\quad-Eb( \rho_{q+1}^i+\rho^{st})(\rho_{q+1}^i+\rho^{st})+Eb(\rho_{l}^i+\rho^{st})(\rho_{l}^i+\rho^{st}),\\
M^{i}_{lin}:&=v_l\theta^{i}_{q+1}+w_{q+1}(\rho^{i}_l+\rho^{st}+\theta_{q+1}^{(c,i)}+\theta_{q+1}^{(o,i)}) +w_{q+1}^{(c,i)}\theta_{q+1}^{(p,i)}.
\end{align*}
 To define the oscillation error,  we define
$\mathbb{P}_{\neq 0,\hat{\rho}^i_q}f:=f- \hat{\rho}^i_q\cdot\int_{\mathbb{R}^d}f\dif x$.  
In particular, for any mean-zero function $f$, we have 
 $\mathbb{P}_{\neq 0,\hat{\rho}^i_q}f=f$. Then, we have 
\begin{align}
M^{i}_{osc}:= &\partial_t\theta^{i}_{q+1}+\div (w_{q+1}^{(p,i)}\theta_{q+1}^{(p,i)}-M^{i}_l)\notag\\
 =&\mathbb{P}_{\neq 0,\hat{\rho}^i_q} (\partial_t\theta^{(p,i)}_{q+1}+\div (w_{q+1}^{(p,i)}\theta_{q+1}^{(p,i)}-M^{i}_l)+\partial_t\theta^{(o,i)}_{q+1}
 )\notag\\ 
=
&
\sum_{n\geq3}
\sum_{\xi\in\Lambda^n}
\mathbb{P}_{\neq 0,\hat{\rho}^i_q}\left(\partial_t
\Bigl[
\chi(\zeta^i|M_l^i|-n)
\Bigl(\frac{n}{\zeta^i}\Bigr)^{1/d_0'}
\Gamma_\xi\Bigl(\frac{M_l^i}{|M_l^i|}\Bigr)
g_{(\xi,i,d_0')}
\Bigr]
\Theta_{(\xi,n,i)}\right)
\notag\\
&+
\sum_{n\geq3}
\sum_{\xi\in\Lambda^n}
\mathbb{P}_{\neq 0,\hat{\rho}^i_q}\left(\nabla
\Bigl[
\chi(\zeta^i|M_l^i|-n)
\frac{n}{\zeta^i}
\Gamma_\xi\Bigl(\frac{M_l^i}{|M_l^i|}\Bigr)
\Bigr]
g_{(\xi,i,d_0)}
g_{(\xi,i,d_0')}
\mathbb{P}_{\neq0}
\bigl(
W_{(\xi,n,i)}
\Theta_{(\xi,n,i)}
\bigr)
\right)\notag\\
&+
\div
\Bigl(
\sum_{n\geq3}
\chi(\zeta^i|M_l^i|-n)
\frac{n}{\zeta^i}
\frac{M_l^i}{|M_l^i|}
-
M_l^i
\Bigr)
\notag\\
&-
\sigma^{-1}
\sum_{n\geq3}
\sum_{\xi\in\Lambda^n}
h_{(\xi,i,d_0)}
\partial_t
\div
\Bigl(
\chi(\zeta^i|M_l^i|-n)
\frac{n}{\zeta^i}
\Gamma_\xi\Bigl(\frac{M_l^i}{|M_l^i|}\Bigr)
\xi
\Bigr).\notag
\end{align}
Now, we  apply the  Bogovskii  operators $\mathcal{B}_{\hat{\rho}_q^i},\mathcal{R}_{\hat{\rho}_q^i}$ in Section \ref{R:tamr} to   define $M^{i}_{osc}:=M^{i}_{osc,t}+M^{i}_{osc,x}+M^{i}_{osc,c}+M^{i}_{osc,o}$ as
\begin{align*}
   M^{i}_{osc,t}&:=\sum_{n\geq3}\sum_{\xi\in\Lambda^{n}}\mathcal{B}_{\hat{\rho}_q^i}\(\partial_t[\chi(\zeta^{i}|M^{i}_l|-n)\(\frac{n}{\zeta^{i}}\)^{1/d_0'}\Gamma_{\xi}\(\frac{M^{i}_l}{|M^{i}_l|}\)g_{(\xi,i,d_0')}]\Theta_{(\xi,n,i)}\),\\
M^{i}_{osc,x}&:=\sum_{n\geq3}\sum_{\xi\in\Lambda^{n}}\mathcal{R}_{\hat{\rho}_q^i}\(\nabla[\chi(\zeta^{i}|M^{i}_l|-n)\frac{n}{\zeta^{i}}\Gamma_{\xi}\(\frac{M^{i}_l}{|M^{i}_l|}\)],\mathbb{P}_{\neq0}(W_{(\xi,n,i)}\Theta_{(\xi,n,i)})\)g_{(\xi,i,d_0)}g_{(\xi,i,d_0')},\\
   M^{i}_{osc,c}&:=\sum_{n\geq3}\chi(\zeta^{i}|M^{i}_l|-n)\frac{n}{\zeta^{i}}\frac{M^{i}_l}{|M^{i}_l|}- M^{i}_l,\\
    M^{i}_{osc,o}&:=-\sigma^{-1}\sum_{n\geq3}\sum_{\xi\in\Lambda^{n}}h_{(\xi,i,d_0)}\partial_t\(\chi(\zeta|M_l^i|-n)\frac{n}{\zeta^i}\Gamma_{\xi}\(\frac{M_l^i}{|M_l^i|}\)\xi\).
\end{align*}
Since $\supp \rho^i_l,\supp \theta_{q+1}^i,\supp M_l^i\subset[T_{q+1},1-T_{q+1}]\times\Omega_{q+1}$, we know that $\supp M_{q+1}^i\subset[T_{q+1},1-T_{q+1}]\times\Omega_{q+1}$.

 We now estimate each term in the definition of $M_{q+1}^i$ separately  for $1\leq i\leq N_{q+1}$.

For the oscillation error $M^{i}_{osc,t}$,  the operator $\mathcal{R}_{\hat{\rho}_q^i}$ satisfies the same estimates as in Theorem \ref{bb1_1}, so the underlying calculations remain unchanged.  
\begin{align}
    \|M^{i}_{osc,t}\|_{L^1_tL^1}
    &\lesssim \sum_{n\geq3}\sum_{\xi\in\Lambda^{n}}\norm{\chi(\zeta^{i}|M^{i}_l|-n)\(\frac{n}{\zeta^{i}}\)^{1/d_0'}\Gamma_{\xi}\(\frac{M^{i}_l}{|M^{i}_l|}\)}_{C_{t,x}^1}\|g_{(\xi,i,d_0')}\|_{W_t^{1,1}} \|\Theta_{(\xi,n,i)}\|_{C_tL^1}\notag\\
    &\lesssim l^{-4d-12}r_\perp^{d-1-\frac{d-1}{d_0'}}r_\parallel^{1-\frac{1}{d_0'}}\sigma \eta^{-\frac{1}{d_0'}} \lesssim2^{-i} \lambda_{q+1}^{-\alpha}.\notag
\end{align}

 By \eqref{bd:gwnp} and the bounds above    we have
\begin{align}
   \|  M^{i}_{osc,o}\|_{L^1_tL^1}&\lesssim \sigma^{-1}\sum_{n\geq3}\sum_{\xi\in\Lambda^{n}}\norm{  \chi(\zeta|M_l^i|-n)\frac{n}{\zeta^i}\Gamma_{\xi}\(\frac{M^{i}_l}{|M^{i}_l|}\)\xi}_{C_{t,x}^1} \lesssim  \sigma^{-1}l^{-4d-12}
   \lesssim 2^{-i}\lambda_{q+1}^{-\alpha}.\notag
\end{align} 
 
For the nonlinear error $M_{nonlin}^i$,  similar to \eqref{arhorho-arhorho}, we have
\begin{align}
    \|M^{i}_{nonlin}\|_{L^1_tL^1} 
      &\lesssim (\|\beta' \|_{C^1 }+\|E \|_{C_{x}^0([-\frac12,\frac12]^d)}\|b\|_{C^1})\notag\\
       &\quad\quad\times ( \|\theta^{i}_{q+1}\|_{C_tL^{1+\epsilon}}+ \|\theta^{i}_{q+1}\|_{L^1_tW^{1,1+\epsilon}})  (1+\|\rho^{i}_{q}+\rho^{st}\|_{C_{t,x}^1})  \notag\\
      &\lesssim( \lambda_{q}^{d+4}+1)2^{-i}\lambda_{q+1}^{-2\alpha}\lesssim  2^{-i} C_0 \lambda_{q+1}^{-\alpha}\leq \frac{1}{5}C_02^{-i}\delta_{q+2}.\notag
\end{align}

For the commutator error $M^i_{com} $,  we obtain
 \begin{align}
  \|M^i_{ com} \|_{L_t^1L^1}  & \lesssim l\|v_q\|_{C_{x}^1}(\|\rho_q^i\|_{C_{t,x}^1}+\|\rho^{st}\|_{C_{x}^1})\notag\\
  &\quad+l (\|\beta' \|_{C^1}+\|E \|_{C_{t,x}^1([-\frac12,\frac12]^d)}\|b\|_{C^1})\notag\\
  &\quad\quad\quad\times (1+  \| \rho_q^i \|_{C_{t,x}^2}+\| \rho_q^i\|_{C_{t,x}^1}^2+\|\rho^{st}\|_{W^{2,\infty}}^2)(\| \rho_q^i \|_{C_{t,x}^1}+\|\rho^{st}\|_{W^{1,\infty}})\notag\\
 &\lesssim  C_0 l\lambda_q^{3d+12}\lesssim  2^{-i}C_0  \lambda_{q+1}^{-\alpha/2}\leq \frac{1}{5}C_02^{-i}\delta_{q+2}.\notag 
   \end{align}

For the case $i>N_{q+1}$, since $\supp_x v_{q+1},\supp_x v_q\subset [-\frac12,\frac12]^d$ and $\rho^{st}=g^{-1}(-\Phi(x)+\mu^{st})$ is constant on $[-\frac12,\frac12]^d$, we obtain that
$
\div((v_{q+1}-v_q) \rho^{st})=0.$ Together with $\rho_{q+1}^i=\rho_q^i=\rho_0^i,
$
we   define
\begin{align}
M_{q+1}^i
:=
M_q^i-(v_{q+1}-v_q)\rho_0^i
=
M_0^i-v_{q+1}\rho_0^i.
\notag
\end{align}
Since $\supp \rho^i_0,\supp M_0^i\subset[T_{q+1},1-T_{q+1}]\times\Omega_{q+1}$, we know that $\supp M_{q+1}^i\subset[T_{q+1},1-T_{q+1}]\times\Omega_{q+1}$.

In the end,  by \eqref{R:bd:4rho_0,M_0}  and \eqref{R:bd:4vql2} we have  
\begin{align*}
    \|M^{i}_{q+1}\|_{L^1_tL^1}&\leq \|M^{i}_{0}\|_{L^1_tL^1}+\|v_{q+1}\|_{L^{d_0}_tL^{d_0}}\|\rho^{i}_0\|_{L^{d_0'}_tL^{d_0'}} \lesssim C_0 C_v 4^{-i}  \leq C_0 2^{-i}\delta_{q+2}.
\end{align*}

\noindent{\bf Acknowledgments.}
The author is deeply grateful to Prof.~Michael R\"ockner and
Prof.~Xiangchan Zhu for their invaluable discussions, insightful comments, and
generous support. Their suggestions have greatly improved both the mathematical
content and the presentation of this work.

\appendix
 \renewcommand{\appendixname}{Appendix~\Alph{section}}
  \renewcommand{\theequation}{A.\arabic{equation}}

\section{$L^{d_0}$-based  intermittent spatial-time jets}\label{gij}

In this section we recall the $L^{d_0}$-based intermittent spatial-time jets as  presented in  \cite[Appendix C.1]{LRZ25}. 

First we introduce  the following geometrical lemma:
\bl\cite[Lemma 3.1]{BCDL21}\label{lem:cv2}
Let $d\geq2$. There exists a finite set $\Lambda\in \mathbb{S}^{d-1}\cap \mathbb{Q}^d $ and non-negative $C^\infty$-function  $\Gamma_{\xi}:\mathbb{S}^{d-1}\to\R $ such
that for every $R \in \mathbb{S}^{d-1}$
$$R=\sum_{\xi\in\Lambda}\Gamma_\xi(R)\xi.$$
\el
 With Lemma \ref{lem:cv2} in hand,   it is easy to generate $2$ disjoint families $\Lambda^{1},\Lambda^{2}$, where each one enjoys the property of Lemma \ref{lem:cv2} by taking suitable rational rotations of one fixed set. For simplicity, we denote $\Lambda:= \Lambda^{1}\cup\Lambda^{2}$. Moreover, we know that  $\{\Gamma_{\xi}\}_{\xi\in \Lambda }$ are uniformly bounded.

For parameters $\lambda,r_\perp, r_\parallel > 0 $, we assume
$$\lambda^{-1}\ll r_{\perp}\ll r_{\parallel}\ll 1,\ \ \lambda r_{\perp}\in\mathbb{N}.$$

For each $\xi\in\Lambda$ let us define $A^i_\xi\in \mathbb{S}^{d-1}\cap \mathbb{Q}^d,\ i=1,2,...,d-1$, such that $\{\xi, A^i_\xi,i=1,...,d-1\}$
form an orthonormal basis in $\mathbb{R}^d$. 
 Let  $n_*\in\mN$ such that$\{n_*\xi, n_*A^i_\xi,i=1,...,d-1\}\subset\mathbb{Z}^d$
for every $\xi\in\Lambda$. 
We define $\phi : \mathbb{R}^{d-1} \to \mathbb{R}$ be a smooth function with support in a ball of radius 1, $\phi\equiv 1$ on $B(0,\frac13)$ and mean-zero. We define $\Phi$ such that $\phi = -\Delta\Phi$. 
Let $\psi : \mathbb{R}\to\mathbb{R}$  be a smooth, mean-zero  function with
support in $B(0,1)$ satisfying $\psi\equiv 1$ on $B(0,\frac13)$. 
Define $\phi' : \mathbb{R}^{d-1} \to \mathbb{R}$ to be a smooth non-negative   function with support in $B(0,\frac13)$ satisfying $\int_{\mathbb{R}^{d-1}}\phi'(x_1,x_2,...,x_{d-1})\dif x_1\dif x_2..\dif x_{d-1}=1,$
and let $\psi' : \mathbb{R} \to \mathbb{R}$ be a smooth non-negative   function with support in $B(0,\frac13)$ such that
$\int_\mathbb{R}\psi'(x_d)\dif x_d=1.$

Let  $d_0\geq1$ be fixed, we define the rescaled cut-off functions
$$\phi _{r_{\perp},{d_0}}(x_1,x_2,...,x_{d-1})=\frac1{r_{\perp}^{(d-1)/{d_0}}}\phi (\frac{x_1}{r_{\perp}},\frac{x_2}{r_{\perp}},...,\frac{x_{d-1}}{r_{\perp}}),$$
$$\Phi _{r_{\perp},{d_0}}(x_1,x_2,...,x_{d-1})=\frac1{r_{\perp}^{(d-1)/{d_0}}}\Phi (\frac{x_1}{r_{\perp}},\frac{x_2}{r_{\perp}},...,\frac{x_{d-1}}{r_{\perp}}),$$
$$\psi _{r_{\parallel},{d_0}}(x_d)=\frac1{r_{\parallel}^{1/{d_0}}}\psi (\frac{x_d}{r_{\parallel}}).$$
Similarly,  for a conjugate exponent $d_0'\in[1,\infty]$, we define $\phi '_{r_{\perp},d_0'}$ and $\psi'_{r_{\parallel},d_0'}$ as the same manner. Then we periodize $\phi _{r_{\perp},v},\Phi _{r_{\perp},{d_0}},$ $\psi _{r_{\parallel},{d_0}}$, $\phi '_{r_{\perp},d_0'}$ and $\psi'_{r_{\parallel},{d_0}'}$ so that they can be viewed as functions on $\mathbb{T}^{d-1}$ and $\mathbb{T}$ respectively.
Consider a large time oscillation parameter $  \mu=r_\perp^{-\frac{d-1}{d_0}}r_\parallel^{-\frac1{d_0}}> 0$. For every $\xi\in\Lambda$ we introduce
$$\psi _{(\xi,d_0)}(t,x):=
\psi _{r_{\parallel},d_0}(n_*r_{\perp}\lambda(x\cdot \xi-\mu t)),$$
$$\Phi _{(\xi,d_0)}(x):=
\Phi _{r_{\perp},d_0}(n_*r_{\perp}\lambda x\cdot A^1_{\xi},...,n_*r_{\perp}\lambda x\cdot A^{d-1}_{\xi}),$$
$$\phi _{(\xi,d_0)}(x):=
\phi _{r_{\perp},d_0}(n_*r_{\perp}\lambda x\cdot A^1_{\xi},...,n_*r_{\perp}\lambda x\cdot A^{d-1}_{\xi}).$$
Similarly  we define  $\phi '_{(\xi,d_0')}$  and $\psi'_{(\xi,d_0')}$. 

The building blocks $W_{(\xi,d_0)} :\mathbb{R} \times\mathbb{T}^d \to \mathbb{R}^d,\Theta_{(\xi,d_0')} :\mathbb{R} \times\mathbb{T}^d \to \mathbb{R}$ are defined as
$$W_{(\xi,d_0)}(t,x):=\xi\psi_{(\xi,d_0)}(t,x)\phi_{(\xi,d_0)}(x),$$
$$\Theta_{(\xi,d_0')}(t,x):=\psi'_{(\xi,d_0')}(t,x)\phi'_{(\xi,d_0')}(x).$$
By the definition and the choice of $\mu$ we have that 
\begin{align}
\int _{\mathbb{T}^d}W_{(\xi,d_0)}\Theta_{(\xi,d_0')}\dif x=\xi,\label{eq:intwthe}\\
\partial_t\Theta_{(\xi,d_0')}+ \div (W_{(\xi,d_0)}\Theta_{(\xi,d_0')})=0.\label{eq:ptthe+}
\end{align}

Since $W_{(\xi,d_0)}$ is not divergence-free, we
introduce the skew-symmetric corrector term
\begin{align}
V_{(\xi,d_0)}:=\frac{1}{(n_*\lambda)^2}(\xi\otimes\nabla\Phi_{(\xi,d_0)}-\nabla\Phi_{(\xi,d_0)}\otimes\xi)\psi_{(\xi,d_0)}\notag
\end{align}
satisfying
\begin{align}
\div V_{(\xi,d_0)}
=W_{(\xi,d_0)} -\frac{1}{(n_*\lambda)^2}\nabla\Phi_{(\xi,d_0)}\xi\cdot\nabla\psi_{(\xi,d_0)}.\label{divOmega}
\end{align}
Finally, we obtain that for $N, M \geq 0$ and $p\in [1, \infty]$ the
following holds
\begin{align}
\|\nabla^N\partial_t^M\psi_{(\xi,d_0)}\|_{C_tL^p }\lesssim r_\parallel^{\frac{1}{p}-\frac{1}{d_0}}(\frac{r_\perp\lambda}{r_\parallel})^N(\frac{r_\perp\lambda\mu}{r_\parallel})^M,
&\label{int2}\\
\|\nabla^N\phi_{(\xi,d_0)}\|_{L^p }+\|\nabla^N\Phi_{(\xi,d_0)}\|_{L^p }\lesssim r_\perp^{\frac{d-1}{p}-\frac{d-1}{d_0}}\lambda^{N},
&\label{int3}\\
\|\nabla^N\partial_t^MW_{(\xi,d_0)}\|_{C_tL^p }+\lambda\|\nabla^N\partial_t^MV_{(\xi,d_0)}\|_{C_tL^p }
\lesssim r_\perp^{\frac{d-1}{p}-\frac{d-1}{d_0}}r_\parallel^{\frac{1}{p}-\frac{1}{d_0}}\lambda^{N}(\frac{r_\perp\lambda\mu}{r_\parallel})^M,&\label{int4}\\ \|\nabla^N\partial_t^M\Theta_{(\xi,d_0')}\|_{C_tL^p }\lesssim r_\perp^{\frac{d-1}{p}-\frac{d-1}{d_0'}}r_\parallel^{\frac{1}{p}-\frac{1}{d_0'}}\lambda^{N}(\frac{r_\perp\lambda\mu}{r_\parallel})^M,& \label{int4theta}
\end{align}
where the implicit constants may depend on $p,N$ and $M$, but are independent of $\lambda,r_\perp,r_\parallel,\mu$. These estimates can be easily deduced from the definitions.

Then let us  introduce a family of temporal functions   to oscillate the building blocks intermittently in time.
Let  $K\in\mN$ be fixed, and $G \in C_c^\infty(0, 1)$ be non-negative and 
$ \int_0^1G^2(t)\dif t=1.$
  Let $\eta >0$  be a small constant satisfying $ \eta K\ll1$. For $\xi\in\Lambda$ as defined above,  and $1\le i\le K,$ we define $\tilde{g}_{(\xi,i,d_0)}: \mathbb{T}\to \mathbb{R}$ as the 1-periodic extension of $\eta^{-1/d_0}G(\frac {t-t_{(\xi,i)}}{\eta})$,  where $t_{(\xi,i)}$ are chosen so that $\tilde{g}_{(\xi,i,d_0)}$ have disjoint supports for distinct $(\xi,i)$.  
We will also oscillate the perturbations at a large
frequency $\sigma\in\mathbb{N}$. So, we define
$$g_{(\xi,i,d_0)}(t)=\tilde{g}_{(\xi,i,d_0)}(\sigma t).$$
Similarly we define $\tilde{g}_{(\xi,i,d_0')} $ and $g_{(\xi,i,d_0')} $.

For the corrector term we define $H_{(\xi,i,d_0)},h_{(\xi,i,d_0)}:\mathbb{T}\to\mathbb{R}$ by
\begin{align}
H_{(\xi,i,d_0)}(t)=\int_0^{t}g_{(\xi,i,d_0)}(s)\dif s,\ \ h_{(\xi,i,d_0)}(t)=\int_0^{\sigma t}(\tilde{g}_{(\xi,i,d_0)}(s)\tilde{g}_{(\xi,i,d_0')}(s)-1)\dif s, \label{eq:parth}
\end{align}
where we recall that $\frac1{d_0'}+\frac1{d_0}=1$, In view of the zero-mean condition for $\tilde{g}_{(\xi,i,d_0)}(t)\tilde{g}_{(\xi,i,d_0')}(t)-1$, we see that $h_{(\xi,i,d_0)}$ is $\mathbb{T}/\sigma$-periodic, and for any $N\geq0,p\geq1$
\begin{align}
    \|g_{(\xi,i,d_0)}\|_{W^{N,p}}\lesssim(\frac{\sigma}{\eta})^N\eta^{1/p-1/d_0},\  \  \|h_{(\xi,i,d_0)}\|_{L^\infty}\leq1,\label{bd:gwnp}
\end{align}
where the universal constant is independent of the choices of  $i$ and $\xi$.

\end{document}